\documentclass[10pt,reqno]{amsart}
\usepackage{amssymb,mathrsfs,graphicx,extpfeil,amsmath}
\usepackage{ifthen,latexsym,float,colortbl}
\usepackage{hyperref}
\usepackage[margin=1in]{geometry}
\usepackage{caption}
\usepackage{sidecap}
\usepackage{rotating,dsfont,stackengine}
\usepackage{amsthm}

\usepackage{colortbl}
\definecolor{black}{rgb}{0.0, 0.0, 0.0}
\definecolor{red}{rgb}{1.0, 0.5, 0.5}
\provideboolean{shownotes} 
\setboolean{shownotes}{true} 
%
\newcommand{\margnote}[1]{
	\ifthenelse{\boolean{shownotes}}%
	{\marginpar{\raggedright\tiny\texttt{#1}}}%
	{}%
}
\newcommand{\hole}[1]{
	\ifthenelse{\boolean{shownotes}}%
	{\begin{center} \fbox{ \rule {.25cm}{0cm} \rule[-.1cm]{0cm}{.4cm}
				\parbox{.85\textwidth}{\begin{center} \texttt{#1}\end{center}} \rule
				{.25cm}{0cm}}\end{center}} {} }

\graphicspath{{pics/}}


\title[Damped Euler system with attractive Riesz interaction forces]{Damped Euler system with attractive Riesz interaction forces}

\author[Choi]{Young-Pil Choi}
\address[Young-Pil Choi]{\newline Department of Mathematics \newline
	Yonsei University, 50 Yonsei-Ro, Seodaemun-Gu, Seoul 03722, Republic of Korea}
\email{ypchoi@yonsei.ac.kr}

\author[Jung]{Jinwook Jung}
\address[Jinwook Jung]{\newline Department of Mathematics and Institute of Pure and Applied Mathematics \newline Jeonbuk National University, 567 Baekje-daero, Deokjin-gu, Jeonju-si, Jeollabuk-do 54896,  Republic of Korea}
\email{2jwook12@gmail.com}

\author[Lee]{Yoonjung Lee}
\address[Yoonjung Lee]{\newline Department of Mathematics \newline
	Yonsei University, 50 Yonsei-Ro, Seodaemun-Gu, Seoul 03722, Republic of Korea}
\email{yjglee@yonsei.ac.kr}

\numberwithin{equation}{section}

\newtheorem{theorem}{Theorem}[section]
\newtheorem{lemma}{Lemma}[section]

\newtheorem{proposition}{Proposition}[section]
\newtheorem{remark}{Remark}[section]

\newcommand{\R}{\mathbb R}

\newcommand{\N}{\mathbb N}
\newcommand{\Z}{\mathbb Z}

\newcommand{\ls}{\lesssim}
\newcommand{\gs}{\gtrsim}
\newcommand{\T}{\mathbb T}

\newcommand{\mc}{\mathcal C}

\newcommand{\sfI}{\mathsf{I}}
\newcommand{\sfJ}{\mathsf{J}}
\newcommand{\sfK}{\mathsf{K}}
\newcommand{\sfL}{\mathsf{L}}

\newcommand{\bq}{\begin{equation}}
	\newcommand{\eq}{\end{equation}}
\newcommand{\e}{\varepsilon}
\newcommand{\lt}{\left}
\newcommand{\rt}{\right}

\newcommand{\pa}{\partial}

\makeatletter
\def\moverlay{\mathpalette\mov@rlay}
\def\mov@rlay#1#2{\leavevmode\vtop{%
		\baselineskip\z@skip \lineskiplimit-\maxdimen
		\ialign{\hfil$\m@th#1##$\hfil\cr#2\crcr}}}
\newcommand{\charfusion}[3][\mathord]{
	#1{\ifx#1\mathop\vphantom{#2}\fi
		\mathpalette\mov@rlay{#2\cr#3}
	}
	\ifx#1\mathop\expandafter\displaylimits\fi}
\makeatother

\begin{document}
\allowdisplaybreaks


\subjclass[2020]{Primary: 76N10; Secondary: 35Q31}

\keywords{Euler--Riesz system, attractive Riesz interactions, global existence, large-time behavior.}

\begin{abstract} We consider the barotropic Euler equations with pairwise attractive Riesz interactions and linear velocity damping in the periodic domain. We establish the global-in-time well-posedness theory for the system near an equilibrium state. We also analyze the large-time behavior of solutions showing the exponential rate of convergence toward the equilibrium state as time goes to infinity. 
\end{abstract}
\maketitle 


%
%
%
%
\section{Introduction}
In this paper, we analyze the barotropic Euler equations with pairwise attractive Riesz interactions and linear damping in velocity. More precisely, let $\rho=\rho(x,t)$ and $u= u(x,t)$ be the density and velocity of the fluid at time $t \in \R_+$ and position $x \in \T^d$ with $d\geq 1$, respectively. Then our main system reads as
\bq\label{np_ER}
\begin{aligned}
&\pa_t \rho + \nabla \cdot (\rho u) = 0, \quad x \in \T^d, \quad t > 0,\cr
&\pa_t (\rho u) + \nabla \cdot (\rho u \otimes u) + \nabla p= -\nu \rho u + \lambda \rho \nabla \Lambda^{\alpha-d} (\rho-c),\cr
&p = p(\rho) = \rho^\gamma, \quad \gamma \geq 1,
\end{aligned}
\eq
supplemented with the initial data
\bq\label{ini_ER}
(\rho,u)|_{t = 0} =: (\rho_0, u_0), \quad x \in \T^d.
\eq
The right-hand side of the momentum equations consists of the linear damping in velocity with $\nu > 0$ and the Riesz interactions, where the Riesz operator $\Lambda^{s}$ is defined by $(-\Delta)^{s/2}$. For $\alpha$, we deal with the super Coulombic case: 
\[
\max\{d-2, 0\}\leq \alpha <d.
\]
In this case, we observe
\[
\Lambda^{\alpha-d} (\rho-c) = c_{\alpha, d} K_\alpha * (\rho-c), \quad \mbox{with} \quad K_\alpha(x) = \frac{|x|^{-\alpha}}{\alpha}
\]
for some $c_{\alpha, d} > 0$. Here we use the convention $K_\alpha(x) = \log |x|$ for $\alpha = 0$. Note that the case $\alpha = d-2$ corresponds to the Coulombic interactions when $d \geq 3$, and the potential $K$ is called a pure Manev interaction potential when $\alpha = d-1$, see \cite{BDIV97, CJepre, Ill00, IVDB98}.  

The Riesz interaction force is attractive if $\lambda> 0$ and repulsive if $\lambda < 0$. 
In the current work, we consider the case of attractive interactions, i.e., $\lambda > 0$. The background state $c$ is given as a fixed positive constant, and we impose the neutrality condition: 
\bq\label{neu_con}
\int_{\T^d} (\rho - c) \,dx = 0.
\eq 
Since the total mass is conserved in time, without loss of generality, we may assume that $\rho$ is a probability density function, i.e., $\|\rho(t)\|_{L^1} = 1$ for all $t \geq 0$, and subsequently, we set $c=1$.

In the present work, we study the global Cauchy problem for the system \eqref{np_ER}. To be more specific, we are concerned with the global-in-time existence and uniqueness of regular solutions to the system \eqref{np_ER} and its large-time behavior under suitable assumptions on the initial data. To state our main results and compare them with existing results, inspired by \cite{BDIV97, CJepre, Ill00, IVDB98}, we consider two cases\footnote{Precisely, as mentioned, the pure Manev potential corresponds to the case $\alpha = d-1$. However, for simplicity of presentation, we include that critical case in the sub-Manev potential one. }:
\begin{itemize}
\item[(i)] $\max\{d-2, 0\}\leq \alpha \leq d-1$ (sub-Manev potential case) and
\item[(ii)] $d-1 < \alpha <d$ (super-Manev potential case).
\end{itemize}
The local-in-time well-posedness theory for the system \eqref{np_ER} with the zero background state is established in \cite{CJe22}. We would like to remark that the linear analysis presented in \cite{CJe22} illuminates that the system \eqref{np_ER} with $\lambda > 0$ is ill-posed in the absence of pressure. More recently, in the sub-Manev potential and zero background case, the global-in-time existence and uniqueness of regular solution for the system \eqref{np_ER} with $\nu = 0$ are constructed in \cite{DD22} for well-prepared initial data. In \cite{CJ23}, the global-in-time well-posedness and the large-time behavior of solutions to the system \eqref{np_ER} are obtained in the case of pressureless ($p \equiv 0$) and repulsive interaction $(\lambda < 0)$. For the Coulombic interaction case, the global existence theory for the system \eqref{np_ER} in two dimensions with $\nu=0$ and $\lambda < 0$ is established in \cite{IP13, LW14}. We refer to \cite{CJe22, CJ23, DD22} and references therein for the recent works on the system \eqref{np_ER} and related results. Note that 
%
%
the system \eqref{np_ER} is also related to the following fractional porous medium flow \cite{CSV13}:
\bq\label{fpms}
\pa_t \rho + \lambda \nabla \cdot (\rho \nabla \Lambda^{\alpha - d}(\rho - c)) = \Delta \rho^\gamma.
\eq
To be more specific, by considering the overdamped regime, i.e., $\nu = \frac1\e$ after a proper scaling of time $\pa_t \mapsto \e \pa_t$, we find from \eqref{np_ER} that 
\begin{align*}
\begin{aligned}
&\pa_t \rho^{(\e)} + \frac1\e \nabla \cdot (\rho^{(\e)} u^{(\e)}) = 0, \cr
&\pa_t (\rho^{(\e)} u^{(\e)}) + \frac1\e \nabla \cdot (\rho^{(\e)} u^{(\e)} \otimes u^{(\e)}) + \frac1\e \nabla p(\rho^{(\e)})= - \frac1{\e^2}\rho^{(\e)} u^{(\e)} + \frac\lambda\e \rho^{(\e)} \nabla \Lambda^{\alpha-d} (\rho^{(\e)}-c).
\end{aligned}
\end{align*}
Formally, if $\e \ll 1$, we deduce
\[
\frac1{\e}\rho^{(\e)} u^{(\e)} \sim  \lambda  \rho^{(\e)} \nabla \Lambda^{\alpha-d} (\rho^{(\e)}-c) - \nabla p(\rho^{(\e)}),
\]
and thus, if $\rho^{(\e)} \to \rho$ as $\e \to 0$ in some sense, we obtain that $\rho$ satisfies \eqref{fpms}. We refer to \cite{CJe21, LT17} for the study of the relaxation limit of damped Euler equations with nonlocal interaction forces.

%
%
%
%
%
%
\subsection{Reformulations and notations} In order to present our main results, we first reduce the system \eqref{np_ER} to a symmetric system. For the case $\gamma>1$, we consider the sound speed of $\rho$:
\bq\nonumber
\begin{aligned}
	s(\rho) &=\sqrt{p'(\rho)}.
\end{aligned}
\eq
We let
\bq\nonumber
h=N (s(\rho)-s(1)) 
\quad \text{and} \quad N:=\frac{2}{\gamma-1}.
\eq
Setting $\sigma:=\gamma^{1/2}N$, we can write $ h= \sigma (\rho^{1/N}-1)$, and the system \eqref{np_ER} with initial data \eqref{ini_ER} can be rewritten as a symmetric form:
\bq\label{np_ER2}
\begin{aligned}
&\pa_t h + u \cdot \nabla h +\frac{1}{N} (h+\sigma)(\nabla \cdot u) = 0, \quad x \in \T^d, \quad t > 0,\cr
&\pa_t u + u\cdot \nabla u +  \frac{1}{N}(h+ \sigma) \nabla h  = - \nu u + \sigma^{-N}\lambda \nabla \Lambda^{\alpha-d} \lt\{ (h+ \sigma )^{N} - \sigma^N  \rt\}
\end{aligned}
\eq
with 
initial data
\bq\label{ini_ER2}
(h,u)|_{t=0} =: (h_0:=\sigma (\rho_0^{1/N}-1),\, u_0), \quad x \in \T^d.
\eq
Note that the neutrality condition \eqref{neu_con} becomes
\bq\label{neu_con2}
\int_{\T^d} (h+\sigma)^N \,dx = \sigma^N.
\eq
On the other hand, when $\gamma=1$, we let 
$h := \ln \rho$ so that 
\eqref{np_ER} and \eqref{ini_ER} are transformed into the following system:
\bq\nonumber
\begin{aligned}
	&\pa_t h + u \cdot \nabla h +\nabla \cdot u = 0, \quad x \in \T^d, \quad t > 0,\cr
	&\pa_t u + u\cdot \nabla u +  \nabla h  = - \nu u + \lambda \nabla \Lambda^{\alpha-d}(e^h -1),
\end{aligned}
\eq
with initial data 
\bq\nonumber
(h,u)|_{t=0} =: (h_0:= \ln \rho_0,\, u_0), \quad x \in \T^d.
\eq
Similarly, in this case, the neutrality condition becomes 
\[
\int_{\T^d} e^h dx = 1.
\]

%
%
%
%
%
%
Here we introduce our simplified notations. Throughout the paper, we write $\sigma=\gamma^{1/2}N$ and $N=2/(\gamma-1)$. We frequently denote $\| \cdot \|_{X(\T^d)}$ by $\| \cdot \|_{X}$, and $\int_{\T^d}$ by $\int$ for simplicity.
We also denote $\|(h, u)(t)\|_{X}^2:=\|h(t)\|_{X}^2 +\|u(t)\|_{X}^2$, 
We use $C$ as a generic constant which does not depend on $\lambda$, $\nu$, and $t$, 
and denote by $C_{\alpha, \beta, \cdots}$ or $C(\alpha,\beta, \cdots )$ a generic constant depending on $\alpha, \beta, ...$.
We use the notation $A(t) \sim B(t)$ in the sense that there exists a positive $C$ independent of $t$ such that $C^{-1} A(t) \leq B(t) \leq C A(t)$, 
the notation $A \sim_{\alpha} B$ in the sense
such that $C_1 A \leq B \leq C_2 A$ for some $C_1, C_2 >0$ depending only on $\alpha$.

\begin{remark}\label{rem1}
	For $m\in \mathbb{N}$, the classical Sobolev spaces is defined under the norm 
	\bq\nonumber
	\|f\|_{H^m}^2 := \sum_{0\leq k \leq m}\|\pa^{\alpha} f\|_{L^2}^2 
	\eq 
	for multi-index $\alpha$ of order $|\alpha|=k$. 
	Here, one observes
	\bq\nonumber
	\|f\|_{H^m }^2 \sim_{d, m} \sum_{0\leq k\leq m}\|\Lambda^k f\|_{L^2 }^2 \quad \text{for} \quad  m\ge 0,
	\eq 
	since
	\bq\label{equiv_form}
	\sum_{|\alpha|=k}\|\pa^{\alpha} f\|_{L^2 }^2
	\sim_{d, k} \|\Lambda^k f\|_{L^2 }^2 \quad \text{for} \quad  k\ge 1.
	\eq 
	Indeed, we get
 	\[
	\sum_{|\alpha|=k} \|\pa^{\alpha} f \|_{L^2}^2 
	=  \sum_{n\in \mathbb{Z}^d}\sum_{|\alpha|=k} |n^{\alpha}|^2 |\hat f(n)|^2  
	\leq d  \sum_{n\in \mathbb{Z}^d} |n|^{2k} |\hat f(n)|^2 = d\|\Lambda^k f \|_{L^2}^2
	\]
	by Parseval's theorem, and a direct calculation shows the reverse inequality
	\bq\nonumber
	\sum_{n\in \mathbb{Z}^d} |n|^{2k} |\hat f(n)|^2 \leq  
	C_{d, k} \sum_{n\in \mathbb{Z}^d} \sum_{|\alpha|=k}|n^{\alpha}|^{2} |\hat f(n)|^2 
	\eq
	from which, we derive \eqref{equiv_form}.
\end{remark}

%
%
%
%
%
%
\subsection{Main results}
Our main results are two-fold. The first result concerns the global well-posedness of the system \eqref{np_ER2}-\eqref{ini_ER2} and the large-time behavior of the constructed solutions.
\begin{theorem}\label{main_thm1}
	Let $\gamma \geq 1$, $d\ge 1$, $m \in \N$, and $\max\{d-2, 0\}\leq \alpha <d$.
	For $m>1+d/2$, suppose that the initial data $(h_0, u_0)\in H^m(\T^d) \times H^m(\T^d)$ satisfy
$\|(h_0, u_0)\|_{H^m}   \leq \varepsilon$
	for some $\varepsilon>0$ sufficiently small and $\lambda$ is small enough. Then the system \eqref{np_ER2}-\eqref{ini_ER2} admits a unique solution $(h,u) \in \mathcal{C}(\mathbb{R}_+; H^m(\T^d) \times H^m(\T^d))$.
	Moreover, there exist $C, \tilde{\mu}>0$ independent of $t$ such that 
	\bq\label{largetime}
	\|(h, u)(t)\|_{H^m} \leq Ce^{-C\tilde{\mu} t}
	\eq 
	for $t \geq 0$.
\end{theorem}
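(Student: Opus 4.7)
The plan is the standard trio: local existence, a closed a priori energy estimate that is \emph{hypocoercive} to recover dissipation in $h$, and a continuation argument combined with a bootstrap to extract both global existence and exponential decay. Local existence on some interval $[0,T_\star)$ follows from a Friedrichs/Kato-type construction for the quasilinear symmetric hyperbolic system \eqref{np_ER2}, along the lines of \cite{CJe22} adapted to the non-zero background; the nonlocal forcing is of order $\alpha-d+1<1$, so it loses at most a fractional derivative, which is absorbed by $m>1+d/2$ and Moser-type estimates.

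For the a priori estimate I would first read off the linear stability threshold. In Fourier, the linearization of \eqref{np_ER2} about $(0,0)$ reduces, on the longitudinal part of $u$, to a damped oscillator
\[
\ddot h + \nu\,\dot h + |\xi|^2\bigl(\gamma - \lambda|\xi|^{\alpha-d}\bigr) h = 0,
\]
whose restoring force is positive on every nonzero integer mode provided $\lambda<\gamma$, so that $\nu>0$ produces exponential decay at a uniform rate. This is precisely the role of the ``$\lambda$ small'' hypothesis.

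Nonlinearly, I would apply $\partial^\beta$ for $|\beta|\leq m$ to \eqref{np_ER2}, pair with $(\partial^\beta h,\partial^\beta u)$, and use the common symmetric prefactor $(h+\sigma)/N$ in the pressure and continuity equations to cancel the principal transport/acoustic terms after integration by parts. Commutator remainders are trilinear and bounded by $C\|(h,u)\|_{H^m}^3$ via Kato--Ponce, while expanding
\[
\sigma^{-N}\lambda\,\nabla\Lambda^{\alpha-d}\bigl\{(h+\sigma)^N-\sigma^N\bigr\} = \tfrac{\lambda}{\sqrt{\gamma}}\,\nabla\Lambda^{\alpha-d} h + O\bigl(\lambda\|h\|_{H^m}^2\bigr)
\]
produces a linear contribution to be absorbed into a hypocoercive cross-term plus a cubic remainder. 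The raw identity
\[
\frac{d}{dt}\|(h,u)\|_{H^m}^2 + 2\nu\|u\|_{H^m}^2 \leq C\|(h,u)\|_{H^m}^3 + C\lambda\|h\|_{H^m}^2
\]
only dissipates $u$. To recover dissipation in $h$, I would introduce
\[
\mathcal{F}_m(t) := \|(h,u)\|_{H^m}^2 - \delta\sum_{|\beta|\leq m-1}\int_{\T^d}\partial^\beta u\cdot\nabla\partial^\beta h\,dx
\]
for $\delta>0$ small. Its time derivative, after using the equations and integrating by parts, produces a term bounded below by $\delta(\gamma-\lambda)\|\nabla h\|_{H^{m-1}}^2$ modulo terms absorbable into $\|u\|_{H^m}^2$; combined with the neutrality condition \eqref{neu_con2}, which forces the mean of $h$ to be $O(\|h\|_{L^2}^2)$, a Poincaré estimate upgrades this to control of $\|h\|_{H^m}^2$ up to cubic error. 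For $\delta$ and $\|(h_0,u_0)\|_{H^m}$ small enough, $\mathcal{F}_m\sim\|(h,u)\|_{H^m}^2$ and
\[
\frac{d}{dt}\mathcal{F}_m + 2\tilde\mu\,\mathcal{F}_m \leq C\mathcal{F}_m^{3/2},
\]
which, by a standard bootstrap, yields $\mathcal{F}_m(t)\leq \mathcal{F}_m(0)e^{-2\tilde\mu t}$ globally, simultaneously giving global existence by continuation and the decay \eqref{largetime}.

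The main obstacle is the attractive sign: the Riesz forcing contributes a destabilizing term to the energy, quantitatively comparable to $\lambda$ on the longest wavelength (where $|\xi|^{\alpha-d}$ is largest on $\T^d$). Two technical points will need real care: (i) tracking the mean of $h$, since \eqref{neu_con2} is \emph{quadratic} in $h$ rather than simply $\int h\,dx=0$, so Poincaré must be applied to $h-\bar h$ with $\bar h=O(\|h\|_{L^2}^2)$; and (ii) in the super-Manev range $d-1<\alpha<d$, where $\nabla\Lambda^{\alpha-d}$ is a genuinely positive-order operator, arranging the commutator bounds for $\Lambda^s$ against the nonlinear Riesz term so that at most a fractional derivative is lost, controlled once more by $m>1+d/2$ and a fractional Kato--Ponce estimate.
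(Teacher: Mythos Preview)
Your overall architecture (local existence, hypocoercive cross term, continuation) matches the paper, and in the sub-Manev range $\alpha\le d-1$ your sketch is essentially correct and close to what the paper does. The gap is in the super-Manev range $d-1<\alpha<d$, and it is not a commutator issue that a fractional Kato--Ponce estimate can absorb. The obstruction already sits in the \emph{principal} part of the Riesz term at top order: the contribution
\[
\lambda\sigma^{-N}\int \Lambda^m u\cdot \Lambda^m\nabla\Lambda^{\alpha-d}\bigl\{(h+\sigma)^N-\sigma^N\bigr\}\,dx
\]
has linearization $\tfrac{\lambda}{\sqrt\gamma}\int\Lambda^m u\cdot\nabla\Lambda^{m+\alpha-d}h\,dx$, which involves $h$ at order $m+1+\alpha-d>m$. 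This cannot be bounded by $C\lambda\|h\|_{H^m}\|u\|_{H^m}$, so your displayed ``raw identity'' $\tfrac{d}{dt}\|(h,u)\|_{H^m}^2+2\nu\|u\|_{H^m}^2\le C\|(h,u)\|_{H^m}^3+C\lambda\|h\|_{H^m}^2$ fails as written, and rearranging commutators does nothing for the symbol-level loss.

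The paper closes this by using the \emph{pressure} to trade spatial derivatives for time derivatives in an iteration. Setting $l=(d-\alpha)/2\in(0,\tfrac12)$ and
\[
\mathcal R_m(k)=\lambda\sigma^{-N}\!\int(h+\sigma)^{k(N-2)}\,\Lambda^{m-kl}u\cdot\Lambda^{m-(k+2)l}\nabla\{(h+\sigma)^N-\sigma^N\}\,dx,
\]
they factor $(h+\sigma)^{N-1}\nabla h=(h+\sigma)^{N-2}\cdot(h+\sigma)\nabla h$, recognize the last factor as the pressure term in the momentum equation, and substitute $\partial_t u$ plus lower order. This yields
\[
\mathcal R_m(k)\ \le\ \tilde\lambda\,\mathcal R_m(k+1)\ -\ \frac{\tilde\lambda}{2}\,\frac{d}{dt}\!\int(h+\sigma)^{(k+1)(N-2)}|\Lambda^{m-(k+1)l}u|^2\,dx\ +\ (\text{cubic}),
\]
so each step lowers the order on the Riesz factor by $l$ at the cost of an extra piece in the energy. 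After $k_0$ steps with $(k_0+2)l\ge 1$ the remaining $\mathcal R_m(k_0)$ is genuinely of order $\le m$ and closes as in the sub-Manev case; the final Lyapunov functional is $\|(h,u)\|_{H^m}^2$ augmented by $\sum_{k=1}^{k_0}\tilde\lambda^k\int(h+\sigma)^{k(N-2)}|\Lambda^{m-kl}u|^2$, equivalent to $\|(h,u)\|_{H^m}^2$ for $\lambda$ small. This pressure-mediated order reduction is the missing idea in your point~(ii).
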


\begin{remark}\label{rmk_gc}
Since the proof for the case $\gamma = 1$ is very similar to the case $\gamma > 1$, we only present the details of the proof for the case $\gamma > 1$.
\end{remark}

\begin{remark} In the case of pressureless and repulsive Riesz interaction, the solution space $(h,u) \in H^m \times H^{m + \frac{d-\alpha}{2}}$ is considered in \cite{CJ23}. In our case, due to the presence of pressure, we cannot deal with solution space in which $h$ and $u$ have different regularities. 
\end{remark}

\begin{remark} By the Sobolev embedding, Thereom \ref{main_thm1} gives the existence and uniqueness of $\mc^1$-solutions $(h, u)$, and this together with Proposition \ref{prop_cla} below yields the existence and uniqueness of $\mc^1$-solutions $(\rho, u)$ to the system \eqref{np_ER}.
\end{remark}

Our second main result is on the a priori temporal estimates on the lowest norm of the solution $(h, u)$. In the theorem above, the large time behavior of the classical solution $(h, u)$ is obtained under the smallness assumption on initial data $(h_0, u_0)$. However, such a smallness condition is not necessary for the zeroth order norm of solutions.

\begin{theorem}\label{main_thm2}
	Let $d \ge 2$ and $(h, u)$ be a global classical solution to \eqref{np_ER2} with sufficiently small $\lambda\in (0,1)$.
	Assume that
\begin{itemize}
	\item[(i)] $\inf_{(x, t) \in \T^d \times \mathbb{R}_{+}} h(x, t) +\sigma \ge h_{min} >0$ and 
	\item[(ii)] $h, u\in L^\infty(\T^d \times \R_+)$.
\end{itemize}
	Then there exist $C, \mu>0$ independent of $t$ such that 
	\bq\label{largetime3}
	\|(h, u)(t)\|_{L^2}  \leq C e^{-C \mu t}
	\eq
	for $t \geq 0$.
\end{theorem}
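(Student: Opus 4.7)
The plan is a Matsumura--Nishida-style Lyapunov argument carried out in the original $(\rho,u)$ variables: combine the relative physical energy $E(t)$, whose dissipation controls only $\|u\|_{L^2}^2$, with a cross-term $F(t)$ that generates the missing $\|h\|_{L^2}^2$ dissipation, and show that $\mathcal{L}:=E+\eta F$ is equivalent to $\|(h,u)\|_{L^2}^2$ and obeys $\tfrac{d}{dt}\mathcal{L}\leq -C\mu\,\mathcal{L}$.

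Using $\rho=\sigma^{-N}(h+\sigma)^N$, I would work with
\[
E(t):=\int\lt[\tfrac{1}{2}\rho|u|^2+\tfrac{1}{\gamma-1}\bigl(\rho^\gamma-1-\gamma(\rho-1)\bigr)\rt]dx-\tfrac{\lambda}{2}\int(\rho-1)\Lambda^{\alpha-d}(\rho-1)\,dx.
\]
A direct computation along \eqref{np_ER}, using the identity
\[
\int\rho u\cdot\nabla\Lambda^{\alpha-d}(\rho-1)\,dx=\tfrac{1}{2}\tfrac{d}{dt}\int(\rho-1)\Lambda^{\alpha-d}(\rho-1)\,dx
\]
(which follows from continuity and self-adjointness of $\Lambda^{\alpha-d}$) yields $\tfrac{d}{dt}E(t)=-\nu\int\rho|u|^2\,dx$. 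Under the standing hypotheses the first two pieces of $E$ are pointwise equivalent to $\rho|u|^2$ and $h^2$ via Taylor expansion and $h+\sigma\geq h_{\min}$, while $|\int(\rho-1)\Lambda^{\alpha-d}(\rho-1)|\leq \|\rho-1\|_{L^2}^2$ since $\Lambda^{\alpha-d}$ has Fourier symbol $|n|^{\alpha-d}\leq 1$ on mean-zero $L^2(\T^d)$. Thus $E(t)\sim \|(h,u)(t)\|_{L^2}^2$ for $\lambda$ sufficiently small.

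Second, let $\phi(t,\cdot)$ be the mean-zero solution on $\T^d$ of $-\Delta\phi=\rho-1$, and set $F(t):=\int\rho u\cdot\nabla\phi\,dx$. Differentiating $F$ and inserting $\partial_t(\rho u)$ from the momentum equation together with $\nabla\partial_t\phi=-\mathbb{P}(\rho u)$ ($\mathbb{P}$ is the $L^2$-projection onto gradient fields) produces, after integration by parts,
\begin{align*}
\tfrac{d}{dt}F(t)=&-\int\bigl(p(\rho)-p(1)\bigr)(\rho-1)\,dx+\int\rho u\otimes u:\nabla^2\phi\,dx\\
&-\nu\int\rho u\cdot\nabla\phi\,dx+\lambda\int\rho\nabla\Lambda^{\alpha-d}(\rho-1)\cdot\nabla\phi\,dx-\int\rho u\cdot\mathbb{P}(\rho u)\,dx.
\end{align*}
The pressure cross-term supplies the crucial coercivity $(p(\rho)-p(1))(\rho-1)\geq\gamma\rho_{\min}^{\gamma-1}(\rho-1)^2\geq c\|h\|_{L^2}^2$ by the mean value theorem. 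The convective, damping and projection terms are bounded using the $L^\infty$ hypotheses, elliptic regularity $\|\nabla^k\phi\|_{L^2}\lesssim\|\rho-1\|_{L^2}$ ($k=1,2$), and Young's inequality, contributing at most $\varepsilon\|h\|_{L^2}^2+C_\varepsilon\|u\|_{L^2}^2$. The linear part of the Riesz cross-term reduces by integration by parts to $\lambda\int(\rho-1)\Lambda^{\alpha-d}(\rho-1)\leq\lambda\|h\|_{L^2}^2$. Altogether,
\[
\tfrac{d}{dt}F(t)\leq -c\|h\|_{L^2}^2+C_\star\|u\|_{L^2}^2+C\lambda\|h\|_{L^2}^2,
\]
with $C_\star$ depending on $\nu,\|\rho\|_{L^\infty},\|u\|_{L^\infty}$. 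Since $|F|\leq C\|u\|_{L^2}\|h\|_{L^2}\lesssim E$, the Lyapunov functional $\mathcal{L}(t):=E(t)+\eta F(t)$ is equivalent to $\|(h,u)\|_{L^2}^2$ once $\eta$ is small, and choosing first $\lambda$ and then $\eta$ sufficiently small (in terms of $\nu,\rho_{\min},\|\rho\|_{L^\infty},\|u\|_{L^\infty}$) yields $\tfrac{d}{dt}\mathcal{L}\leq -C\mu\,\mathcal{L}$; Gronwall then delivers \eqref{largetime3}.

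The main obstacle is the quadratic Riesz remainder $\lambda\int(\rho-1)\nabla\Lambda^{\alpha-d}(\rho-1)\cdot\nabla\phi$ that appears after the split $\rho=1+(\rho-1)$ in the Riesz cross-term. In the sub-Manev range $\alpha\leq d-1$, $\nabla\Lambda^{\alpha-d}=\Lambda^{\alpha-d+1}R$ is bounded on $L^2$ and Young's inequality immediately yields $C\lambda\|\rho-1\|_{L^\infty}\|h\|_{L^2}^2$, absorbable for $\lambda$ small. In the super-Manev range $\alpha\in(d-1,d)$ the positive fractional power $\Lambda^{\alpha-d+1}$ cannot be estimated from $L^\infty\cap L^2$ alone; I would rewrite using $\nabla\Lambda^{\alpha-d}(\rho-1)=\Lambda^{\alpha-d+2}\nabla\phi$ to transfer the extra fractional derivative onto $\nabla\phi$ (which enjoys full elliptic regularity), then close via a Kato--Ponce-type commutator argument exploiting $\rho-1\in L^\infty$ and the smallness of $\lambda$. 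This is the technically most delicate point and is precisely where both standing assumptions are used.
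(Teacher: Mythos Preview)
Your overall strategy---physical energy plus a cross-term built from the Newtonian potential $\phi$ solving $-\Delta\phi=\rho-1$---is exactly the paper's approach, and the sub-Manev case goes through as you describe. The genuine gap is in the super-Manev case, and it stems from your choice of cross-term $F=\int\rho u\cdot\nabla\phi$ rather than $\int u\cdot\nabla\phi$.

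By inserting $\partial_t(\rho u)$ you pick up the factor $\rho$ in the Riesz term, producing the quadratic remainder $\lambda\int(\rho-1)\nabla\Lambda^{\alpha-d}(\rho-1)\cdot\nabla\phi\,dx$. Your proposed fix does not close under the stated hypotheses: rewriting $\nabla\Lambda^{\alpha-d}(\rho-1)=\Lambda^{\alpha-d+2}\nabla\phi$ gains nothing, since elliptic regularity yields only $\nabla\phi\in H^1$ (because $\|\nabla^2\phi\|_{L^2}\lesssim\|\rho-1\|_{L^2}$), whereas you would need $\nabla\phi\in H^{\alpha-d+2}$ with $\alpha-d+2\in(1,2)$, i.e.\ an extra $\Lambda^{\alpha-d+1}$ on $\rho-1$. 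A Kato--Ponce commutator estimate for $[\Lambda^{s/2},\rho-1]$ would likewise require regularity of $\rho-1$ beyond $L^\infty$ (a Lipschitz or H\"older bound on $\rho$), which is not among the assumptions; the point of the theorem is that the constants depend only on $\|h\|_{L^\infty}$, $\|u\|_{L^\infty}$, $h_{\min}$.

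The paper sidesteps the issue entirely by taking the cross-term to be $\mu\int(u-m_c)\cdot\nabla\phi\,dx$ (here $m_c=\int\rho u\,dx$ is the total momentum, which satisfies $m_c'=-\nu m_c$ and is tracked separately). Differentiating with the \emph{velocity} equation, the Riesz contribution is simply
\[
\lambda\int\nabla\Lambda^{\alpha-d}(\rho-1)\cdot\nabla\phi\,dx=\lambda\int(\rho-1)\,\Lambda^{\alpha-d}(\rho-1)\,dx=\lambda\|\Lambda^{(\alpha-d)/2}(\rho-1)\|_{L^2}^2\le\lambda\|\rho-1\|_{L^2}^2,
\]
with no quadratic remainder and no distinction between the sub- and super-Manev ranges. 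Replacing your $F$ by $\int u\cdot\nabla\phi$ (or $\int(u-m_c)\cdot\nabla\phi$) removes the obstacle you flagged and the rest of your argument then matches the paper's.
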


\begin{remark} The lower bound assumption on $h +\sigma$ in Theorem \ref{main_thm2} corresponds to the assumption $\rho \geq \rho_{min}$ for some $\rho_{min} > 0$ in the system \eqref{np_ER}. The condition (i) in Theorem \ref{main_thm2} requires some smallness on the $L^\infty$ norm of the negative part of $h$ such that $\|h^-\|_{L^\infty} < \sigma$.
\end{remark}

\begin{remark} The constants $C$ and $\mu$ appeared in Theorem \ref{main_thm2} depend only on $\|h\|_{L^\infty}$, $\|u\|_{L^\infty}$, and $h_{min}$. Formally speaking, it does not require any differentiability of solutions with respect to the spatial variable. Thus, one may use the argument used in the proof of Theorem \ref{main_thm2} to show the large-time behavior of bounded solutions.
\end{remark}

\begin{remark}For the same reason as in Remark \ref{rmk_gc}, we only take into account the case $\gamma > 1$ for the proof of Theorem \ref{main_thm2}. When $\gamma=1$, the condition (i) in Theorem \ref{main_thm2} is not needed since $h \in L^\infty(\T^d \times \R_+)$ implies that $\rho = e^h \geq e^{-\|h\|_{L^\infty}} > 0$.
\end{remark}
 
%
%
%
%
%
\subsection{Ideas and strategies of the proof} 
Here, we give some technical ideas and strategies involved in the proofs of Theorems \ref{main_thm1} and  \ref{main_thm2}.  

\subsubsection{Global-in-time well-posedness and large-time behavior}\label{ssec_gc}

As presented, we construct our solutions in the Sobolev space $H^m$. Due to the symmetric structure of the reformulated system \eqref{np_ER2}, when we estimate $\|(h,u)\|_{H^m}$, the top order terms $(h + \sigma)(\nabla \cdot u)$ and $(h + \sigma) \nabla h$ are cancelled each other. Thus, as expected, it is important to analyze the Riesz interaction term. In the sub-Manev potential case, i.e. $\alpha \leq d-1$, roughly speaking, $\|\nabla \Lambda^{\alpha-d} \lt\{ (h+ \sigma )^{N} - \sigma^N  \rt\}\|_{\dot H^m} \ls \|h\|_{\dot H^m}$ under appropriate bound estimates, due to $m + 1 + \alpha - d \leq m$. This allows us to close the a priori estimates of solutions in $H^m$. However, in the super-Manev potential case, we get $m+1 + \alpha - d > m$, and thus the interaction term cannot be handled by a similar energy estimate as in the sub-Manev potential case. To overcome this difficulty, we develop a novel technique to reduce the order of derivatives in the Riesz interaction term with the help of pressure. To be specific, we define a function
\[
\mathcal{R}_m(k)= \lambda \sigma^{-N} \int (h+\sigma)^{k(N-2)} \Lambda^{m-kl}u\cdot \Lambda^{m-(k+2)l}\nabla \{(h+\sigma)^N -\sigma^N\} \, dx
\]
for any $0\leq k <m/l$ and $0<l<1/2$ with $l=(d-\alpha)/2$. Note that the term 
\[
\mathcal{R}_m(0) = \lambda \sigma^{-N} \int \Lambda^m u \cdot \Lambda^m \nabla \Lambda^{\alpha-d} \{ (h+\sigma)^{N} -\sigma^{N}\} \, dx
\]
appears in the estimate of $\|(h,u)\|_{\dot H^m}$. By an iteration argument, for $1\leq k_0 <m/l$, we can derive 
\[\begin{aligned}
\mathcal{R}_m(0) &\leq C \|(h,u)\|_{\dot H^m}^2   - \sum_{k=1}^{k_0}\frac{\tilde{\lambda}^k}{2} \frac{d}{dt} \int (h+\sigma)^{k(N-2)} |\Lambda^{m-kl} u|^2 \, dx,
\end{aligned}\]
where $\tilde{\lambda}=\lambda \sigma^{-N}N^2$ and $C = C(\lambda, \|(h, u)\|_{H^m}) > 0$ independent of $t$. In particular, here $C \to 0$ as $\lambda \to 0$. Then we can close the $H^m$ a priori estimate for the solution. On the other hand, for the global-in-time regularity of solutions, it is required to have appropriate dissipation rates for $h$ and $u$. The linear velocity damping directly gives the dissipation term $\|u\|_{H^m}$ in the estimate of $\|(h, u)\|_{H^m}$, however, the dissipation rate for $h$ is not simply obtained from that estimate. For this, we employ a hypocoercivity-type estimate. We take into account the following integral: 
\[
\int \nabla (\Lambda^s h) \cdot \Lambda^s u\, dx
\]
with $s \leq m-1$ which gives the dissipation rate for $\nabla h$ in $H^{m-1}$. Note that for $\mu > 0$ small enough 
\[
\|(h,u)\|_{H^m}^2  + \mu\sum_{s=0}^{m-1}\int \nabla (\Lambda^s h) \cdot \Lambda^s u\, dx \sim \|(h,u)\|_{H^m}^2. 
\]
Then, together with adding the zeroth-order estimate of solutions and taking $\lambda > 0$ small enough, we derive 
\[
\frac{d}{dt}{\mathcal X}_m + \xi {\mathcal X}_m \leq 0
\]
for some $\xi > 0$, where 
\[
{\mathcal X}_m := \sum_{k=0}^{k_0} \tilde{\lambda}^k \int  (h+\sigma)^{k(N-2)} |\Lambda^{m-kl} u|^2\, dx
+\|u\|_{L^2}^2
+\|\Lambda^m h\|_{L^2}^2
+\|h\|_{L^2}^2
+\mu \int \nabla (\Lambda^{m-1} h) \cdot \Lambda^{m-1} u\, dx.
\]
Since ${\mathcal X}_m \sim  \|(h,u)\|_{H^m}^2$, we can have the uniform-in-time bound on $\|(h,u)(t)\|_{H^m}^2$, and moreover, it decays to zero exponentially fast as time goes to infinity.

\subsubsection{A priori large-time behavior} The proof of the large-time behavior estimate relies on the Lyapunov function method. To this end, we first consider the following modulated energy function $\mathcal{E} = \mathcal{E}(t)$:
\bq\nonumber
\begin{aligned}
	\mathcal{E} := & \int (h+\sigma)^N |u-m_c|^2\, dx 
	+\frac{2}{N(N+2)} \int f\lt(\, \frac{N+2}{N}, (h+\sigma)^N; \sigma^N \, \rt) dx \\
	& -\frac{\lambda }{\sigma^{N}} \int | \Lambda^{\frac{\alpha-d}{2}} \{ (h+\sigma)^N -\sigma^N\}|^2\, dx+ \frac{1}{2}|m_c|^2,
\end{aligned}
\eq  
where $f(\gamma, r; r_0)$ and $m_c = m_c(t)$ are given by
\[
f(\gamma, r; r_0) := r \int_{r_0}^r \frac{s^{\gamma}-r_0^{\gamma}}{s^2}\, ds
\]
and
\bq\nonumber
m_c :=  \sigma^{-N} \int (h+\sigma)^N u\, dx,
\eq 
respectively, for $r_0 > 0$, $\gamma \geq 1$, and $r\in [0, \tilde{r}]$ with $\tilde r > 0$. Then, we can show that $\mathcal{E}$ satisfies
	\bq\nonumber
	\frac{d}{dt} \mathcal{E} +\mathcal{D} =0,
	\eq
	where the dissipation rate $\mathcal{D} = \mathcal{D}(t)$ is given by
	\bq \nonumber
	\mathcal{D}:=2\nu \int (h+\sigma)^N |u-m_c|^2\, dx + \nu |m_c|^2.
	\eq
On the other hand, it follows from the symmetry of the operator $\Lambda^{\alpha-d}$ that 
\[
m_c '(t) =- \nu m_c(t), 
\]
and thus
\[
m_c(t) \leq m_c(0)e^{-\nu t}.
\]
Moreover, under the assumptions of Theorem \ref{main_thm2}, we can obtain
\[
\mathcal{E}(t)\sim \|(h,u)(t)\|_{L^2}^2 + |m_c(t)|^2 \quad \mbox{and} \quad \mathcal{D}(t) \sim \|u(t)\|_{L^2}^2 + |m_c(t)|^2.
\]
Here we would like to remark that there is no dissipation rate for $h$ in $\mathcal{D}$. Thus, to get the exponential decay of convergence of $\|(h,u)(t)\|_{L^2}^2$, we need to get some dissipation term $\|h\|_{L^2}^2$ in $\mathcal{D}$. Precisely, it suffices to have 
\[
\mathcal{D}(t) \gs \|(h,u)(t)\|_{L^2}^2 + |m_c(t)|^2.
\]
For this, motivated from \cite{CJ21}, we take into account a perturbed energy function $\mathcal{E}^{\mu} = \mathcal{E}^{\mu}(t)$: 
\bq\nonumber
\mathcal{E}^{\mu} := \mathcal{E} + \mu \int  (u-m_c) \cdot \nabla W*[(h+\sigma)^N -\sigma^N]\,dx
\eq 
for sufficiently small $\mu>0$, where the potential $W$ is an even function explicitly written as
\bq\label{eqn_W}
W(x) = \left\{\begin{array}{lcl}\displaystyle  -c_0 \log |x| + G_0(x) & \mbox{ when } & d=2, \\
c_1|x|^{2-d} + G_1(x) & \mbox{ when } & d\ge 3.\end{array}\right.
\eq
Here $c_0>0$ and $c_1>0$ are normalization constants and $G_0$ and $G_1$ are smooth functions over $\T^2$ and $\T^d$ ($d\ge 3$), respectively. 
We notice that
\[
\mathcal{E}(t) \sim \mathcal{E}^{\mu}(t)
\]
for $\mu > 0$ small enough, and it is clear to get 
\bq\label{e_pertu}
\frac{d}{dt} \mathcal{E}^{\mu}(t) +\mathcal{D}^\mu(t) =0,
\eq
where 
\[
\mathcal{D}^{\mu}(t):= \mathcal{D}(t) - \mu \frac{d}{dt}\int  (u-m_c) \cdot \nabla W*[(h+\sigma)^N -\sigma^N]\,dx.
\]
We then observe that the perturbed dissipation rate $\mathcal{D}^\mu$ produces the required dissipation rate for $h$, in fact, we can show 
\[
\mathcal{D}^\mu(t) \gs \|(h,u)(t)\|_{L^2}^2 + |m_c(t)|^2,
\]
from which we have the result of Theorem \ref{main_thm2}.

\begin{remark} For any $g \in L^2(\T^d)$ with $\int  g \,dx =0$, $U := W* g \in H^1(\T^d)$ is the unique function that satisfies the following condition:
\[
\int  U\,dx = 0 \quad \mbox{and} \quad \int  \nabla U \cdot \nabla \psi\,dx = \int  g\,\psi\,dx \quad \forall\, \psi \in H^1(\T^d),
\]
i.e. $U$ is the unique weak solution to $-\Delta U = g$.
\end{remark}

%
%
%
%
%
\subsection{Organization of paper} The rest of this paper is organized as follows. In Section \ref{sec:2}, we present preliminary materials used throughout the paper. In Section \ref{sec:3}, we provide a priori estimates for system \eqref{np_ER2} showing the uniform-in-time bound estimates of our solutions $(h,u)$ in $H^m \times H^m$ space under the smallness assumptions on the solutions. Section \ref{sec:4} is devoted to proving Theorem \ref{main_thm1} based on the estimates from the previous section. Finally, in Section \ref{sec:5}, we present details of the proof for Theorem \ref{main_thm2}.

%
%
%
%
%

\section{Preliminaries}\label{sec:2}
\setcounter{equation}{0}
In this section, we present local-in-time well-posedness theory for the system \eqref{np_ER2} and list several useful inequalities which will be used for obtaining a priori estimates in the following sections.

\subsection{Local well-posedness}
We first provide the local-in-time existence and uniqueness of regular solutions to the system \eqref{np_ER2}-\eqref{ini_ER2}. 
  
\begin{theorem}\label{local}
	Let $d\ge1$ and $\max\{d-2, 0 \}\leq\alpha<d$.
	For any $m>1+d/2$, suppose that $(h_0, u_0)\in H^{m}(\T^d)\times H^{m}(\T^d)$.
	Then for any positive constants $\varepsilon_1<M_0$, there exists a positive constant $T_0>0$ depending only on $\varepsilon_1, M_0$ such that if $\|(h_0, u_0)\|_{H^m}
	<\varepsilon_1$, then the system \eqref{np_ER2}-\eqref{ini_ER2} admits a unique solution $(h, u)\in \mathcal{C}([0, T_0); H^m(\T^d)\times H^m(\T^d))$ satisfying 
	$$
	\sup_{0\leq t\leq T_0} \|(h, u)(t)\|_{H^m} \leq M_0.
	$$
\end{theorem}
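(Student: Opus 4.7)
The plan is to run the classical Picard--Friedrichs iteration for quasilinear symmetric hyperbolic systems, adapted to accommodate the nonlocal Riesz forcing, following essentially the same path as the local theory for the zero-background case in \cite{CJe22}. Given the iterate $(h^n, u^n)$ with $(h^0, u^0) := (h_0, u_0)$, I would define $(h^{n+1}, u^{n+1})$ as the unique $H^m$-solution to the linearized system
\begin{align*}
&\pa_t h^{n+1} + u^n \cdot \nabla h^{n+1} + \tfrac{1}{N}(h^n+\sigma)\,\nabla\cdot u^{n+1} = 0,\\
&\pa_t u^{n+1} + u^n \cdot \nabla u^{n+1} + \tfrac{1}{N}(h^n+\sigma)\,\nabla h^{n+1} = -\nu u^{n+1} + \sigma^{-N}\lambda\, \nabla\Lambda^{\alpha-d}\bigl\{(h^n+\sigma)^N - \sigma^N\bigr\},
\end{align*}
with initial data $(h_0, u_0)$. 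Existence of each iterate follows from standard linear symmetric-hyperbolic theory (Friedrichs mollification or Galerkin projection), provided $h^n+\sigma$ stays strictly positive and $u^n\in W^{1,\infty}$; both are guaranteed inductively by $H^m\hookrightarrow W^{1,\infty}$ (which holds since $m>1+d/2$) together with the smallness condition $\|h^n\|_{L^\infty}\le\sigma/2$, valid as long as $\|(h^n, u^n)\|_{H^m}\le M_0$ is chosen small relative to $\sigma$.

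Next, I would close a uniform $H^m$ bound on the iterates. Applying $\Lambda^k$ for $0 \le k \le m$ to each linearized equation and pairing with $\Lambda^k h^{n+1}$ and $\Lambda^k u^{n+1}$ in $L^2$, the symmetric structure produces a cancellation between the two top-order coupling terms involving $\tfrac{1}{N}(h^n+\sigma)$. The transport and coefficient commutators $[\Lambda^k, u^n\cdot\nabla]$ and $[\Lambda^k, h^n+\sigma]\nabla$ are handled by the Kato--Ponce and Moser commutator estimates, while the composition $(h^n+\sigma)^N - \sigma^N$ is bounded in $H^m$ by Moser's composition inequality, with constants depending only on $\|h^n\|_{L^\infty}$, hence on $M_0$. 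The nonlocal source is then treated according to the range of $\alpha$: in the sub-Manev regime $\alpha\le d-1$, the operator $\nabla\Lambda^{\alpha-d}$ is bounded on $H^m$ and gives $\|\nabla\Lambda^{\alpha-d}\{(h^n+\sigma)^N-\sigma^N\}\|_{H^m}\lesssim \|(h^n+\sigma)^N-\sigma^N\|_{H^m}$ directly; in the super-Manev regime $d-1<\alpha<d$, I would integrate by parts to shift one derivative from the Riesz term onto $\Lambda^k u^{n+1}$ and pair against $\Lambda^{k-1}\nabla\Lambda^{\alpha-d}\bigl\{(h^n+\sigma)^N-\sigma^N\bigr\}$, which is $H^m$-controllable since $k-1+1+\alpha-d\le m$, and absorb the resulting $\|u^{n+1}\|_{H^m}$ via Young's inequality. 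Combined, these yield
\[
\tfrac{d}{dt}\|(h^{n+1},u^{n+1})\|_{H^m}^2 \le C(M_0)\bigl(1 + \|(h^{n+1}, u^{n+1})\|_{H^m}^2\bigr),
\]
and a standard induction on $n$, with $T_0=T_0(\varepsilon_1, M_0)$ chosen sufficiently small, delivers $\sup_{[0,T_0]}\|(h^n,u^n)\|_{H^m}\le M_0$.

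The final step is a contraction argument in $L^2$. Setting $(\delta h^n, \delta u^n):=(h^{n+1}-h^n, u^{n+1}-u^n)$ and subtracting consecutive linearized systems, the analogous symmetric $L^2$ energy estimate -- with differences of nonlinear coefficients controlled via the uniform $H^m$ bound and $H^m\hookrightarrow W^{1,\infty}$ -- produces
\[
\tfrac{d}{dt}\|(\delta h^n, \delta u^n)\|_{L^2}^2 \le K\,\|(\delta h^n, \delta u^n)\|_{L^2}^2 + K\,\|(\delta h^{n-1}, \delta u^{n-1})\|_{L^2}^2,
\]
which, after possibly shrinking $T_0$ further, makes $(h^n, u^n)$ Cauchy in $\mathcal{C}([0,T_0]; L^2\times L^2)$. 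Interpolation with the uniform $H^m$ bound upgrades convergence to $\mathcal{C}([0,T_0]; H^{m'})$ for every $m'<m$, and a Bona--Smith approximation of the initial data then recovers strong continuity in $H^m$. Uniqueness is a special case of the same $L^2$ stability estimate applied to the difference of any two $H^m$-solutions. The main obstacle I expect is the super-Manev case $d-1<\alpha<d$: since $\nabla\Lambda^{\alpha-d}$ has positive order $1+\alpha-d\in(0,1)$, the Riesz source cannot be closed in $H^m$ by itself, and one must effectively borrow the missing fractional regularity from the pressure coupling. The integration-by-parts maneuver outlined above is essentially a single-step version of the iterated $\mathcal{R}_m(k)$ construction advertised in Section 1.3, and is the key technical point; everything else is routine for quasilinear symmetric hyperbolic systems on $\T^d$.
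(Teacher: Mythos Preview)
The paper does not actually prove this theorem; it only states that the result ``can be established by using a similar argument as in \cite{CJe22}'' and moves on. Your outline is therefore more detailed than anything the paper provides, and in the sub-Manev range $\alpha\le d-1$ it is fine: since $\nabla\Lambda^{\alpha-d}$ then has nonpositive order, the Riesz source frozen at the previous iterate lies in $H^m$ and the Picard scheme closes routinely.

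The super-Manev case, however, has a genuine gap. In your linearization the Riesz term carries $h^n$ while the pressure term carries $h^{n+1}$; this decoupling destroys exactly the mechanism behind the $\mathcal{R}_m(k)$ iteration, which works by trading derivatives on the Riesz term for the pressure term \emph{of the same function} via the momentum equation. Your proposed fix---shift one $\Lambda$ from the Riesz factor onto $\Lambda^m u^{n+1}$---produces $\Lambda^{m+1}u^{n+1}$ at the top order $k=m$, which is not controlled by $\|u^{n+1}\|_{H^m}$; and more generally the pairing carries $2m+1+\alpha-d>2m$ total derivatives, so no split into two $H^m$ factors is possible. (Nor does a ``single step'' of the $\mathcal{R}_m(k)$ device suffice: one needs roughly $2/(d-\alpha)-2$ steps, which blows up as $\alpha\to d^-$, and each step uses the equation, not just integration by parts.) The cure is to keep the pressure--Riesz coupling intact in whatever approximation you use. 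Either linearize with $h^{n+1}$ also inside the Riesz term (the linear step is then nonlocal but still well posed, and the $\mathcal{R}_m(k)$ argument applies at each $n$), or---more in the spirit of what \cite{CJe22} presumably does---regularize the full nonlinear system by Friedrichs mollification or Galerkin truncation, close uniform $H^m$ bounds via the modified energy of Section~\ref{sec:3} (which already incorporates the $\mathcal{R}_m(k)$ machinery), and pass to the limit.
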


The above local well-posedness can be established by using a similar argument as in \cite{CJe22}, where the Euler-Riesz system is studied when $\rho$ is integrable. 

In the proposition below, we show a relation between two solutions $(\rho, u)$ and $(h,u)$ to the systems \eqref{np_ER} and \eqref{np_ER2}, respectively.


\begin{proposition}\label{prop_cla}
	Let $d\ge1$ and $\gamma>1$. 
	For any fixed $T>0$ and given $\sigma=\gamma^{1/2}N$, if $(\rho, u)\in \mathcal{C}^1(\T^d \times [0, T))$ solves the system \eqref{np_ER} with $\rho>0$, 
	then $(h, u)\in \mathcal{C}^1(\T^d \times [0, T))$ solves the system \eqref{np_ER2} with $h+\sigma>0$.
	Conversely, if $(h, u)\in \mathcal{C}^1(\T^d \times [0, T))$ solves the system \eqref{np_ER2} with $h+\sigma>0$, then $(\rho, u)\in \mathcal{C}^1(\T^d \times [0, T))$ solves the system \eqref{np_ER} with $\rho>0$.
\end{proposition}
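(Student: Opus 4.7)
The plan is to exploit the explicit pointwise change of variables $h = \sigma(\rho^{1/N}-1)$, equivalently $\rho = \sigma^{-N}(h+\sigma)^N$, which is a smooth diffeomorphism between the half-lines $\{\rho > 0\}$ and $\{h+\sigma > 0\}$. Under this map, $\mathcal{C}^1$ regularity is automatically preserved in both directions by the chain rule, so the only real content is the algebraic verification that the PDEs \eqref{np_ER} and \eqref{np_ER2} are equivalent on the level of classical solutions. I will carry out the forward direction (from \eqref{np_ER} to \eqref{np_ER2}) and then observe that every step is reversible.

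First I would rewrite the continuity equation in non-conservative form $\partial_t \rho + u\cdot\nabla\rho + \rho(\nabla\cdot u) = 0$, substitute $\rho = \sigma^{-N}(h+\sigma)^N$, compute $\partial_t \rho = N\sigma^{-N}(h+\sigma)^{N-1}\partial_t h$ and $\nabla\rho = N\sigma^{-N}(h+\sigma)^{N-1}\nabla h$, and divide by the nonzero factor $N\sigma^{-N}(h+\sigma)^{N-1}$. This directly yields the first equation in \eqref{np_ER2}. Next, I would divide the momentum equation in \eqref{np_ER} by $\rho$ (valid since $\rho>0$) to extract the linear damping $-\nu u$ and the pressure term $\rho^{-1}\nabla p = \gamma \rho^{\gamma-2}\nabla\rho$. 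Plugging in $\rho = \sigma^{-N}(h+\sigma)^N$ gives
\[
\gamma\rho^{\gamma-2}\nabla\rho = \gamma N \sigma^{-N(\gamma-1)}(h+\sigma)^{N\gamma - N - 1}\nabla h.
\]
Using the identities $N(\gamma-1) = 2$ (so the exponent of $h+\sigma$ collapses to $1$ and $\sigma^{-N(\gamma-1)} = \sigma^{-2}$) and $\sigma^2 = \gamma N^2$, one finds $\gamma N \sigma^{-2} = 1/N$, which recovers $\frac{1}{N}(h+\sigma)\nabla h$. For the Riesz term, writing $\rho - c = \rho - 1 = \sigma^{-N}\{(h+\sigma)^N - \sigma^N\}$ directly produces the right-hand side of the momentum equation in \eqref{np_ER2}.

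For the converse, given $(h,u)\in\mathcal{C}^1$ solving \eqref{np_ER2} with $h+\sigma>0$, set $\rho := \sigma^{-N}(h+\sigma)^N$. Then $\rho>0$ and $\rho\in\mathcal{C}^1$, and running the above algebraic identities backwards shows that $(\rho,u)$ satisfies \eqref{np_ER}: multiplying the $h$-equation by $N\sigma^{-N}(h+\sigma)^{N-1}$ recovers $\partial_t\rho + u\cdot\nabla\rho + \rho(\nabla\cdot u)=0$, and multiplying the $u$-equation by $\rho$ together with the same exponent arithmetic reproduces the momentum balance in \eqref{np_ER}. Finally, one checks the neutrality constraint: condition \eqref{neu_con} for $\rho$ with $c=1$ translates through $\rho = \sigma^{-N}(h+\sigma)^N$ into \eqref{neu_con2}, so the two formulations are truly equivalent.

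I do not expect a serious obstacle here; the only points to be careful about are (i) making sure the division by $\rho$ (respectively by $(h+\sigma)^{N-1}$) is legitimate, which is exactly what the strict positivity hypotheses guarantee, and (ii) tracking the exponent arithmetic $N(\gamma-1)=2$ and $\sigma^2 = \gamma N^2$ cleanly so that the pressure term matches on both sides. The regularity statement is essentially automatic since $\rho\mapsto\sigma(\rho^{1/N}-1)$ and its inverse are $C^\infty$ on the open half-line where the respective positivity holds.
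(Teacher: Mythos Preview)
Your proposal is correct and matches the paper's approach: the paper simply says the proposition follows ``in a straightforward way'' via the explicit change of variables and refers to \cite{STW}, and you have spelled out precisely that computation. The paper additionally alludes to the method of characteristics in connection with positivity, but under the hypotheses as stated (positivity assumed on all of $\T^d\times[0,T)$) your direct algebraic observation $\rho=\sigma^{-N}(h+\sigma)^N>0$ is all that is needed.
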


In a straightforward way, we obtain the above proposition, where the positivity of the density $\rho$ is obtained from the corresponding positivity of the density  $(h + \sigma)$ by using the method of characteristics. We refer to \cite{STW} for more details. 

\subsection{Useful inequalities}
In this part, we present the Moser-type and Sobolev-type inequalities in the following lemmas. 
\begin{lemma}\label{lem_moser}
The following relations hold. 
\begin{enumerate}
\item[(i)]
	Let $k\in \mathbb{N}$. If $f, g \in H^k\cap L^{\infty}(\T^d)$ then
	\bq\label{moser}
	\|\Lambda^k (fg) \|_{L^2} \leq C_{d, k} ( \|f\|_{L^{\infty}}\|\Lambda^k g\|_{L^2} +\| g\|_{L^{\infty}}\|\Lambda^k f\|_{L^2}).
	\eq
Moreover, if $f\in \mathcal{C}^k(\T^d)$, then there exists a positive constant $C=C(k, f)$ such that
	\bq\label{chain rule}
	\|\Lambda^k (f \circ g) \|_{L^{2}} \leq C  \|g\|_{L^{\infty}}^{k-1} \|\Lambda^k g\|_{L^{2}}.
	\eq
\item[(ii)]
Let $s \in \mathbb{R}$. For any $\varepsilon>0$, there exist $C_{s, \varepsilon}>0$ depending only on $s, \varepsilon$ such that 
	\begin{equation}\label{comm}
		\|[\Lambda^s \partial_{x_i}, g] f\|_{L^2} 
		\leq C_{s, \varepsilon} (\|g\|_{H^{\frac{d}{2}+1+\varepsilon}} \|\Lambda^s f\|_{L^2} +\|g\|_{H^{\frac{d}{2}+1+s+\varepsilon}} \|f\|_{L^2})
	\end{equation}
and 
\begin{equation}\label{comm0}
	\|[\Lambda^s, g] f\|_{L^2} 
	\leq C_{s, \varepsilon} (  \| g \|_{H^{\frac{d}{2}+1+\varepsilon}} \| \Lambda^{s-1} f \|_{L^2}
	+  \|f\|_{H^{\frac{d}{2}+\varepsilon}} \| \Lambda^s g\|_{L^2}).
\end{equation}
Here, $[\cdot, \, \cdot]$  denotes the commutator operator, i.e. $[A, B]:=AB-BA$.
\end{enumerate}
\end{lemma}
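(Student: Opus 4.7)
The plan is to treat the four inequalities separately, reducing each to classical harmonic analysis on $\T^d$. For the Moser-type product estimate \eqref{moser}, I would invoke Remark \ref{rem1} to replace $\Lambda^k$ by a sum of standard multi-index derivatives $\partial^\alpha$ with $|\alpha|=k$, and then apply Leibniz. Each cross term $\partial^\beta f\,\partial^{\alpha-\beta}g$ is handled by H\"older together with the Gagliardo--Nirenberg interpolation
\[
\|\partial^\beta f\|_{L^{2k/|\beta|}} \lesssim \|f\|_{L^\infty}^{1-|\beta|/k}\|\Lambda^k f\|_{L^2}^{|\beta|/k},
\]
and the symmetric bound for $g$. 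Multiplying and summing over $\beta$ yields the claim after a Young inequality that distributes the interpolated norms onto the two endpoint terms $\|f\|_{L^\infty}\|\Lambda^k g\|_{L^2}$ and $\|g\|_{L^\infty}\|\Lambda^k f\|_{L^2}$.

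For the chain-rule estimate \eqref{chain rule}, I would induct on $k$, starting from the trivial case $k=1$. At the inductive step, the Fa\`a di Bruno formula writes $\partial^\alpha(f\circ g)$ as a sum of products $f^{(j)}(g)\,\partial^{\beta_1}g\cdots\partial^{\beta_j}g$ with $\beta_1+\cdots+\beta_j=\alpha$ and $j\leq k$. Since $f\in\mathcal{C}^k$, each $\|f^{(j)}(g)\|_{L^\infty}$ is bounded by a constant depending only on $f$ and $\|g\|_{L^\infty}$. The product of derivatives of $g$ is then controlled via iterated application of \eqref{moser} together with Gagliardo--Nirenberg interpolation, which converts a product of $j$ intermediate-order derivatives of $g$ into a factor $\|g\|_{L^\infty}^{j-1}\|\Lambda^k g\|_{L^2}$; summing over $j$ absorbs the powers of $\|g\|_{L^\infty}$ into the coefficient $C(k,f)$ and produces the stated bound.

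The two commutator inequalities in part (ii) are variants of the Kato--Ponce inequality, and the cleanest route is a Littlewood--Paley / paraproduct decomposition on $\T^d$. Writing $[\Lambda^s\partial_{x_i},g]f$ in Fourier, its symbol is $|n|^s n_i - |m|^s m_i$, where $n$ is the output frequency, $m$ is the frequency carried by $f$, and $n-m$ is the frequency carried by $g$. In the regime $|n-m|\ll |m|$ one Taylor-expands this symbol in the first variable to obtain a symbol of order $s$ in $m$ times a linear symbol in $n-m$; the former is absorbed by $\|\Lambda^s f\|_{L^2}$, while the latter contributes one derivative on $g$ controlled via the Sobolev embedding $H^{d/2+\varepsilon}\hookrightarrow L^\infty$, which gives the factor $\|g\|_{H^{d/2+1+\varepsilon}}$. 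The complementary regime $|m|\lesssim |n-m|$ distributes all $s+1$ derivatives onto $g$, producing the second term $\|g\|_{H^{d/2+1+s+\varepsilon}}\|f\|_{L^2}$. Estimate \eqref{comm0} follows from the same paraproduct splitting with one fewer derivative on each piece and the roles of $f$ and $g$ swapped in the high-high/low-high regime to expose the symmetric second term.

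The only genuinely delicate step is the commutator estimate for general real $s$ on the torus. On $\R^d$ this is textbook (Kato--Ponce, Kenig--Ponce--Vega); on $\T^d$ the Littlewood--Paley apparatus is essentially identical, the sole extra point being the zero Fourier mode of $g$, which one removes by writing $g=\bar g + (g-\bar g)$ and noting that the constant $\bar g$ commutes with every Fourier multiplier. I would therefore cite the standard Kato--Ponce literature for the paraproduct bounds rather than reproduce them, and verify only that the specific Sobolev exponents $d/2+1+\varepsilon$ and $d/2+1+s+\varepsilon$ come out correctly from the symbol analysis. The first two inequalities are, by contrast, essentially a direct computation once Gagliardo--Nirenberg and Remark \ref{rem1} are in hand.
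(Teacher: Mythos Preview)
Your treatment of part (i) matches the paper's intent: it calls \eqref{moser} classical and gives no details, and your Leibniz plus Gagliardo--Nirenberg argument is exactly the standard route. For \eqref{comm} the paper likewise just cites \cite{CCCGW}, so your paraproduct sketch is an acceptable alternative.

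Where you genuinely diverge is on \eqref{comm0}. The paper does not use Littlewood--Paley or paraproducts at all; instead it works directly on the Fourier side. It writes
\[
\widehat{[\Lambda^s,g]f}(n)=\sum_{m}\bigl(|n|^s-|n-m|^s\bigr)\hat f(n-m)\hat g(m),
\]
bounds the symbol difference by the mean value formula $|n|^s-|n-m|^s=\int_0^1 \frac{d}{dr}|rn+(1-r)(n-m)|^s\,dr$, and splits into the cases $s\geq 1$ and $s<1$ (the latter handled via the convexity of $|\cdot|^{s-1}$). This yields a pointwise bound of the form $|\widehat{\Lambda^{s-1}f}|*|\widehat{\nabla g}|+|\hat f|*|\widehat{\Lambda^s g}|$, after which Young's inequality for $\ell^1*\ell^2$ together with the elementary summability estimate $\|\hat h\|_{\ell^1}\leq C_\varepsilon\|h\|_{H^{d/2+\varepsilon}}$ produces the two terms on the right of \eqref{comm0}. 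Your paraproduct approach is correct and more systematic---it treats all real $s$ uniformly and plugs into the standard Kato--Ponce framework---but it imports more machinery and requires care in transferring the theory to $\T^d$. The paper's argument is more elementary and fully self-contained on the torus, at the cost of an ad hoc case split in $s$.
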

\begin{proof} Since the inequality \eqref{moser} is classical, we only deal with the commutator estimates appeared in (ii). The inequality \eqref{comm} is obtained in \cite{CCCGW} for the case $d=2$, and it can be readily checked that
it is also available for any $d\ge1$. On the other hand, the proof of \eqref{comm0} follows a similar way to that in \cite{CCCGW}, however, for completeness, we give details of the proof of \eqref{comm0}. 

By definition, we get
	\bq\label{fourier}
	\widehat{[\Lambda^s, g]f}(n) = \sum_{m\in \mathbb{Z}^d}  (|n|^s-|n-m|^s ) \hat{f}(n-m) \hat{g}(m), \quad n \in \Z^d.
	\eq 
We also observe  
	\bq\label{diff}
	\begin{aligned}
	 ||n|^s-|n-m|^s|  
	&= \lt| \int_0^1 \frac{d}{dr}|n r + (n-m) (1-r)|^s  \rt|  \leq |s|  \int_{0}^1 |n r +(n -m)(1-r)|^{s-1} |m|\,  dr.  \\
	\end{aligned}
	 \eq 
If $s \ge 1$, we get
\bq\nonumber
\begin{aligned}
	\lt| |n|^s-|n-m|^s \rt|  
	&\leq s \lt(|n|^{s-1}+|n-m|^{s-1} \rt)|m| \leq \max\{s, 2^{s-2}s \} (|n-m|^{s-1}|m|+|m|^{s})
\end{aligned}
\eq 
due to
$
	|n|^{s-1} \leq \max\{1, 2^{s-2}\} (|n-m|^{s-1}+|m|^{s-1}).
$ 
By combining this with \eqref{fourier}, we get
	\bq\label{fo}
	\begin{aligned}
		\lt| \widehat{[\Lambda^s, g]f}{(n)}\rt|
		&\leq 
		  \sum_{m\in \mathbb{Z}^d}  \big| |n|^s-|n-m|^s \big|\, \big|\hat{f}(n-m)\big|\, \big| \hat{g}(m)\big|  \\
		&\leq 
		C \sum_{m\in \mathbb{Z}^d} \big||n-m|^{s-1}\hat{f}(n-m)\big|\,\big| m \hat{g}(m)\big|  
		+ C  \sum_{m\in \mathbb{Z}^d}  \big|\hat{f}(n-m)\big|\, \big| |m|^s \hat{g}(m)\big| \\
		&\leq 
		C  \sum_{m\in \mathbb{Z}^d} \big|\widehat{ \Lambda^{s-1} f}(n-m)\big| \,\big|  \widehat{\nabla  g}(m)\big| 
		+ C \sum_{m\in \mathbb{Z}^d} \big|\hat{f}(n-m)\big|\, \big| \widehat{ \Lambda^s g}(m)\big|\\
		&=
		C  ( |\widehat{ \Lambda^{s-1} f}| \ast |  \widehat{\nabla  g}| )(n)
		+ C (|\hat{f}| \ast | \widehat{ \Lambda^s g}|)(n).
	\end{aligned}
	\eq
By applying the Parseval theorem, \eqref{fo} and then Young's inequality in turn, 
we deduce that 
\bq\nonumber
\begin{aligned}
\| [\Lambda^s, g]f \|_{L^2} =\| \widehat{[\Lambda^s, g]f} \|_{l^2}
&\leq 	C  \|\widehat{ \Lambda^{s-1} f} \|_{l^2} \| \widehat{\nabla  g} \|_{l^1} 
	+ C \|\hat{f}\|_{l^1} \| \widehat{ \Lambda^s g}\|_{l^2}.
\end{aligned}
\eq
It is easy to check that 
	\bq\nonumber
	\begin{aligned}
	 \| \widehat{ \Lambda^{k} f}(n) \|_{l^1} 
	=\sum_{n\in\mathbb{Z}^d} |n|^k \big|\widehat{f}(n) \big| 
	\leq \Biggl(\sum_{n\in\mathbb{Z}^d} \frac{1}{(1+|n|^2)^{\frac{d}{2}+\varepsilon}} \Biggl)^{\frac{1}{2}} \Biggl( \sum_{n\in\mathbb{Z}^d} |(1+|n|^2)|^{\frac{d}{2}+k+\varepsilon} \big| \widehat{f}(n) \big|^2 \Biggl)^{\frac{1}{2}}
	\leq C_{\varepsilon} \|f\|_{H^{\frac{d}{2}+k+\varepsilon}}
	\end{aligned}
	\eq
	for any $k>0$. Thus, we use the Parseval theorem to obtain 
	  \bq\nonumber
	  \begin{aligned}
	   \| [\Lambda^s, g]f\|_{L^2} 
	  	\leq 
	  	C_{s, \varepsilon}  \| \Lambda^{s-1} f \|_{L^2} \| g \|_{H^{\frac{d}{2}+1+\varepsilon}} 
	  	+ C_{s, \varepsilon} \|f\|_{H^{\frac{d}{2}+\varepsilon}} \| \Lambda^s g\|_{L^2}.
	  \end{aligned}
	  \eq
On the other hand, for $s<1$, from \eqref{diff} we see that
	\bq\nonumber
\begin{aligned}
	\lt| \widehat{[\Lambda^s, g]f}{(n)}\rt|
	&\leq 
	\sum_{m\in \mathbb{Z}^d}  \big| |n|^s-|n-m|^s \big|\, \big|\hat{f}(n-m)\big|\, \big| \hat{g}(m)\big|\\
	& \leq  \sum_{m\in \mathbb{Z}^d}  
	|s|  \int_{0}^1 |n r +(n -m)(1-r)|^{s-1} \, \big|\hat{f}(n-m)\big|\, \big| m \hat{g}(m)\big| \,dr.
\end{aligned}
\eq
Here we note for $r\in [0, 1]$ that 
\bq\nonumber
\begin{aligned}
|nr +(n-m)(1-r)|^{s-1} 
&= |n-m + rm |^{s-1} \\
&=(1+r)^{s-1} \Big| \frac{n-m}{1+r} + \frac{mr }{1+r} \, \Big|^{s-1}\\
&\leq (1+r)^{s-1}  \lt( \frac{|n-m|^{s-1}}{1+r} + \frac{|m|^{s-1}r}{1+r} \, \rt)\\
&= (1+r)^{s-2}  \lt( |n-m|^{s-1} + |m|^{s-1}r \, \rt)
\end{aligned}
\eq 
from the fact that $|\cdot|^{s-1}$ is convex for $s<1$.
Then it follows that 
\bq\nonumber
\begin{aligned}
	\lt|\widehat{[\Lambda^s, g]f}{(n)}\rt|
	& \leq  \sum_{m\in \mathbb{Z}^d}  
	|s|  \, \big| |n-m|^{s-1} \hat{f}(n-m)\big|\, \big| m \hat{g}(m)\big|  \int_{0}^1 (1+r)^{s-2}\,dr \\
	&\quad + \sum_{m\in \mathbb{Z}^d}  
	|s|  \, \big|\hat{f}(n-m)\big|\, \big| |m|^s \hat{g}(m)\big|  \int_{0}^1 r(1+r)^{s-2}\,dr, 
\end{aligned}
\eq
which implies the second row in \eqref{fo}. This completes the proof.
\end{proof}

We next present some useful Sobolev type inequality on $\T^d$.
\begin{lemma}\label{Sobolev_T}
Let $s_1, s_2 \in \mathbb{R}$.
If $0< s_1\leq s_2$, then we have
	\bq\label{sobolev_t}
	\|\Lambda^{s_1} f\|_{L^2} \leq \|\Lambda^{s_2}f \|_{L^2}.
	\eq 	
Moreover, if  
$\int  f\, dx =0$, then \eqref{sobolev_t} holds for any $s_1\leq s_2$.
\end{lemma}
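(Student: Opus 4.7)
The plan is to pass to the Fourier side and reduce the inequality to a modewise comparison. By Parseval's theorem applied on $\T^d$,
\[
\|\Lambda^{s} f\|_{L^2}^2 = \sum_{n \in \Z^d \setminus\{0\}} |n|^{2s} |\hat{f}(n)|^2 + (\text{contribution of } n=0),
\]
where the $n=0$ term is $0$ in the convention $|0|^{s} := 0$ adopted for $s>0$. The crucial observation is that for every nonzero $n \in \Z^d$ one has $|n| \geq 1$, so $s \mapsto |n|^{s}$ is nondecreasing on $\R$. Hence, whenever $s_1 \leq s_2$ and $n \neq 0$, we have $|n|^{2s_1} \leq |n|^{2s_2}$.

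For the first assertion, with $0 < s_1 \leq s_2$, the zero-frequency term contributes $0$ on both sides by the above convention, and summing the modewise inequality over $n \neq 0$ immediately yields $\|\Lambda^{s_1} f\|_{L^2}^2 \leq \|\Lambda^{s_2} f\|_{L^2}^2$. For the moreover part, the mean-zero hypothesis $\int f\,dx = 0$ gives $\hat{f}(0) = 0$, so the $n = 0$ Fourier coefficient vanishes irrespective of the sign of $s_j$; the same modewise comparison then goes through for arbitrary $s_1 \leq s_2$. The only delicate point — which is the reason for the two-case formulation — is precisely the behavior at $n=0$, since for $s_j \leq 0$ the symbol $|n|^{2s_j}$ is singular there; both hypotheses ($s_1 > 0$, or $\hat f(0)=0$) are tailored to eliminate that issue.
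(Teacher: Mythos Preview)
Your argument is correct and is essentially identical to the paper's: both pass to Fourier coefficients via Parseval and use the modewise inequality $|n|^{2s_1}\le |n|^{2s_2}$ for $n\in\Z^d\setminus\{0\}$, handling the $n=0$ term either via $s_1>0$ or via $\hat f(0)=0$. If anything, you are slightly more explicit than the paper about why the zero mode causes no trouble.
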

\begin{proof}
The inequality \eqref{sobolev_t} can be easily obtained as follows:
\bq\label{so_t}
\|\Lambda^{s_1} f\|_{L^2}^2
= \sum_{n\in \mathbb{Z}^d} |n|^{2(s_1-s_2)} |n|^{2s_2} |\hat f(n)|^2 
\leq \sum_{n\in \mathbb{Z}^d} |n|^{2s_2}  |\hat f(n)|^2 
= \| \Lambda^{s_2}f \|_{L^2}^2.
\eq
If $\int  f\, dx =0$, then
the case $s_1=0$ immediately follows from the Poincar\'e inequality.
Moreover, $\int  f\, dx =0$ implies $\hat{f}(n)=0$ when $n=0$ and then 
$\sum_{n \in \mathbb{Z}^d}$ in \eqref{so_t} can be replaced by $\sum_{n \in \mathbb{Z}^d/\{0\}}$
so that \eqref{so_t} is valid for any $s_1\leq s_2$.  
\end{proof}

%
%
%
%
%

\section{A priori estimates}\label{sec:3}
\setcounter{equation}{0}

In this section, we provide a priori estimates of solutions for the system \eqref{np_ER2}-\eqref{ini_ER2} in $\mathcal{C}([0, T); H^m(\T^d) \times H^m(\T^d))$. 

We start with $L^2$-estimates for the pair $(h, u)$.
\begin{lemma}\label{zero_u}
	Let $d\ge1$, $\gamma > 1$, and $m>1+d/2$ with $m \in \N$. For $T>0$, let $(h, u)\in \mathcal{C}([0, T); H^m(\T^d) \times H^m(\T^d))$ be a classical solution to system \eqref{np_ER2} satisfying
	\bq\label{as}
	\sup_{0\leq t\leq T}\|h(t)\|_{L^\infty} \leq \frac{\sigma}{2}.
	\eq 
	Then we have
	\bq\nonumber
	\begin{aligned}
&\frac{1}{2}\frac{d}{dt} \left( \int |h|^2+|u|^2\, dx\right)
+\nu \int |u|^2 \, dx\cr
&\quad \leq \lt(C\|h\|_{H^m} + C\|u\|_{H^m}+ C_0 \lambda \rt)\|u\|_{L^2}^2 +\lt(C\|h\|_{H^m}+ C_0 \lambda  \rt)\|\nabla h\|_{L^2}^2,
	\end{aligned}
	\eq 
where 
\bq\label{c_0}
C_0= C_d (2\sigma)^{-1} N \max \{2^{-(N-1)}, 2^{N-1} \}
\eq
with $C_d > 1$.
Here, $C$ is a positive constant depending on $ m$ and $\gamma$.
\end{lemma}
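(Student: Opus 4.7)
The plan is to carry out an $L^2$ energy estimate on the symmetric reformulation \eqref{np_ER2}, exploit the cancellation built into its symmetric form, and then carefully estimate the Riesz contribution using the neutrality condition \eqref{neu_con2}.

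First I would multiply the $h$-equation by $h$ and the $u$-equation by $u$, integrate over $\T^d$, and add. The convective terms collapse via integration by parts to
\[
\int (u\cdot\nabla h)\,h\,dx = -\tfrac12\int |h|^2 \nabla\cdot u\,dx, \qquad \int (u\cdot\nabla u)\cdot u\,dx = -\tfrac12\int |u|^2 \nabla\cdot u\,dx.
\]
The symmetric cross terms combine, after a single integration by parts, into
\[
\tfrac1N\int (h+\sigma)\,h\,\nabla\cdot u\,dx + \tfrac1N\int (h+\sigma)\nabla h\cdot u\,dx = \tfrac{1}{2N}\int |h|^2 \nabla\cdot u\,dx,
\]
so that the potentially dangerous linear contributions proportional to $\sigma$ cancel exactly. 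Combining these identities yields
\[
\tfrac12 \frac{d}{dt}\int(|h|^2+|u|^2)\,dx + \nu\int |u|^2\,dx = \tfrac12\int |u|^2 \nabla\cdot u\,dx + \tfrac{N-1}{2N}\int |h|^2 \nabla\cdot u\,dx + \mathcal{I},
\]
with $\mathcal{I}$ denoting the Riesz interaction term.

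For the two residual cubic terms, the Sobolev embedding $H^{m-1}(\T^d)\hookrightarrow L^\infty(\T^d)$ (valid since $m > 1+d/2$) gives $\|\nabla u\|_{L^\infty} \leq C\|u\|_{H^m}$ and $\|h\|_{L^\infty} \leq C\|h\|_{H^m}$. Hence $|\int |u|^2 \nabla\cdot u\,dx| \leq C\|u\|_{H^m}\|u\|_{L^2}^2$, and integrating by parts once more, $\int |h|^2 \nabla\cdot u\,dx = -2\int h(\nabla h)\cdot u\,dx$, so after Young's inequality this contribution is bounded by $C\|h\|_{H^m}(\|u\|_{L^2}^2 + \|\nabla h\|_{L^2}^2)$.

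The central step is estimating $\mathcal{I} = \sigma^{-N}\lambda\int \nabla\Lambda^{\alpha-d}\{(h+\sigma)^N-\sigma^N\}\cdot u\,dx$. By Cauchy-Schwarz and Parseval,
\[
|\mathcal{I}| \leq \sigma^{-N}\lambda\,\|u\|_{L^2}\,\|\Lambda^{\alpha-d+1}\{(h+\sigma)^N-\sigma^N\}\|_{L^2}.
\]
The neutrality condition \eqref{neu_con2} forces $(h+\sigma)^N-\sigma^N$ to have zero mean, so Lemma \ref{Sobolev_T} applies with $s_1 = \alpha-d+1 < 1 = s_2$, yielding
\[
\|\Lambda^{\alpha-d+1}\{(h+\sigma)^N-\sigma^N\}\|_{L^2} \leq C_d \|\nabla\{(h+\sigma)^N\}\|_{L^2} = C_d N \|(h+\sigma)^{N-1}\nabla h\|_{L^2}.
\]
Under \eqref{as}, $h+\sigma\in[\sigma/2, 3\sigma/2]$, so $(h+\sigma)^{N-1}\leq \sigma^{N-1}\max\{2^{-(N-1)},2^{N-1}\}$ (bounding $(3/2)^{N-1}$ by $2^{N-1}$), and Young's inequality $ab\leq\tfrac12 a^2+\tfrac12 b^2$ packages this into $C_0\lambda(\|u\|_{L^2}^2+\|\nabla h\|_{L^2}^2)$ with $C_0$ as in \eqref{c_0}. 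Adding everything together gives the claim. The only delicate point is that the $\lambda$-contribution must involve $\|\nabla h\|_{L^2}^2$ rather than $\|h\|_{L^2}^2$, which will be essential for the hypocoercivity argument later; this is precisely what forces us to invoke the mean-zero property of $(h+\sigma)^N-\sigma^N$ through Lemma \ref{Sobolev_T}, rather than a cruder sub-Manev-only estimate that would produce an $\|h\|_{L^2}^2$ term.
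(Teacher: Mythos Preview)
Your proof is correct and follows essentially the same approach as the paper's. The only cosmetic differences are: (i) you first combine the convective and symmetric cross terms into the clean identity with coefficients $\tfrac12$ and $\tfrac{N-1}{2N}$, while the paper keeps the four pieces $\mathsf{I}_1,\dots,\mathsf{I}_4$ separate and bounds them individually; and (ii) for the Riesz term you apply Lemma~\ref{Sobolev_T} to $(h+\sigma)^N-\sigma^N$ itself (invoking the neutrality condition \eqref{neu_con2} to secure zero mean), whereas the paper applies it to the gradient $\nabla\{(h+\sigma)^N-\sigma^N\}$, which has zero mean automatically on $\T^d$. Both routes yield the same bound $2C_0\lambda\|u\|_{L^2}\|\nabla h\|_{L^2}$, and the remainder of the argument is identical.
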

\begin{proof}
We first see that 
\bq\nonumber
\begin{aligned}
&\frac{1}{2}\frac{d}{dt} \left( \int |h|^2 +|u|^2 \, dx\right)  +\nu \int |u|^2\, dx\cr
&\quad = -\int h u\cdot \nabla h\, dx -\int |u|^2 \nabla \cdot u\, dx -\frac{1}{N} \int h(h+\sigma) (\nabla \cdot u)\, dx -\frac{1}{N}\int u \cdot (h+\sigma) \nabla h\, dx \\
&\qquad +\lambda \sigma^{-N} \int u \cdot \nabla \Lambda^{\alpha-d}  \lt\{ (h+ \sigma )^{N} - \sigma^N  \rt\}  dx\\
&\quad =:\sum_{i=1}^4 \sfI_i+\mathcal{R}_0,
\end{aligned}
\eq
where we estimate
\bq\nonumber
\sfI_1 \leq \|h\|_{L^{\infty}} \|u\|_{L^2}\|\nabla h\|_{L^2} \quad \text{and} \quad \sfI_2  \leq \|\nabla \cdot u\|_{L^{\infty}} \|u\|_{L^2}^2.
\eq 
By using the integration by parts, we also get
\bq\nonumber
\begin{aligned}
\sfI_3 + \sfI_4 
=\frac{1}{N}\int h \nabla h  \cdot u\, dx \leq \frac{1}{N} \|h\|_{L^{\infty}} \| \nabla h\|_{L^2} \|u\|_{L^2}. 
\end{aligned}
\eq 
To control the term $\mathcal{R}_0$, we notice that  \eqref{as} implies 
\[
\begin{aligned}
	\frac{\sigma}{2} \leq	h(t, x)+ \sigma \leq \frac{3\sigma}{2} \quad \mbox{for all } (x, t) \in \T^d \times [0,T].
\end{aligned}
\] 
This, together with Lemma \ref{Sobolev_T} for $\alpha-d\leq 0$, gives
\begin{align*}
 \mathcal{R}_0 
&\leq \lambda \sigma^{-N}\|u \|_{L^2} \| \Lambda^{\alpha-d} \nabla   \{ (h+ \sigma )^{N} - \sigma^{N} \} \|_{L^2}\\
&\leq \lambda \sigma^{-N}\|u \|_{L^2} \| \nabla  \{ (h+ \sigma )^{N} - \sigma^N \} \|_{L^2}\\
&=  \lambda \sigma^{-N} N\|u \|_{L^2} \|(h+\sigma)^{N-1} \nabla h \|_{L^2}\\
&\leq  \sigma^{-1} N \max \{2^{-(N-1)}, 2^{N-1} \} \lambda  \|u \|_{L^2} \| \nabla h \|_{L^2}\\
&\leq  2 C_0 \lambda  \|u \|_{L^2} \| \nabla h \|_{L^2}
\end{align*}
for $C_0>0$ given as in \eqref{c_0}
Then, we use Young's inequality to deduce
\bq\nonumber
\begin{aligned}
&\frac{1}{2}\frac{d}{dt} \left( \int |h|^2 +|u|^2 \, dx\right)
+\nu \int |u|^2 \, dx\\
& \quad \leq 
C\|h\|_{L^{\infty}}\|\nabla h\|_{L^2}\|u\|_{L^2}
+ \|\nabla \cdot u\|_{L^{\infty}}\|u\|_{L^2}^2   +  2C_0 \lambda \|u\|_{L^2} \|\nabla h\|_{L^2}\\
& \quad \leq 
\lt(C\|h\|_{L^{\infty}} + \|\nabla \cdot u\|_{L^{\infty}}+ C_0 \lambda \rt)\|u\|_{L^2}^2
+\lt(C\|h\|_{L^{\infty}}+ C_0 \lambda \rt)\|\nabla h\|_{L^2}^2, 	
\end{aligned}
\eq 
where $C$ is a positive constant depending only on $\gamma$.
Finally, we use
\bq\nonumber
\|\nabla \cdot u\|_{L^{\infty}} 
\leq C_m\|u\|_{H^{m}},\quad
\|h\|_{L^{\infty}} \leq C_m\|h\|_{H^{m-1}} \leq C_m\|h\|_{H^{m}}
\eq
to obtain the desired estimates.
\end{proof}

Before proceeding further, we present a technical lemma and a remark on the derivative estimates of $h$.
\begin{lemma}\label{tool}
	Let $d\ge1$ and $m>d/2+1$. 
	For given $T>0$ and $\sigma>0$, let $h\in \mathcal{C}([0, T); H^m(\T^d) )$ with
	\[
	\sup_{0\leq t\leq T}\|h(t)\|_{L^\infty} \leq \frac{\sigma}{2}.
	\]
	Then there exists a positive constant $C_{d,k}$ such that
	\bq\label{chain2}
	\|\Lambda^k (h+\sigma)^\beta\|_{L^2} \leq  
	\max\lt\{ (\sigma/2)^{\beta-1}, (2\sigma)^{\beta-1}\rt\} |\beta| \|\Lambda^k h\|_{L^2} + C_{d,k}(1-\delta_{1,k}) \|\Lambda^{k} h\|_{L^2}\|h\|_{H^m}(1+\|h\|_{H^m})^{k-2}
	\eq 
	for any integer $k\leq m$ and $\beta \in \mathbb{R}$, where $\delta_{ij}$ denotes the Kronecker's delta.
\end{lemma}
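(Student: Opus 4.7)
My plan is to split the argument into the base case $k=1$ and a Faà di Bruno / Gagliardo--Nirenberg argument for $k\geq 2$, exploiting the exact Parseval identity
\[
\|\Lambda^k f\|_{L^2}^2 = \sum_{|\alpha|=k} \binom{k}{\alpha}\|\partial^\alpha f\|_{L^2}^2,
\]
which refines Remark \ref{rem1} (one expands $|n|^{2k}=(\sum_j n_j^2)^k$ by the multinomial theorem). Throughout, the hypothesis $\|h\|_{L^\infty}\leq \sigma/2$ confines $h+\sigma$ to $[\sigma/2,3\sigma/2]\subset[\sigma/2,2\sigma]$, so regardless of the sign of $\beta-j$ one has the clean pointwise bound $(h+\sigma)^{\beta-j}\leq \max\{(\sigma/2)^{\beta-j},(2\sigma)^{\beta-j}\}$.

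For $k=1$ the claim reduces to the pointwise identity $\nabla(h+\sigma)^\beta=\beta(h+\sigma)^{\beta-1}\nabla h$ together with the above bound (using $\|\Lambda h\|_{L^2}=\|\nabla h\|_{L^2}$), the factor $(1-\delta_{1,k})$ erasing the remainder term.

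For $k\geq 2$, I apply Faà di Bruno's formula to each multi-index $|\alpha|=k$:
\[
\partial^\alpha (h+\sigma)^\beta = \beta(h+\sigma)^{\beta-1}\partial^\alpha h + R_\alpha,
\]
where $R_\alpha$ is a finite linear combination of monomials
$\beta(\beta-1)\cdots(\beta-j+1)(h+\sigma)^{\beta-j}\,\partial^{\gamma_1}h\cdots\partial^{\gamma_j}h$
with $2\leq j\leq k$, $|\gamma_i|\geq 1$, and $\sum_i|\gamma_i|=k$. Substituting into the binomial Parseval identity and using the triangle inequality on the weighted $\ell^2_\alpha$-sum, the main term reassembles exactly to $|\beta|\,\|(h+\sigma)^{\beta-1}\|_{L^\infty}\,\|\Lambda^k h\|_{L^2}$, delivering the first term of \eqref{chain2} sharply.

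The main obstacle is estimating each product $\|\prod_{i=1}^j\partial^{\gamma_i}h\|_{L^2}$ appearing in $R_\alpha$. I plan to use H\"older with exponents $p_i=2k/|\gamma_i|$ (so that $\sum 1/p_i=1/2$) together with the Gagliardo--Nirenberg interpolation
\[
\|\partial^{\gamma_i} h\|_{L^{p_i}} \leq C\|h\|_{L^\infty}^{1-|\gamma_i|/k}\|\Lambda^k h\|_{L^2}^{|\gamma_i|/k},
\]
valid on $\T^d$ after reducing to the zero-mean part of $h$ (legitimate because $|\gamma_i|\geq 1$ kills any constant mode, while the mean is absorbed into the $L^\infty$ factor). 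The relation $\sum_i(1-|\gamma_i|/k)=j-1$ collapses the product to $C\|h\|_{L^\infty}^{j-1}\|\Lambda^k h\|_{L^2}$. Combining with the pointwise bound on $(h+\sigma)^{\beta-j}$ and the Sobolev embedding $\|h\|_{L^\infty}\leq C\|h\|_{H^m}$ (available since $m>d/2+1$), and summing over $j=2,\ldots,k$, produces a polynomial in $\|h\|_{H^m}$ of degree $k-1$ whose lowest-order term is linear in $\|h\|_{H^m}$, which is majorized by $C_{d,k}\|h\|_{H^m}(1+\|h\|_{H^m})^{k-2}$. The delicate bookkeeping in enumerating Faà di Bruno terms, combined with the correct torus-formulation of Gagliardo--Nirenberg, is where the technical burden lies.
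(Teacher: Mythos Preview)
Your proof is correct and takes a genuinely different route from the paper. The paper argues by induction on $k$: at the step $k\to k+1$ it writes
\[
\nabla^{k+1}(h+\sigma)^\beta=\beta(h+\sigma)^{\beta-1}\nabla^{k+1}h+\beta[\nabla^k,(h+\sigma)^{\beta-1}]\nabla h,
\]
controls the commutator by the Moser-type estimate from Lemma \ref{lem_moser}, and applies the induction hypothesis with exponent $\beta-1$ to the term $\|\nabla^k(h+\sigma)^{\beta-1}\|_{L^2}$. You instead expand the full composition in one shot via Fa\`a di Bruno and handle each remainder product $\prod_i\partial^{\gamma_i}h$ by H\"older with exponents $2k/|\gamma_i|$ plus the endpoint Gagliardo--Nirenberg interpolation $\|\partial^{\gamma}h\|_{L^{2k/|\gamma|}}\leq C\|h\|_{L^\infty}^{1-|\gamma|/k}\|\Lambda^k h\|_{L^2}^{|\gamma|/k}$. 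The paper's argument is more self-contained, staying entirely within the product and commutator estimates already recorded in Lemma \ref{lem_moser}, and the induction makes the growth of the factor $(1+\|h\|_{H^m})^{k-2}$ transparent step by step; your approach avoids induction and exposes the polynomial structure in $\|h\|_{H^m}$ directly, at the cost of importing Gagliardo--Nirenberg (which the paper never invokes) and the bookkeeping of multivariate Fa\`a di Bruno. Both methods leave the remainder constant implicitly dependent on $\beta$ and $\sigma$ through the falling factorials and the bounds on $(h+\sigma)^{\beta-j}$, a dependence the paper's notation $C_{d,k}$ elides as well.
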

\begin{proof}
We proceed by induction. When $k=1$, we directly have
\[
\|\nabla (h+\sigma)^\beta\|_{L^2} \le \max\lt\{ (\sigma/2)^{\beta-1}, (2\sigma)^{\beta-1}\rt\}|\beta| \|\nabla h\|_{L^2}.
\]
Now, suppose that \eqref{chain2} holds for $ k \ge 1$. Then
\[
\nabla^{k+1} (h+\sigma)^\beta = \beta (h+\sigma)^{\beta-1} \nabla^{k+1} h + \beta[\nabla^k, (h+\sigma)^{\beta-1}] \nabla h.
\]
Thus, we use Lemma \ref{lem_moser} and induction hypothesis to deduce
\bq\nonumber
\begin{aligned}
	\|\nabla^{k+1} (h+\sigma)^\beta \|_{L^2} 
	&\leq  \max\lt\{ (\sigma/2)^{\beta-1}, (2\sigma)^{\beta-1}\rt\} |\beta| \|\nabla^{k+1} h\|_{L^2} + C\|[\nabla^k, (h+\sigma)^{\beta-1}] \nabla h\|_{L^2} \\
	&\leq \max\lt\{ (\sigma/2)^{\beta-1}, (2\sigma)^{\beta-1}\rt\} |\beta| \|\nabla^{k+1} h\|_{L^2}   \\
	&\quad +C\lt(\|\nabla h\|_{L^\infty}\|\nabla^k(h+\sigma)^{\beta-1}\|_{L^2} + \|\nabla (h+\sigma)^{\beta-1}\|_{L^\infty}\|\nabla^{k}h\|_{L^2} \rt)\\
	&\leq \max\lt\{ (\sigma/2)^{\beta-1}, (2\sigma)^{\beta-1}\rt\} |\beta| \|\nabla^{k+1} h\|_{L^2} + C \|\Lambda^{k+1} h\|_{L^2}\|h\|_{H^m}(1+\|h\|_{H^m})^{k-1},
	\end{aligned}
\eq 
and this yields the desired estimate.
\end{proof}

\begin{remark}\label{rem2}
Suppose that 
	\[
	\sup_{0\leq t\leq T}\|h(t)\|_{L^\infty} \leq \frac{\sigma}{2}
	\]
	holds  for some $T>0$. Then we find
	\[
	\|(h+\sigma)^n - \sigma^n\|_{L^2} \sim \|h\|_{L^2}
	\]
	for any $n > 0$. 
	Since Lemma \ref{Sobolev_T} implies
	\[
	\|h\|_{H^m} \sim \|h\|_{L^2} + \|\Lambda^m h\|_{L^2},
	\] 
	with the help of Poincar\'e inequality, we obtain
	\[\begin{aligned}
	\|h\|_{H^m} &\sim \|(h+\sigma)^N - \sigma^N\|_{L^2} + \|\Lambda^m h\|_{L^2} \\
	&\lesssim \|\nabla (h+\sigma)^N\|_{L^2}+ \|\Lambda^m h\|_{L^2}\\
	&\lesssim \|\nabla h\|_{L^2}+ \|\Lambda^m h\|_{L^2} \\
	&\lesssim \|\Lambda^m h\|_{L^2} 
	\end{aligned}\]
	for $t \in [0, T]$. This shows
	\[
	\|h\|_{H^m }\sim \|\Lambda^m h\|_{L^2}.
	\]
\end{remark}

Now, we provide the estimates of higher-order derivatives of solutions.  Thanks to Remark \ref{rem2}, it suffices to deal with the highest order.
\begin{proposition}\label{higher_u}
Let the assumptions of Lemma \ref{zero_u} be satisfied. Then, depending on $\alpha$, we have the following estimates:
\begin{itemize}
	\item[(i)] (Sub-Manev potential case) If $-2\leq \alpha-d\leq -1$, then we have
\bq\label{h1}
\begin{aligned}
	&\frac{1}{2} \frac{d}{dt}\left(
	\int |\Lambda^{m} h|^2 \, dx+ \int  |\Lambda^{m} u|^2\, dx\right) +\nu \int  |\Lambda^{m} u|^2 \, dx \\
	&\quad \leq ( C\|h\|_{H^m}+ C\|u\|_{H^m} + C_0\lambda +C \lambda \|h\|_{H^m}(1+\|h\|_{H^m})^{m-2}) \|\Lambda^{m} u\|_{L^2}^2\\ 
	&\qquad +\lt(C \|h\|_{H^m}+ C\|u\|_{H^m}+ C_0\lambda +C \lambda \|h\|_{H^m}(1+\|h\|_{H^m})^{m-2}\rt)\|\Lambda^{m} h\|_{L^2}^2,
\end{aligned}
\eq 
where 
\[
C_0 := C_d (2\sigma)^{-1} N \max \{2^{-(N-1)}, 2^{N-1} \}  \quad \mbox{with} \quad C_d>1.
\]
\item[(ii)] (Super-Manev potential case) If $-1<\alpha-d <0$, then we have
\bq\label{h2}
\begin{aligned}
	&\frac{1}{2} \frac{d}{dt}\left(
	\int |\Lambda^{m} h|^2 \, dx+ \sum_{k=0}^{k_0}\tilde{\lambda}^k \int  (h+\sigma)^{k(N-2)} |\Lambda^{m-kl} u|^2\, dx\right) +\nu \int  |\Lambda^{m} u|^2 \, dx \\
	&  \leq  C\lt((1+\hat\lambda)(\|u\|_{H^m}+\|h\|_{H^m})+\hat\lambda \|u\|_{H^m}\|h\|_{H^m}+\lambda^{k_0+1}\|h \|_{H^m}(1+\|h\|_{H^m})^{m-2} \rt)   \| \Lambda^m u\|_{L^2}^2 \\
	&\quad 
	 +	C \lt( \|u\|_{H^m}+   \|h\|_{H^m} +  \hat\lambda \|h\|_{H^m} +\lambda^{k_0+1}(1+\|h\|_{H^m})^{m-2}  \|h\|_{H^m}) \rt)  \|\Lambda^m  h\|_{L^2}^2 \\
	 &\quad + C_{1}\lambda^{k_0 + 1}  \| \Lambda^m u\|_{L^2}^2 +C_{1}\lambda^{k_0 + 1}\|\Lambda^m  h\|_{L^2}^2 
\end{aligned}
\eq 
where 
\[
\begin{aligned}
C_{1} := C_d  \sigma^{-(2k_0 +1)} &N^{2k_0+1}  \max\{ 2^{-k_0 (N-2)-N}, 2^{-(k_0-1) (N-2)},  2^{(k_0+1) (N-2)}\},\\
& \tilde{\lambda} := \lambda \sigma^{-N} N^2,\quad \hat\lambda  := \sum_{k=1}^{k_0} \lambda^k,
\end{aligned}
\] 
and $k_0\in \mathbb{N}$ is a minimum value of $k$ such that 
\[
\max\lt\{ 1,\,  \frac2{d-\alpha} -2 \rt\} \leq k < \frac{2m}{d-\alpha}.
\]
\end{itemize}
Here $C=C(\gamma, m, \alpha, d)$ is a positive constant.
\end{proposition}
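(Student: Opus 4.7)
The approach is to apply $\Lambda^m$ to both equations in \eqref{np_ER2}, test the continuity equation against $\Lambda^m h$ and the momentum equation against $\Lambda^m u$, integrate over $\T^d$, and sum. The damping term provides the dissipation $\nu\|\Lambda^m u\|_{L^2}^2$ appearing on the left-hand side of \eqref{h1}--\eqref{h2}. The remaining contributions split into (a) transport terms from $u\cdot\nabla h$ and $u\cdot\nabla u$, (b) pressure-type terms from the $\tfrac{1}{N}(h+\sigma)$ factors, whose top-order pieces must cancel by the symmetric structure of \eqref{np_ER2}, and (c) the Riesz contribution
\[
\mathcal{R}_m(0):= \lambda\sigma^{-N}\int \Lambda^m u\cdot \Lambda^m \nabla\Lambda^{\alpha-d}\{(h+\sigma)^N - \sigma^N\}\,dx,
\]
which is where the sub-Manev and super-Manev cases diverge.

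For the transport part, the splitting $\Lambda^m(u\cdot\nabla f) = u\cdot\nabla\Lambda^m f + [\Lambda^m, u]\cdot\nabla f$ reduces the first piece to $-\tfrac{1}{2}\int(\nabla\cdot u)|\Lambda^m f|^2\,dx$ after integration by parts, while the commutator is controlled by the Moser estimate \eqref{moser} and the Sobolev embedding $H^m\hookrightarrow W^{1,\infty}$ (valid since $m>1+d/2$). For the pressure part, an integration by parts yields the exact cancellation
\[
\tfrac{1}{N}\int (h+\sigma)\bigl(\Lambda^m h\,\nabla\cdot\Lambda^m u + \Lambda^m u\cdot\nabla\Lambda^m h\bigr)\,dx = -\tfrac{1}{N}\int \nabla h\cdot \Lambda^m u\,\Lambda^m h\,dx,
\]
which is harmless by Sobolev embedding, and the residual commutators $[\Lambda^m,h]\nabla h$ and $[\Lambda^m,h]\nabla\cdot u$ are bounded through \eqref{comm0}. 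Collectively, (a) and (b) contribute at most $C(\|u\|_{H^m}+\|h\|_{H^m})(\|\Lambda^m u\|_{L^2}^2+\|\Lambda^m h\|_{L^2}^2)$.

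In the sub-Manev case $-2\leq \alpha-d\leq -1$, the operator $\Lambda^m\nabla\Lambda^{\alpha-d}$ has order $m+1+\alpha-d\in[m-1,m]$, so Cauchy--Schwarz combined with Lemma \ref{Sobolev_T} (applicable thanks to \eqref{neu_con2}) gives
\[
\mathcal{R}_m(0)\lesssim \lambda\sigma^{-N}\|\Lambda^m u\|_{L^2}\,\|\Lambda^m\{(h+\sigma)^N -\sigma^N\}\|_{L^2}.
\]
Splitting $(h+\sigma)^N - \sigma^N = N\sigma^{N-1}h + (\text{quadratic in }h)$ isolates a linear-in-$h$ piece whose coefficient, after bounding $(h+\sigma)^{N-1}$ via \eqref{as} by $\max\{(\sigma/2)^{N-1},(3\sigma/2)^{N-1}\}$, produces precisely the $C_0\lambda$ term of \eqref{h1}; the higher-order remainder is absorbed via Lemma \ref{tool} into the $C\lambda\|h\|_{H^m}(1+\|h\|_{H^m})^{m-2}$ factor. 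Young's inequality then closes \eqref{h1}.

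The main obstacle is the super-Manev case $-1<\alpha-d<0$, where $m+1+\alpha-d>m$ and the direct bound fails. The plan is to iteratively reduce the effective order of derivatives via the momentum equation: rewriting
\[
\sigma^{-N}\lambda\nabla\Lambda^{\alpha-d}\{(h+\sigma)^N - \sigma^N\} = \partial_t u + u\cdot\nabla u + \nu u + \tfrac{1}{N}(h+\sigma)\nabla h,
\]
one expresses $\nabla\{(h+\sigma)^N - \sigma^N\} = N(h+\sigma)^{N-1}\nabla h$ as $N^2(h+\sigma)^{N-2}$ times the right-hand side above. Substituting into $\mathcal{R}_m(k)$ and redistributing the $\Lambda^{-2l}$ smoothing with $l=(d-\alpha)/2$ (using self-adjointness and Lemma \ref{Sobolev_T}, with commutators between $\Lambda^{m-kl}$ and the weight $(h+\sigma)^{k(N-2)}$ handled via Lemma \ref{tool}), the $\partial_t u$ contribution becomes a time derivative $-\tfrac{\tilde\lambda^{k+1}}{2}\tfrac{d}{dt}\int (h+\sigma)^{(k+1)(N-2)}|\Lambda^{m-(k+1)l}u|^2\,dx$, the Riesz contribution becomes $\tilde\lambda\,\mathcal{R}_m(k+1)$, and the remaining pieces are absorbed into $\|(h,u)\|_{H^m}^2$ with an extra $H^m$-polynomial factor. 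Iterating from $k=0$ up to $k_0$, chosen so that $(k_0+2)l\geq 1$, makes the terminal $\mathcal{R}_m(k_0+1)$ carry an outer operator of order at most $m$ and hence directly bounded by $C_1\lambda^{k_0+1}\|(h,u)\|_{H^m}^2$ through the sub-Manev-type argument. Transferring the accumulated time derivatives to the left-hand side produces the weighted $L^2$ norm in \eqref{h2}. The delicate points are handling $\partial_t(h+\sigma)^{k(N-2)}$ via the continuity equation when $\partial_t$ crosses the weight, tracking the commutator errors from Lemma \ref{tool}, and assembling the numerical factors into the advertised constants $C_1$, $\tilde\lambda$, and $\hat\lambda$.
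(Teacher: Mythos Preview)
Your proposal is correct and mirrors the paper's proof: parts (a)--(b) produce exactly the common estimate \eqref{s2}, the sub-Manev bound on $\mathcal{R}_m(0)$ is \eqref{ellp_good}, and your super-Manev iteration is precisely the combination of Lemmas~\ref{Gap} and~\ref{higher_it} (stated separately in the paper and then assembled into \eqref{R_bad}--\eqref{ellp_bad}). One minor indexing slip: with $(k_0+2)l\geq 1$ the iteration terminates at $\mathcal{R}_m(k_0)$, not $\mathcal{R}_m(k_0+1)$, and that terminal term is bounded directly as in \eqref{R2}; also, the commutators with the weight $(h+\sigma)^{k(N-2)}$ are handled via the estimates in Lemma~\ref{lem_moser} rather than Lemma~\ref{tool}.
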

 
For the proof of Proposition \ref{higher_u}, a crucial part is to control the following Riesz interaction term:
\[
 \lambda \sigma^{-N} \int \Lambda^m u \cdot \Lambda^m \nabla \Lambda^{\alpha-d} \{ (h+\sigma)^{N} -\sigma^{N}\} \, dx.
\]
As mentioned in Section \ref{ssec_gc}, when $-2<\alpha-d\leq -1$ (sub-Manev potential case), the total derivative order of the operator $\Lambda^{m+\alpha-d} \nabla$ is roughly less than or equal to $m$, and hence,  using Lemma \ref{Sobolev_T} and \ref{tool} gives the desired result.
However,  it does not work directly for the case $-1<\alpha-d <0$ (super-Manev potential case).
To handle this issue, we shall use an iteration scheme to reduce the order of the derivative in the Riesz interaction. Let us recall 
\bq\label{R_k}
\mathcal{R}_m(k)= \lambda \sigma^{-N} \int (h+\sigma)^{k(N-2)} \Lambda^{m-kl}u\cdot \Lambda^{m-(k+2)l}\nabla \{(h+\sigma)^N -\sigma^N\} \, dx
\eq
and consider a function
\[
\mathcal{P}_{m}(k):=\frac{1}{N}\int (h+\sigma)^{k(N-2)}\Lambda^{m-kl} u \cdot \Lambda^{m-kl} \{ (h+\sigma) \nabla h\}\, dx
\]
for any $0\leq k <m/l$ and $0<l<1/2$, where $l=(d-\alpha)/2$.

In the following lemma, we show a relation between  $\mathcal{P}_m(k)$ and $\mathcal{R}_{m}(k)$ for any $0\leq k <m/l$ whose proof is postponed to Appendix \ref{app.A} for the smoothness of reading.
\begin{lemma} \label{higher_it}
Let the assumptions of Lemma \ref{zero_u} be satisfied and $0<l<1/2$. Then, for any integer $k \in [1, \, m/l)$, we have
	\bq\label{estimate_Pk}
	\begin{aligned}
		\mathcal{P}_{m}(k)   
		& \leq \mathcal{R}_m (k) +
		C (1+\|h\|_{H^m}) \|u\|_{H^m}\|\Lambda^{m} u \|_{L^2}^2 \nonumber
		-\frac{1}{2} \frac{d}{dt}\int (h+\sigma)^{k(N-2)} |\Lambda^{m-kl} u|^2 \, dx, 
	\end{aligned}
	\eq 
	where $C=C(\gamma, m, k, l)$ is a positive constant.
\end{lemma}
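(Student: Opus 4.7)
The plan is to use the momentum equation in \eqref{np_ER2} to replace the factor $\frac{1}{N}(h+\sigma)\nabla h$ sitting inside $\mathcal{P}_m(k)$. Rearranging the momentum equation gives
\[
\frac{1}{N}(h+\sigma)\nabla h = -\pa_t u - u\cdot\nabla u - \nu u + \sigma^{-N}\lambda\,\nabla\Lambda^{\alpha-d}\{(h+\sigma)^N-\sigma^N\},
\]
so after applying $\Lambda^{m-kl}$ to both sides and pairing against $(h+\sigma)^{k(N-2)}\Lambda^{m-kl}u$, the expression $\mathcal{P}_m(k)$ decomposes into four contributions. The Riesz contribution collapses to $\mathcal{R}_m(k)$ on the nose, thanks to the crucial algebraic identity $\alpha-d=-2l$, which yields
\[
\Lambda^{m-kl}\nabla\Lambda^{\alpha-d} \{(h+\sigma)^N-\sigma^N\} = \Lambda^{m-(k+2)l}\nabla \{(h+\sigma)^N-\sigma^N\}.
\]
The damping term produces $-\nu\int (h+\sigma)^{k(N-2)}|\Lambda^{m-kl}u|^2\,dx\le 0$, which is discarded in the upper bound.

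For the time-derivative term, since $\Lambda^{m-kl}$ commutes with $\pa_t$,
\[
-\int (h+\sigma)^{k(N-2)}\Lambda^{m-kl}u\cdot\pa_t\Lambda^{m-kl}u\,dx
= -\tfrac{1}{2}\tfrac{d}{dt}\!\int (h+\sigma)^{k(N-2)}|\Lambda^{m-kl}u|^2\,dx
+ \tfrac{1}{2}\!\int \pa_t(h+\sigma)^{k(N-2)}|\Lambda^{m-kl}u|^2\,dx.
\]
The first term is exactly the prefactor appearing in the statement. The residual term is controlled by first invoking the continuity equation $\pa_t h = -u\cdot\nabla h-\tfrac{1}{N}(h+\sigma)\nabla\cdot u$ together with the chain rule to get $\|\pa_t(h+\sigma)^{k(N-2)}\|_{L^\infty}\lesssim (1+\|h\|_{H^m})\|u\|_{H^m}$, using the lower bound $h+\sigma\ge \sigma/2$ from \eqref{as}. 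Combined with $\|\Lambda^{m-kl}u\|_{L^2}\le \|\Lambda^m u\|_{L^2}$ coming from Lemma \ref{Sobolev_T} (available because $m-kl\le m$), this yields a bound of the desired form $C(1+\|h\|_{H^m})\|u\|_{H^m}\|\Lambda^m u\|_{L^2}^2$.

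The convection term $-\int (h+\sigma)^{k(N-2)}\Lambda^{m-kl}u\cdot\Lambda^{m-kl}(u\cdot\nabla u)\,dx$ is split via the commutator $[\Lambda^{m-kl},u\cdot\nabla]$. The commutator estimate \eqref{comm} of Lemma \ref{lem_moser} bounds it by $\|u\|_{H^m}\|\Lambda^{m-kl}u\|_{L^2}$ times a norm of $u$, and the leading piece $u\cdot\nabla(\Lambda^{m-kl}u)$ is integrated by parts to move the derivative onto the weight $(h+\sigma)^{k(N-2)}u$; the resulting factors $\nabla\cdot u$ and $\nabla(h+\sigma)^{k(N-2)}$ are absorbed in $L^\infty$ via Sobolev embedding (this is where the condition $m>1+d/2$ enters) together with Lemma \ref{tool}. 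Each piece is again upgraded to $\|\Lambda^m u\|_{L^2}^2$ using Lemma \ref{Sobolev_T}.

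The main obstacle, and the only delicate bookkeeping, will be keeping the weight $(h+\sigma)^{k(N-2)}$ together with the non-integer order operator $\Lambda^{m-kl}$ while producing only the quadratic $u$-factor $\|\Lambda^m u\|_{L^2}^2$ (and no $\|\Lambda^m h\|_{L^2}^2$) in the error. This is what forces the combined use of the Moser product and chain rules \eqref{moser}, \eqref{chain rule}, the commutator estimates \eqref{comm}--\eqref{comm0}, and Lemma \ref{tool}, all under the pointwise bound on $h+\sigma$ given by \eqref{as}. Summing these estimates then produces exactly the inequality claimed in Lemma \ref{higher_it}.
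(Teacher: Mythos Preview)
Your proposal is correct and essentially identical to the paper's proof in Appendix~\ref{app.A}. The paper computes $\tfrac{1}{2}\tfrac{d}{dt}\int (h+\sigma)^{k(N-2)}|\Lambda^{m-kl}u|^2\,dx$ via the momentum equation and then rearranges to isolate $\mathcal{P}_m(k)$, while you substitute the momentum equation directly into $\mathcal{P}_m(k)$; the resulting four pieces (Riesz, damping, time-derivative, convection) and their estimates---including the commutator splitting of the convection term, the integration by parts onto the weight, and the use of Lemma~\ref{Sobolev_T} to upgrade $\|\Lambda^{m-kl}u\|_{L^2}$ to $\|\Lambda^m u\|_{L^2}$---coincide with the paper's $\sfI_1$, $\sfI_2$ estimates. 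One small remark: for the commutator $[\Lambda^{m-kl},u\cdot]\nabla u$ you should cite \eqref{comm0} rather than \eqref{comm}, since the operator being commuted is $\Lambda^{m-kl}$ and not $\Lambda^{m-kl}\partial_{x_i}$.
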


Together with the lemma above, the following is the core of the proof for Proposition \ref{higher_u}. By using them together, we can manipulate the term $\mathcal{R}_m(k)$ into the term $\mathcal{R}_m(k+1)$ with reduced order of the derivative on the Riesz interaction term.

\begin{lemma}\label{Gap}
Let the assumptions of Lemma \ref{zero_u} be satisfied and $0<l<1/2$. Then, for any integer $k \in [0,\, m/l-1)$, we have
\bq\label{gap}
\begin{aligned}
	\lt|  \mathcal{R}_{m}(k) - \tilde{\lambda} \mathcal{P}_{m}(k+1) \rt|
	\leq C \lambda \|h\|_{H^m}\| \Lambda^{m} u\|_{L^2}^2+ C \lambda \|h\|_{H^m}\| \Lambda^{m} h\|_{L^2}^2,
\end{aligned}
\eq
where $\tilde{\lambda}=\lambda \sigma^{-N} N^2$ and $C=C(\gamma, m, k, l)$ is a positive constant.
\end{lemma}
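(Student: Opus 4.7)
\textbf{Proof proposal for Lemma \ref{Gap}.} The plan is to transform $\mathcal{R}_m(k)$ into $\tilde\lambda\mathcal{P}_m(k+1)$ by a sequence of algebraic rewritings and commutator swaps, collecting the remainder terms and bounding them via the commutator estimate \eqref{comm0} together with Lemma \ref{tool} and Remark \ref{rem2}. The crucial observation is that $\nabla\{(h+\sigma)^N - \sigma^N\} = N(h+\sigma)^{N-1}\nabla h = N(h+\sigma)^{N-2}\,(h+\sigma)\nabla h$, so $\mathcal{R}_m(k)$ can be recast as
\[
\mathcal{R}_m(k) = \lambda\sigma^{-N} N \int (h+\sigma)^{k(N-2)} \Lambda^{m-kl} u \cdot \Lambda^{m-(k+2)l}\bigl\{(h+\sigma)^{N-2}\,(h+\sigma)\nabla h\bigr\}\,dx.
\]
This is structurally parallel to $\tilde\lambda\mathcal{P}_m(k+1) = \lambda\sigma^{-N}N\int (h+\sigma)^{(k+1)(N-2)} \Lambda^{m-(k+1)l}u \cdot \Lambda^{m-(k+1)l}\{(h+\sigma)\nabla h\}\,dx$, and the two differ only by how the factor $(h+\sigma)^{N-2}$ and the two $\Lambda$--powers are distributed.

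First I would use the commutator identity $\Lambda^{m-(k+2)l}\{(h+\sigma)^{N-2}\,v\} = (h+\sigma)^{N-2}\Lambda^{m-(k+2)l}v + [\Lambda^{m-(k+2)l}, (h+\sigma)^{N-2}]v$ with $v = (h+\sigma)\nabla h$, then fold $(h+\sigma)^{N-2}$ into the first factor to produce $(h+\sigma)^{(k+1)(N-2)}\Lambda^{m-kl}u$. Next I would use the self--adjointness of $\Lambda^{-l}$ (with $l = (d-\alpha)/2$, $0 < l < 1/2$) to split the weight $\Lambda^{m-(k+2)l} = \Lambda^{-l}\Lambda^{m-(k+1)l}$ and move one factor $\Lambda^{-l}$ to the other side of the pairing. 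A second commutator swap, $\Lambda^{-l}\{(h+\sigma)^{(k+1)(N-2)}\Lambda^{m-kl}u\} = (h+\sigma)^{(k+1)(N-2)}\Lambda^{m-(k+1)l}u + [\Lambda^{-l},(h+\sigma)^{(k+1)(N-2)}]\Lambda^{m-kl}u$, produces the main term $\tilde\lambda\mathcal{P}_m(k+1)$ and a second commutator remainder. Writing $\mathcal{R}_m(k)-\tilde\lambda\mathcal{P}_m(k+1)$ as the sum of these two commutator integrals, both of the shape $\lambda\sigma^{-N}N\int (h+\sigma)^{k(N-2)}\,A\cdot [\Lambda^s,(h+\sigma)^{q}]\,B\,dx$ with $s\in\{m-(k+2)l,\,-l\}$ and $B$ either $(h+\sigma)\nabla h$ or $\Lambda^{m-kl}u$, I would estimate each by Cauchy--Schwarz and the commutator bound \eqref{comm0}, then apply Lemma \ref{tool} to control $\|\Lambda^r(h+\sigma)^{q}\|_{L^2}$ by $\|\Lambda^r h\|_{L^2}$ up to a power of $\|h\|_{H^m}$.

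The main obstacle will be bookkeeping the derivative counts so that the two commutator remainders each produce exactly the scaling $\lambda\|h\|_{H^m}(\|\Lambda^m u\|_{L^2}^2 + \|\Lambda^m h\|_{L^2}^2)$. For the first remainder, the commutator $[\Lambda^{m-(k+2)l},(h+\sigma)^{N-2}]$ applied to $(h+\sigma)\nabla h$ will, via \eqref{comm0}, generate a term bounded by $\|(h+\sigma)^{N-2}\|_{H^{d/2+1+\varepsilon}}\|\Lambda^{m-(k+2)l-1}\{(h+\sigma)\nabla h\}\|_{L^2} + \|(h+\sigma)\nabla h\|_{H^{d/2+\varepsilon}}\|\Lambda^{m-(k+2)l}(h+\sigma)^{N-2}\|_{L^2}$; using Remark \ref{rem2}, Lemma \ref{tool}, and the constraint $m > 1+d/2$, each factor can be estimated and paired, via Cauchy--Schwarz against $\|(h+\sigma)^{k(N-2)}\Lambda^{m-kl}u\|_{L^2}\lesssim\|\Lambda^m u\|_{L^2}$, to land inside the target bound after one factor of $\|h\|_{H^m}$ from the commutator. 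The second remainder is handled identically, with $s=-l$ and the roles of $u$ and $h$ swapped, which produces the $\|\Lambda^m h\|_{L^2}^2$ contribution. The arithmetic ensuring that $m-(k+2)l-1 \le m-1$ and that $\Lambda^{-l}$ is applicable (i.e.\ the argument has zero mean, which follows since $\Lambda^{m-kl}u$ has zero mean when $m-kl>0$ and $(h+\sigma)\nabla h = \tfrac12\nabla(h+\sigma)^2$ is a gradient) must be verified before invoking \eqref{comm0}, and this is the only subtle bookkeeping.
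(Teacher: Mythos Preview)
Your overall strategy---reduce $\mathcal{R}_m(k)$ to $\tilde\lambda\mathcal{P}_m(k+1)$ via two commutator swaps---is the right instinct and tracks the paper's argument in spirit. However, there is a genuine gap in the handling of your \emph{second} commutator remainder. After moving $\Lambda^{-l}$ across by self-adjointness you must control
\[
\big\|[\Lambda^{-l},\,(h+\sigma)^{(k+1)(N-2)}]\,\Lambda^{m-kl}u\big\|_{L^2}.
\]
If you invoke \eqref{comm0} with $s=-l$, $g=g_{k+1}$, $f=\Lambda^{m-kl}u$, the second term on the right of \eqref{comm0} is $\|f\|_{H^{d/2+\varepsilon}}\|\Lambda^{-l}g\|_{L^2}$, i.e.\ it demands $\|\Lambda^{m-kl}u\|_{H^{d/2+\varepsilon}}$. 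This is $m-kl+d/2+\varepsilon$ derivatives on $u$, which exceeds the available $m$ whenever $kl<d/2+\varepsilon$---in particular always for $k=0$, and typically for small $k$. So the proof does not close with the tools you cite. (A related minor point: the product $(h+\sigma)^{(k+1)(N-2)}\Lambda^{m-kl}u$ need not have zero mean just because $\Lambda^{m-kl}u$ does; only the $h$-side factor is guaranteed mean-zero, which is what actually justifies the self-adjointness step.)

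The paper repairs exactly this derivative-count obstruction by treating $k=0$ and $k\ge 1$ differently. For $k=0$ there is no weight in front of $\Lambda^m u$, so one first moves $\Lambda^{-l}$ across (no commutator arises) to obtain $\lambda\sigma^{-N}N\int\Lambda^{m-l}u\cdot\Lambda^{m-l}\{(h+\sigma)^{N-1}\nabla h\}\,dx$, and then a \emph{single} commutator $[\Lambda^{m-l},(h+\sigma)^{N-2}]$ is applied to $(h+\sigma)\nabla h$; here \eqref{comm0} costs only $\|(h+\sigma)\nabla h\|_{H^{d/2+\varepsilon}}\lesssim\|h\|_{H^m}$, which is fine. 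For $k\ge 1$ the weight $(h+\sigma)^{k(N-2)}$ is already present, and the paper avoids commuting bare $\Lambda^{-l}$ through it: instead one integrates by parts twice to transfer $\Lambda^{-l}\nabla$ as a single block, producing a remainder governed by $[\Lambda^{-l}\nabla\cdot,\,g_k]\Lambda^{m-kl}u$. This is then estimated by \eqref{comm} (not \eqref{comm0}), whose right-hand side places all the excess regularity on $g_k$ rather than on $\Lambda^{m-kl}u$:
\[
\|[\Lambda^{-l}\nabla\cdot,g_k]\Lambda^{m-kl}u\|_{L^2}\le C\big(\|g_k\|_{H^{d/2+1+\varepsilon}}\|\Lambda^{m-(k+1)l}u\|_{L^2}+\|g_k\|_{H^{d/2+1-l+\varepsilon}}\|\Lambda^{m-kl}u\|_{L^2}\big),
\]
and both $u$-norms here are bounded by $\|\Lambda^m u\|_{L^2}$. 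The residual term then reduces to $\lambda\sigma^{-N}\int(h+\sigma)^{k(N-2)}\Lambda^{m-(k+1)l}u\cdot\Lambda^{m-(k+1)l}\nabla\{(h+\sigma)^N-\sigma^N\}\,dx$, from which one extracts $(h+\sigma)^{N-2}$ by the same single commutator as in the $k=0$ case. In short: your first commutator is fine, but the second must be replaced by an integration-by-parts maneuver that lets you appeal to \eqref{comm} instead of \eqref{comm0}.
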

\begin{proof}For notational simplicity, let us write $g_k:=(h+\sigma)^{k(N-2)}-\sigma^{k(N-2)}$ in the proof. 

	We first show \eqref{gap} in the case $k=0$. In this case, we observe
	\bq\label{es0}
	\begin{aligned}
		\mathcal{R}_{m}(0)  
		&=\lambda \sigma^{-N} N \int \Lambda^{m-l} u \cdot \Lambda^{m-l}   \{ (h+\sigma)^{N-1} \nabla h \} \, dx \\
		&=\lambda \sigma^{-N} N \int \Lambda^{m-l} u \cdot [\Lambda^{m-l},\, (h+\sigma)^{N-2}] \{ (h+\sigma)\nabla h\}   \, dx + \tilde{\lambda} \mathcal{P}_{m}(1)\\
		&=\frac{\lambda \sigma^{-N} N}{2} \int \Lambda^{m-l} u \cdot [\Lambda^{m-l},\, (h+\sigma)^{N-2}]  \nabla (h+\sigma)^2   \, dx + \tilde{\lambda} \mathcal{P}_{m}(1).
	\end{aligned}
	\eq 
On the other hand, the following identity holds:
	\bq\nonumber
	[\Lambda^{m-l}, \, (h+\sigma)^{N-2}]=[\Lambda^{m-l}, \, (h+\sigma)^{N-2}-\sigma^{N-2}] = [\Lambda^{m-l}, \, g_1]
	\eq 
	in the operator sense with a given constant $\sigma$. Thus, by using \eqref{comm0} with sufficiently small $\varepsilon>0$, we obtain
	\bq\label{es1}
	\begin{aligned}
		\| [\Lambda^{m-l} , g_1] \, \nabla (h+\sigma)^2\|_{L^2} 
		&\leq C \|g_1\|_{H^{\frac{d}{2}+1+\varepsilon}}\|\Lambda^{m-l}(h+\sigma)^2 \|_{L^2}
		+ C\| \nabla (h+\sigma)^2\|_{H^{\frac{d}{2}+\varepsilon}} \|\Lambda^{m-l} g_1 \|_{L^2} \\
		&\leq C \|g_1\|_{H^m}\|\Lambda^{m}(h+\sigma)^2 \|_{L^2}
		+ C\| \nabla (h+\sigma)^2\|_{H^{m-1}} \|\Lambda^{m} g_1 \|_{L^2}
		\\
		&\leq C \|h\|_{H^m}\|\Lambda^{m}h \|_{L^2}
	\end{aligned}
	\eq 
 for some $C = C(\gamma, m, l) > 0$. Here we used \eqref{chain rule} to get
	\bq\nonumber
	\begin{aligned}
		&\|\Lambda^{m} (h+\sigma)^2\|_{L^2}  \leq C  \|\Lambda^m h\|_{L^2} , \\
		&\|\nabla (h+\sigma)^2\|_{H^{m-1}} \leq C  \|\nabla h\|_{H^{m-1}} \leq C \| \Lambda^m h\|_{L^2}, \quad \mbox{and}\\
		 &\|\Lambda^m g_1\|_{L^2} \leq \|g_1\|_{H^m} \leq C \|h\|_{H^m}.
	\end{aligned}
	\eq 
	Thus, we combine \eqref{es0} with \eqref{es1} to deduce that
	\[
	\begin{aligned}
		\lt|  \mathcal{R}_{m}(0) - \tilde{\lambda} \mathcal{P}_{m}(1) \rt|
		&\leq C \lambda \| \Lambda^{m-l} u\|_{L^2} \| [\Lambda^{m-l} ,\, (h+\sigma)^{N-2}-\sigma^{N-2}] \nabla (h+\sigma)^2\|_{L^2} \\
		&\leq C \lambda \| \Lambda^{m} u\|_{L^2}  \|h\|_{H^m} \|\Lambda^{m}h\|_{L^2}\\
		&\leq C\lambda \|h\|_{H^m}\| \Lambda^{m} u\|_{L^2}^2+  C\lambda  \|h\|_{H^m}\| \Lambda^{m} h\|_{L^2}^2
	\end{aligned}
	\]
	by Young's inequality with a positive constant $C=C(\gamma, m, l)$.
	
	Meanwhile, the case $k\ge 1$ requires a more delicate analysis since the operator $\Lambda^{-l}$ and $(h+\sigma)^{k(N-2)}$ do not commute, i.e. $[\Lambda^{-l},\, (h+\sigma)^{k(N-2)}]\neq 0$ for $k\ge 1$.   
	Thus we use the integration by parts twice as follows:
	\begin{align*}
		\mathcal{R}_m(k)
		& = -\lambda \sigma^{-N}   \int  \lt([\Lambda^{-l} \nabla \cdot, \, (h+\sigma)^{k(N-2)}]  \Lambda^{m-kl} u\rt)  \Lambda^{m-(k+1)l} \{ (h+\sigma)^{N}-\sigma^{N}\}\, dx\\
		&\quad + \lambda \sigma^{-N} \int \lt(\nabla(h+\sigma)^{k(N-2)}   \cdot \Lambda^{m-(k+1)l} u\rt)  \Lambda^{m-(k+1)l} \{ (h+\sigma)^{N}-\sigma^{N}\}\, dx\\
		&\quad + \lambda \sigma^{-N} \int \lt(  (h+\sigma)^{k(N-2)}   \Lambda^{m-(k+1)l}  u\rt) \Lambda^{m-(k+1)l} \nabla\{ (h+\sigma)^{N}-\sigma^{N}\}\, dx\\
		&=: \sfJ_{1} +\sfJ_{2} +\sfJ_{3}.
	\end{align*}
Then we use Lemma \ref{lem_moser} and arguments in Remark \ref{rem2} to obtain, for sufficiently small $\varepsilon <m-d/2-1$,
\[
	\begin{aligned}
		\| [\Lambda^{-l} \nabla \cdot, \, g_k]  \Lambda^{m-kl} u\|_{L^2}
		&\leq C \|g_k\|_{H^{\frac{d}{2}+1+\varepsilon}} \|\Lambda^{m-(k+1)l} u\|_{L^2} +	C\|g_k\|_{H^{\frac{d}{2}+1-l+\varepsilon}} \|\Lambda^{m-kl} u\|_{L^2})\\
		&\leq C \|g_k\|_{H^{m}} \|\Lambda^{m} u\|_{L^2}\\
		&\leq C\lt(\|g_k\|_{L^2} + \|\Lambda^m g_k\|_{L^2}\rt) \|\Lambda^m u\|_{L^2}\\
		&\leq C\|\Lambda^m h\|_{L^2} \|\Lambda^m u\|_{L^2},
	\end{aligned}
\]
where $C=C(\gamma, m, d,\alpha, k)$ is a positivie constant. Thus, we get
\[
	\begin{aligned}
		\sfJ_{1} 
		& \leq C \lambda \|  [\Lambda^{-l} \nabla \cdot, \, g_k]\,  \Lambda^{m-kl} u\|_{L^2} \|\Lambda^{m-(k+1)l} \{ (h+\sigma)^{N}-\sigma^{N}\} \|_{L^2} \\
		& \leq C \lambda  \|\Lambda^m h\|_{L^2} \|\Lambda^{m} u\|_{L^2}  \|\Lambda^{m} \{ (h+\sigma)^{N}-\sigma^{N}\} \|_{L^2} \\
		& \leq  C \lambda \|h\|_{H^{m}} \|\Lambda^{m} u\|_{L^2}^2+ C \lambda \|h\|_{H^{m}}  \|\Lambda^{m} h\|_{L^2}^2.  
	\end{aligned}
\]
For $\sfJ_2$, one has
\[
	\begin{aligned}
		\sfJ_{2} 
		&\leq C \lambda \|\nabla(h+\sigma)^{k(N-2)}\|_{L^{\infty}}  \|\Lambda^{m-(k+1)l} u\|_{L^2} \| \Lambda^{m-(k+1)l} \{ (h+\sigma)^{N}-\sigma^{N}\}\|_{L^2}\\
		&\leq C \lambda \|\nabla h\|_{L^{\infty}} \| \Lambda^{m} u \|_{L^2}\| \Lambda^{m}  \{ (h+\sigma)^{N}-\sigma^{N}\} \|_{L^2}\\
		&\leq C \lambda \|\nabla h\|_{L^{\infty}} \| \Lambda^{m} u \|_{L^2}\| \Lambda^{m} h \|_{L^2}\\
		&\leq C \lambda \|\nabla h\|_{L^{\infty}} \| \Lambda^{m} h \|_{L^2}^2 + C \lambda \|\nabla h\|_{L^{\infty}}\| \Lambda^{m} u \|_{L^2}^2
	\end{aligned}
\]
for some $C=C(\gamma, m, d,\alpha, k) > 0$.
	
We next rewrite $\sfJ_3$ as 
\[\begin{aligned}
		\sfJ_{3} 
		&=  \lambda \sigma^{-N} N \int  (h+\sigma)^{k(N-2)}  \Lambda^{m-(k+1)l} u \cdot \Lambda^{m-(k+1)l} \{ (h+\sigma)^{N-1}\nabla h \}\, dx\\
		&=  \lambda \sigma^{-N}N\int  (h+\sigma)^{k(N-2)}  \Lambda^{m-(k+1)l} u \cdot [\Lambda^{m-(k+1)l}, \, (h+\sigma)^{N-2}]\{(h+\sigma)\nabla h \}\, dx + \tilde{\lambda} \mathcal{P}_m(k+1)\\
		&=   \frac{ \lambda \sigma^{-N}N }{2} \int  (h+\sigma)^{k(N-2)}  \Lambda^{m-(k+1)l} u \cdot [\Lambda^{m-(k+1)l}, \, g_1]\nabla (h+\sigma)^2 \, dx + \tilde{\lambda} \mathcal{P}_m(k+1).
\end{aligned}\]
Similarly as in \eqref{es1}, we have 
	\bq\nonumber
	\| [\Lambda^{m-(k+1)l} , g_1]\, \nabla (h+\sigma)^2 \|_{L^2} 
	\leq C \|h\|_{H^m} \|\Lambda^{m}h \|_{L^2}
	\eq 
	for $k< m/l -1$ and some $C=C(\gamma, m, k, d,\alpha) > 0$.
Thus, it implies
\[
	\begin{aligned}
		|\sfJ_{3} - \tilde{\lambda} \mathcal{P}_m(k+1)|
  \leq C \lambda  \|h\|_{H^m}\|\Lambda^{m} u\|_{L^2}\|\Lambda^{m}h \|_{L^2}  \leq C \lambda \| h\|_{H^m} \|\Lambda^{m} u\|_{L^2}^2+C \lambda \| h\|_{H^m} \|\Lambda^{m}  h\|_{L^2}^2.
	\end{aligned}
\]
Hence, we collect all the estimates for $\sfJ_i$'s to yield 
	\begin{align*}
		\lt|\mathcal{R}_m (k) - \tilde{\lambda} \mathcal{P}_m (k+1) \rt|
		&\leq  C \lambda \| h\|_{H^m} \| \Lambda^{m} h \|_{L^2}^2 + C \lambda \| h\|_{H^m} \| \Lambda^{m} u \|_{L^2}^2 
	\end{align*}
	as desired.
\end{proof}

Now, we are ready to present the proof of Proposition \ref{higher_u}.
\begin{proof}[Proof of Proposition \ref{higher_u}]
Let $m>1+d/2$. Through the integration by parts, we find
\bq\nonumber
\begin{aligned}
	&\frac{1}{2} \frac{d}{dt} \left( \int  |\Lambda^m h|^2 \, dx  +\int  |\Lambda^m u|^2 \, dx  \right)+\nu \int  |\Lambda^m u|^2 \, dx \\
	&\qquad=  -\int \Lambda^m h\, [\Lambda^m, u\cdot ]\, \nabla h \, dx
	-\int \Lambda^m u \cdot ([\Lambda^m, u\cdot ]\, \nabla u )\, dx-\frac{1}{2} \int  (\nabla \cdot u) (|\Lambda^m u|^2+|\Lambda^m h|^2)\, dx  \\
	&\qquad\quad -\frac{1}{N} \int \Lambda^m h\, \Lambda^m \{ (h+\sigma) (\nabla \cdot u)\} \, dx
	-\frac{1}{N} \int \Lambda^m u \cdot \Lambda^m \{ (h+\sigma) \nabla h\} \, dx \\
	& \qquad\quad + \lambda \sigma^{-N} \int \Lambda^m u \cdot \Lambda^m \nabla \Lambda^{\alpha-d} \{ (h+\sigma)^{N} -\sigma^{N}\} \, dx \nonumber\\
	&\qquad =: \sum_{i=1}^5 \sfK_i  + \mathcal{R}_m(0) ,
\end{aligned}
\eq
where $\mathcal{R}_m(0)$ is given as in \eqref{R_k} for the case $k=0$.

We use the commutator estimate \eqref{comm0} with taking $\varepsilon>0$ sufficiently small so that 
\bq\nonumber
\begin{aligned}
\sfK_1 &\leq C \| \Lambda^m h\|_{L^2} ( \| u\|_{H^m} \|\Lambda^{m} h\|_{L^2} + \| h\|_{H^m} \|\Lambda^m u\|_{L^2} )\\
&\leq C (\| u\|_{H^m}+ \|h\|_{H^m})  \| \Lambda^m h\|_{L^2}^2 +  C\| h\|_{H^m} \|\Lambda^m u\|_{L^2}^2
\end{aligned}
\eq
and
\[
\sfK_2 \leq C  \| u\|_{H^m} \| \Lambda^m u\|_{L^2}^2
\]
along with Young's inequality.
Here, $C>0$ depends on $m$.
It is easy to get
\begin{align*}
\sfK_3& \leq \frac{1}{2} \|\nabla \cdot u\|_{L^{\infty}} \|\Lambda^m u\|_{L^2}^2+\frac{1}{2} \|\nabla \cdot u\|_{L^{\infty}}\|\Lambda^m h\|_{L^2}^2.
\end{align*}
We use the integration by parts twice so that
\bq\nonumber
\begin{aligned}
\sfK_4 + \sfK_5=&-\frac{1}{N} \int \Lambda^m h \, \Lambda^m ( h (\nabla \cdot u)) \, dx
-\frac{1}{N} \int \Lambda^m u \cdot \Lambda^m ( h \nabla h) \, dx \\
=&-\frac{1}{N} \int \Lambda^m h\,  [\Lambda^m, h] (\nabla \cdot u) \, dx 
-\frac{1}{N} \int \Lambda^m u \cdot [\Lambda^m, h]\nabla h \, dx 
+\frac{1}{N} \int \Lambda^m h \nabla h  \cdot \Lambda^m u\, dx. 
\end{aligned}
\eq 
Then it follows from \eqref{comm0} with $\varepsilon>0$ small enough that
\bq\nonumber
\begin{aligned}
\sfK_4+\sfK_5
	&\leq C  (\|h\|_{H^m}+\|\nabla h\|_{L^{\infty}} ) \| \Lambda^m u\|_{L^2} \| \Lambda^m h\|_{L^2} +C \|u\|_{H^m}\| \Lambda^m h\|_{L^2}^2  \\  
	&\leq C  (\|h\|_{H^m}+\|\nabla h\|_{L^{\infty}} + \|u\|_{H^m} ) \| \Lambda^m h\|_{L^2}^2
	 +C  (\|h\|_{H^m}+\|\nabla h\|_{L^{\infty}} ) \| \Lambda^m u\|_{L^2}^2 
\end{aligned}
\eq 
thanks to the Young's inequality. Here, a positive constant $C$ depends on $\gamma$ and $m$. Thus, we obtain 
\bq\label{s2}
\begin{aligned}
&\frac{1}{2} \frac{d}{dt} \left(\int  |\Lambda^m h|^2 \, dx + \int  |\Lambda^m u|^2 \, dx \right) +\nu \int  |\Lambda^m u|^2 \, dx \\ 
&\quad  \leq  C(\| u\|_{H^m}+  \|h\|_{H^m} )  \|\Lambda^m  h\|_{L^2}^2
+ C (\| u\|_{H^m}+\|h\|_{H^m} ) \| \Lambda^m u\|_{L^2}^2 + \mathcal{R}_{m} (0)  
\end{aligned}
\eq 
for some $C=C(\gamma, m) > 0$.

Now we need to control the term $\mathcal{R}_m(0)$ from the Riesz interaction.

\noindent $\bullet$ (Sub-Manev potential case: $-2\leq \alpha-d \leq -1$.) Here, we use Lemmas \ref{Sobolev_T} and \ref{tool} to get
\bq\label{ellp_good}
\begin{aligned}
\mathcal{R}_m(0) 
&  \leq \lambda\sigma^{-N} \| \Lambda^m u \|_{L^2} \|\Lambda^{m+\alpha-d} \nabla  \{(h+\sigma)^N -\sigma^N \}\|_{L^2}   \\
& \leq \lambda \sigma^{-N}\| \Lambda^m u \|_{L^2} \| \Lambda^{m-1} \nabla \{(h+\sigma)^N -\sigma^N \}\|_{L^2} \\
&\leq 2C_0 \lambda \| \Lambda^{m} u \|_{L^2} \| \Lambda^m   h \|_{L^2}  + C\lambda\|h\|_{H^m}\|\Lambda^m h\|_{L^2}\|\Lambda^m u\|_{L^2}(1+\|h\|_{H^m})^{m-2}\\
&\leq C_0 \lambda \| \Lambda^m u \|_{L^2}^2 +C_0 \lambda   \| \Lambda^{m-1} \nabla h \|_{L^2}^2 + C\lambda\|h\|_{H^m}(1+\|h\|_{H^m})^{m-2}\lt(\|\Lambda^m u\|_{L^2}^2 + \|\Lambda^m h\|_{L^2}^2 \rt),
\end{aligned}
\eq
where $C_0$ is given as
\[
C_0 = C_d (2\sigma)^{-1} N \max \{ 2^{-(N-1)}, 2^{N-1}\}. 
\]

\noindent $\bullet$ (Super-Manev potential case: $-1< \alpha-d <0$.) In this case, it is difficult to directly estimate the term $\mathcal{R}_m(0)$ as we just previously did. 
Instead, we shall use an iteration argument, which uses Lemmas \ref{Gap} and  \ref{higher_it} repeatedly, to reduce the order of derivative on the Riesz interaction term. Then, we arrive at 
\bq
\begin{aligned}\label{R_bad}
\mathcal{R}_{m}(0) 
&\leq  C \lambda ( \|u\|_{H^m}+\|h\|_{H^m} + \|u\|_{H^m} \|h\|_{H^m}) \|\Lambda^{m} u\|_{L^2}^2 + C \lambda \|h\|_{H^m} \|\Lambda^{m} h\|_{L^2}^2  \\ 
&\quad - \frac{\tilde{\lambda}}{2} \frac{d}{dt} \int (h+\sigma)^{k(N-2)} |\Lambda^{m-kl} u|^2 \, dx  + \tilde{\lambda}\mathcal{R}_m(1) \\
&\leq 
C \sum_{k=1}^2 \lambda^k (  \|u\|_{H^m} + \|h\|_{H^m}+\|u\|_{H^m}\|h\|_{H^m}) \|\Lambda^{m} u\|_{L^2}^2  +C \sum_{k=1}^2 \lambda^k  \|h\|_{H^m}  \|\Lambda^{m} h\|_{L^2}^2 \\
& \quad - \sum_{k=1}^2\frac{\tilde{\lambda}^k}{2} \frac{d}{dt} \int (h+\sigma)^{k(N-2)} |\Lambda^{m-kl} u|^2 \, dx 
+ \tilde{\lambda}^2\mathcal{R}_m(2) \\
&\leq \ \cdots \\
&\leq 
C \sum_{k=1}^{k_0} \lambda^k (  \|u\|_{H^m} + \|h\|_{H^m}+\|u\|_{H^m}\|h\|_{H^m}) \|\Lambda^{m} u\|_{L^2}^2  +C \sum_{k=1}^{k_0} \lambda^k  \|h\|_{H^m}  \|\Lambda^{m} h\|_{L^2}^2 \\
& \quad - \sum_{k=1}^{k_0}\frac{\tilde{\lambda}^k}{2} \frac{d}{dt} \int (h+\sigma)^{k(N-2)} |\Lambda^{m-kl} u|^2 \, dx 
+ \tilde{\lambda}^{k_0}\mathcal{R}_m(k_0) \\
\end{aligned}
\eq 
for $1\leq k_0 <m/l$ and recall that $\tilde{\lambda}=\lambda \sigma^{-N}N^2$.
We now take $k_0\in \mathbb{N}$ as a minimum value for $k$ satisfying 
\bq\nonumber
\max\lt\{1,\,  \frac{1}{l} -2 \rt\} \leq k <\frac{m}{l} 
\eq 
which is valid when $0<l<1/2$. Then by applying Lemmas \ref{Sobolev_T} and \ref{tool}, one obtains
\bq\label{R2}
\begin{aligned}
\mathcal{R}_m(k_0)
&= \lambda \sigma^{-N} \int (h+\sigma)^{k_0(N-2)} \Lambda^{m-k_0l}u\cdot \Lambda^{m-1}\nabla \Lambda^{1-(k_0+2)l} \{(h+\sigma)^N -\sigma^N\} \, dx \\
&\leq  \lambda \sigma^{k_0 (N-2)-N}  \max\{ 2^{-k_0 (N-2)}, 2^{k_0 (N-2)}\}  \| \Lambda^{m-k_0l} u\|_{L^2} \| \Lambda^{m-1}\nabla  \{ (h+\sigma)^{N}-\sigma^{N}\}\|_{L^2} \\
& \leq  \widetilde{C}_{0} \lambda     \| \Lambda^{m} u \|_{L^2} \| \Lambda^{m-1} \nabla h \|_{L^2} + C \lambda \|h\|_{H^m}\|\Lambda^m h\|_{L^2}\|\Lambda^m u\|_{L^2}(1+\|h\|_{H^m})^{m-2}\\
&\leq \frac{\widetilde{C}_{0}\lambda}{2}\| \Lambda^{m} u \|_{L^2}^2 +\frac{ \widetilde{C}_{0}\lambda}{2} \|\Lambda^{m-1} \nabla h \|_{L^2}^2 + C \lambda \|h\|_{H^m}(1+\|h\|_{H^m})^{m-2}\lt(\|\Lambda^m u\|_{L^2}^2 + \|\Lambda^m h\|_{L^2}^2 \rt),
\end{aligned}
\eq 
where $\widetilde{C}_0$ is written as
\[
\widetilde{C}_{0} := C_d \sigma^{k_0 (N-2)-1}N  \max\{ 2^{-k_0 (N-2)-(N-1)}, 2^{-k_0 (N-2)+N-1},  2^{k_0 (N-2)+N-1}\}.
\]
Thus, by \eqref{R_bad} and \eqref{R2}, we obtain 
\bq\nonumber
\begin{aligned}
\mathcal{R}_m(0) &\leq C \sum_{k=1}^{k_0} \lambda^k (  \|u\|_{H^m} + \|h\|_{H^m}+\|u\|_{H^m}\|h\|_{H^m}) \|\Lambda^{m} u\|_{L^2}^2  +C \sum_{k=1}^{k_0} \lambda^k  \|h\|_{H^m}  \|\Lambda^{m} h\|_{L^2}^2 \\
&\quad + C_{1} \lambda^{k_0+1}  \| \Lambda^{m} u \|_{L^2}^2 + C_{1}\lambda^{k_0+1} \|\Lambda^{m} h \|_{L^2}^2- \sum_{k=1}^{k_0}\frac{\tilde{\lambda}^k}{2} \frac{d}{dt} \int (h+\sigma)^{k(N-2)} |\Lambda^{m-kl} u|^2 \, dx \\
&\quad + C \lambda^{k_0+1}\|h \|_{H^m}(1+\|h\|_{H^m})^{m-2}(\| \Lambda^{m} u \|_{L^2}^2 + \|\Lambda^{m} h \|_{L^2}^2),
\end{aligned}
\eq
where $C =C (\gamma, m, \alpha, d)$ is a positive constant and $C_1 > 0$ is given as
\bq\nonumber
C_{1} :=(\sigma^{-N}N^2)^{k_0}2^{-1}\widetilde{C}_{0}\\
 = C_d  \sigma^{-(2k_0 +1)} N^{2k_0+1}  \max\{ 2^{-k_0 (N-2)-N}, 2^{-(k_0-1) (N-2)},  2^{(k_0+1) (N-2)}\}. 
\eq 
This yields
\bq\label{ellp_bad}
\begin{aligned}
\mathcal{R}_m(0) &\leq  \lt( C\hat\lambda (\|u\|_{H^m} + \|h\|_{H^m} + \|u\|_{H^m}\|h\|_{H^m})+ C_{1}\lambda^{k_0 + 1} +C \lambda^{k_0+1}\|h \|_{H^m}(1+\|h\|_{H^m})^{m-2} \rt) \|\Lambda^{m} u\|_{L^2}^2\\
 &\quad + \lt( C(\hat\lambda + \lambda^{k_0+1}(1+\|h\|_{H^m})^{m-2} ) \|h\|_{H^m} + C_{1}\lambda^{k_0 + 1} \rt) \|\Lambda^{m} h\|_{L^2}^2 \\
  &\quad - \sum_{k=1}^{k_0}\frac{\tilde{\lambda}^k}{2} \frac{d}{dt} \int (h+\sigma)^{k(N-2)} |\Lambda^{m-kl} u|^2 \, dx.
\end{aligned}
\eq 
Here $\hat\lambda  = \sum_{k=1}^{k_0} \lambda^k > 0$.

Hence, we combine \eqref{s2} with \eqref{ellp_good} for the case $-2\leq \alpha-d \leq -1$ and with \eqref{ellp_bad} for the case $-1<\alpha-d <0$ so that we conclude \eqref{h1} and \eqref{h2}, respectively. 
\end{proof}

We close this section by giving the dissipation estimate for $\nabla h$ in the $H^{m-1}$ norm. The proof is based on the hypocoercivity-type estimate. Precisely, we consider the time-derivative of the following integral 
\[
\int \nabla (\Lambda^s h) \cdot \Lambda^s u\, dx
\]
with $s \leq m-1$ which gives an appropriate dissipation rate for $h$.  Note that for $\mu > 0$ small enough 
\[
\|(h,u)\|_{H^m}^2  + \mu\sum_{s=0}^{m-1}\int \nabla (\Lambda^s h) \cdot \Lambda^s u\, dx \sim \|(h,u)\|_{H^m}^2. 
\]

\begin{lemma}\label{higher_h}
Let the assumptions of Lemma \ref{zero_u} be satisfied. Then for $0\leq s\leq m-1$ we have
\[
	\begin{aligned}
		&\frac{d}{dt} \int \nabla (\Lambda^s h) \cdot \Lambda^s u\, dx
		+\nu \int \nabla (\Lambda^s h) \cdot \Lambda^s u\, dx 
		+\frac{\sigma}{N} \int |\nabla (\Lambda^s h) |^2\, dx\\
		&\quad \leq \lt(C\|u\|_{H^m} +C\|h\|_{H^m} +\frac{\sigma}{N}\rt)  \|\Lambda^{s+1} u\|_{L^2}^2   \\
		&\qquad+( C\|u\|_{H^m} + C\|h\|_{H^m}  +2C_0\lambda +C \lambda \|h\|_{H^m}(1+\|h\|_{H^m})^{m-2}) \|\nabla \Lambda^s h\|_{L^2}^2
	\end{aligned}
\]
 for some $C=C(\gamma, d, s, m) > 0$, where $C_0$ is appeared in Proposition \ref{higher_u}.
	\end{lemma}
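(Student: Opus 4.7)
The plan is to differentiate $\int \nabla(\Lambda^s h)\cdot \Lambda^s u\,dx$ in time, substitute $\pa_t h$ and $\pa_t u$ from \eqref{np_ER2}, and isolate the three structural contributions that produce the identities appearing on the left-hand side of the claim. Splitting the pressure term $\frac{1}{N}(h+\sigma)\nabla h$ in $\pa_t u$ into its leading constant part $\frac{\sigma}{N}\nabla h$ and a quadratic remainder yields the coercive term $-\frac{\sigma}{N}\|\nabla\Lambda^s h\|_{L^2}^2$; the velocity damping $-\nu u$ in $\pa_t u$ produces $-\nu\int \nabla(\Lambda^s h)\cdot\Lambda^s u\,dx$; and the analogous splitting of $\frac{1}{N}(h+\sigma)(\nabla\cdot u)$ in $\pa_t h$, combined with one integration by parts moving $\nabla$ from $\Lambda^s\pa_t h$ onto $\Lambda^s u$, produces $+\frac{\sigma}{N}\|\Lambda^s\nabla\cdot u\|_{L^2}^2$. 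The first two pass to the left-hand side; the third is bounded by $\frac{\sigma}{N}\|\Lambda^{s+1}u\|_{L^2}^2$ and absorbed into the right-hand side.

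The convective contributions $u\cdot\nabla h$ and $u\cdot\nabla u$, together with the quadratic pressure remainders $\frac{1}{N}h(\nabla\cdot u)$ in $\pa_t h$ and $\frac{1}{N}h\nabla h$ in $\pa_t u$, are handled by a standard commutator/Moser analysis. Rewriting each term as a commutator plus a diagonal piece and applying \eqref{moser}, \eqref{comm0}, together with the Sobolev embedding $H^m\hookrightarrow W^{1,\infty}$ (valid since $m>1+d/2$) and the assumption $\|h\|_{L^\infty}\leq \sigma/2$, yields bounds of the form $C(\|u\|_{H^m}+\|h\|_{H^m})\bigl(\|\nabla\Lambda^s h\|_{L^2}^2+\|\Lambda^{s+1}u\|_{L^2}^2\bigr)$ after Cauchy--Schwarz and Young's inequality.

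The main step is the Riesz interaction contribution
\[
\sigma^{-N}\lambda\int \nabla(\Lambda^s h)\cdot \nabla\Lambda^{s+\alpha-d}\{(h+\sigma)^N-\sigma^N\}\,dx.
\]
Because $s+\alpha-d\leq s$ and $\nabla\{(h+\sigma)^N-\sigma^N\}$ has mean zero on $\T^d$, Lemma \ref{Sobolev_T} upgrades the Riesz factor to $\|\nabla\Lambda^s\{(h+\sigma)^N-\sigma^N\}\|_{L^2} = N\,\|\Lambda^s((h+\sigma)^{N-1}\nabla h)\|_{L^2}$. Applying \eqref{moser} with $\|(h+\sigma)^{N-1}\|_{L^\infty}\leq \max\{(\sigma/2)^{N-1},(3\sigma/2)^{N-1}\}$ and invoking Lemma \ref{tool} for the composition $(h+\sigma)^{N-1}$, followed by Cauchy--Schwarz, produces precisely the coefficient $2C_0\lambda$ on $\|\nabla\Lambda^s h\|_{L^2}^2$ together with the higher-order correction $C\lambda\|h\|_{H^m}(1+\|h\|_{H^m})^{m-2}\|\nabla\Lambda^s h\|_{L^2}^2$.

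The main difficulty is purely bookkeeping: one must ensure that the Riesz estimate yields the exact constant $C_0 = C_d(2\sigma)^{-1}N\max\{2^{-(N-1)},2^{N-1}\}$ used in Proposition \ref{higher_u}, and that the range $0\leq s\leq m-1$ is preserved everywhere (so that $s+\alpha-d\leq m-1$ and every Moser/commutator estimate has enough regularity to spend). The sub-Manev versus super-Manev distinction does \emph{not} intervene here, because the Riesz derivative acts on a quantity of regularity $H^{s+1}$ with $s+1\leq m$, so no iteration of the $\mathcal{R}_m(k)$ type is required and the single application of Lemma \ref{Sobolev_T} suffices.
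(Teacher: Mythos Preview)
Your proposal is correct and follows essentially the same strategy as the paper: differentiate the cross term in time, integrate by parts to shift $\nabla$ from the $\partial_t h$ contribution onto $\Lambda^s u$, split the pressure terms into their constant-in-$h$ leading parts (yielding the $\pm\frac{\sigma}{N}$ terms) plus quadratic remainders, control the convective and remainder terms by Moser-type product estimates with the Sobolev embedding $H^m\hookrightarrow W^{1,\infty}$, and handle the Riesz term by Lemma~\ref{Sobolev_T} followed by the composition estimate. The only packaging difference is that the paper applies Lemma~\ref{tool} directly to $\|\Lambda^{s+1}(h+\sigma)^N\|_{L^2}$, which isolates the leading coefficient $N\max\{(\sigma/2)^{N-1},(2\sigma)^{N-1}\}$ cleanly and gives $2C_0\lambda$ without the extra Moser constant; your route through $N\|\Lambda^s((h+\sigma)^{N-1}\nabla h)\|_{L^2}$ and \eqref{moser} picks up a $C_{d,s}$ factor on the leading term, so to match the stated constant exactly it is cleaner to invoke Lemma~\ref{tool} at order $s+1$ rather than Moser at order $s$.
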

	\begin{proof}
	Let $0\leq s\leq m-1$.  We integrate by parts to see that
	\bq\nonumber
	\begin{aligned}
		\frac{d}{dt} \int \nabla (\Lambda^s h) \cdot \Lambda^s u\, dx
&=\int \Lambda^s (u \cdot \nabla h) \Lambda^s(\nabla \cdot u)\, dx +\frac{1}{N} \int \Lambda^s \{(h+\sigma) (\nabla \cdot u) \} \Lambda^s(\nabla \cdot u)\, dx \\
& \quad 
-\int \nabla (\Lambda^s h) \cdot \Lambda^s(u\cdot \nabla u)\, dx 
-\frac{1}{N} \int \nabla (\Lambda^s h) \cdot \Lambda^s \{ (h+\sigma)\nabla h\}\, dx \\
& \quad 
-\nu \int \nabla (\Lambda^s h) \cdot \Lambda^s u \, dx
+\sigma^{-N} \lambda \int  \nabla  (\Lambda^s h) \cdot \Lambda^s \nabla \Lambda^{\alpha-d} \{ (h+\sigma)^N-\sigma^N\}\, dx\\
&=:  \sum_{i=1}^5 \sfL_i+\widetilde{R}_{s},
	\end{aligned}
	\eq 
	where we use Lemma \ref{lem_moser} to estimate the terms $\sfL_i$ ($1 \le i \le 4$) as follows:
	\[
	\begin{aligned}
		\sfL_1 
		&= \int \Lambda^s (u \cdot \nabla h) \Lambda^s(\nabla \cdot u)\, dx\\
		& \leq C (\|u\|_{L^{\infty}} \|\Lambda^{s} \nabla h\|_{L^2} + \|\nabla h\|_{L^{\infty}}\|\Lambda^s u\|_{L^2})\|\Lambda^{s}(\nabla \cdot u) \|_{L^2}\\
		&\leq C \|u\|_{L^{\infty}} \|\Lambda^{s} \nabla h\|_{L^2} \|\Lambda^{s+1}u \|_{L^2}+C \|\nabla h\|_{L^{\infty}}\|\Lambda^{s+1} u\|_{L^2}^2,\\
		\sfL_2
		&=\frac{1}{N} \int \Lambda^s (h \nabla \cdot u)  \Lambda^s(\nabla \cdot u)\, dx +\frac{\sigma}{N} \int | \Lambda^s (\nabla \cdot u) |^2\, dx \\
		& \leq C(\|h\|_{L^{\infty}} \|\Lambda^{s} (\nabla \cdot u)\|_{L^2} + \|\nabla \cdot u\|_{L^{\infty}}\|\Lambda^s h\|_{L^2}) \|\Lambda^{s}( \nabla \cdot u)\|_{L^2} + \frac{\sigma}{N} \|\Lambda^{s}(\nabla \cdot u) \|_{L^2}^2\\
		&\leq  C \|\nabla \cdot u\|_{L^{\infty}}\|\Lambda^s \nabla h\|_{L^2} \|\Lambda^{s+1} u\|_{L^2} + C\|h\|_{L^{\infty}} \|\Lambda^{s+1} u\|_{L^2}^2  + \frac{\sigma}{N} \|\Lambda^{s+1} u \|_{L^2}^2,\\
		\sfL_3 
		&=-\int \nabla  (\Lambda^s h) \cdot \Lambda^s(u\cdot \nabla u)\, dx  \\
		&\leq C (\|u\|_{L^{\infty}} \|\Lambda^{s+1} u\|_{L^2} + \|\nabla u\|_{L^{\infty}}\|\Lambda^s u\|_{L^2})\|\nabla \Lambda^{s} h\|_{L^2} \\
		&\leq C (\|u\|_{L^{\infty}} + \|\nabla u\|_{L^{\infty}}) \|\Lambda^{s+1} u\|_{L^2}\|\nabla \Lambda^{s} h\|_{L^2} ,
			\end{aligned}
	\]
	and
		\[
	\begin{aligned}
		\sfL_4 
		&= -\frac{1}{N} \int \nabla (\Lambda^s h) \cdot \Lambda^s ( h \nabla h)\, dx  -\frac{\sigma}{N} \int |\nabla (\Lambda^s h)|^2\, dx\\
		&\leq C \| \nabla \Lambda^{s} h\|_{L^2} ( \|h\|_{L^{\infty}}\|\nabla \Lambda^{s} h\|_{L^2}+ \|\nabla h\|_{L^{\infty}} \|\Lambda^{s} h\|_{L^2}) -\frac{\sigma}{N} \int | \nabla (\Lambda^s h) |^2\, dx\\
		&\leq C(  \|h\|_{L^{\infty}} + \|\nabla h\|_{L^{\infty}}  ) \| \nabla \Lambda^{s} h\|_{L^2}-\frac{\sigma}{N} \int | \nabla (\Lambda^s h) |^2\, dx
	\end{aligned}
	\]
	for some $C=C(s,\gamma) > 0$. 
	
	Thus, we collect the estimates for $\sfL_i$'s to get
	\bq\nonumber
	\begin{aligned}
		&\frac{d}{dt} \int \nabla (\Lambda^s h) \cdot \Lambda^s u\, dx
		+\nu \int \nabla (\Lambda^s h) \cdot \Lambda^s u\, dx 
		+\frac{\sigma}{N} \int |\nabla (\Lambda^s h) |^2\, dx\\
		&\quad \leq (C\|u\|_{L^{\infty}} + C\|\nabla \cdot u\|_{L^{\infty}} +C\|h\|_{L^{\infty}} + C\|\nabla h\|_{L^{\infty}} +\frac{\sigma}{N})  \|\Lambda^{s+1} u\|_{L^2}^2  \\
		&\qquad  +(C \|u\|_{L^{\infty}} + C \|\nabla \cdot u\|_{L^{\infty}} + C \|h\|_{L^{\infty}}+C\|\nabla h\|_{L^{\infty}}) \|\nabla \Lambda^s h\|_{L^2}^2  +\widetilde{R}_s.
	\end{aligned}
	\eq
	By using the neutrality condition \eqref{neu_con2}, Lemmas \ref{Sobolev_T} and \ref{tool}, we next estimate 
	\bq\nonumber
	\begin{aligned}
		\widetilde{R}_{s}
		&= \lambda \sigma^{-N} \int  \nabla(\Lambda^s h) \cdot \Lambda^s \nabla \Lambda^{\alpha-d} \{ (h+\sigma)^N-\sigma^N\}\, dx \\
		&\leq \lambda \sigma^{-N}  \|\Lambda^{s} \nabla h \|_{L^2} \|\Lambda^{s+\alpha-d} \nabla  \{ (h+\sigma)^N-\sigma^N\} \|_{L^2}\\
		&\leq \sigma^{-N} \lambda \|\Lambda^s \nabla h \|_{L^2} \|\Lambda^s \nabla \{ (h+\sigma)^N-\sigma^N\} \|_{L^2}\\
		&\leq 2C_0 \lambda \|\Lambda^s \nabla h \|_{L^2}^2 + C \lambda \|h\|_{H^m}(1+\|h\|_{H^m})^{m-2}\|\Lambda^s \nabla h \|_{L^2}^2. 
	\end{aligned}
	\eq 
Hence, we have
	\bq\nonumber
	\begin{aligned}
	&\frac{d}{dt} \int \nabla (\Lambda^s h) \cdot \Lambda^s u\, dx +\nu \int \nabla (\Lambda^s h) \cdot \Lambda^s u\, dx +\frac{\sigma}{N} \int |\nabla (\Lambda^s h) |^2\, dx\\
&\quad \leq (C\|u\|_{L^{\infty}} + C\|\nabla \cdot u\|_{L^{\infty}} +C\|h\|_{L^{\infty}} + C\|\nabla h\|_{L^{\infty}} +\frac{\sigma}{N})  \|\Lambda^{s+1} u\|_{L^2}^2  \\
&\qquad  +(C \|u\|_{L^{\infty}} + C \|\nabla \cdot u\|_{L^{\infty}} + C \|h\|_{L^{\infty}}+C\|\nabla h\|_{L^{\infty}} +2C_0\lambda+ C \lambda \|h\|_{H^m}(1+\|h\|_{H^m})^{m-2}) \|\nabla \Lambda^s h\|_{L^2}^2,  
	\end{aligned}
	\eq
	and applying the Sobolev inequality to the above yields the desired result. 
	\end{proof}

%
%
%
%
%

\section{Global existence of solutions: proof of Theorem \ref{main_thm1}}\label{sec:4}
\setcounter{equation}{0}

In this section, we present the global-in-time existence and uniqueness of the regular solutions $(h, u)$ to \eqref{np_ER2}-\eqref{ini_ER2} by extending the local-in-time solution to the global one based on the classical continuation argument. For this, we obtain the uniform-in-time bound estimates of the solution by using a priori estimates established in the previous section.
\begin{proposition}\label{prop}
	Let $T>0$, $m>1+d/2$, and $(h, u)\in \mathcal{C}([0, T); H^m(\T^d) \times H^m(\T^d))$ be a classical solution to \eqref{np_ER2}-\eqref{ini_ER2}. 
	Suppose that $\|u\|_{L^{\infty}([0, T];H^m)}+\|h\|_{L^{\infty}([0, T];H^m)} \leq \varepsilon_1^2 \ll 1$ so that 
\[
	\sup_{0\leq t\leq T}\|h(t)\|_{L^\infty} \leq \frac{\sigma}{2}.
\]
	Then there exists a positive constant $C^{\ast}$ independent of $T$ such that 
	\bq\nonumber
	\sup_{0\leq t\leq T}\|(h, u)(t)\|_{H^m}^2 \leq C^{\ast}\|(h_0, u_0)\|_{H^m}^2.
	\eq
\end{proposition}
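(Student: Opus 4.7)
The plan is to assemble the estimates of Section \ref{sec:3} into a single Lyapunov functional equivalent to $\|(h,u)\|_{H^m}^2$ that is non-increasing (and in fact exponentially decaying, which is what one needs again for Theorem \ref{main_thm1}). Following the outline of Section \ref{ssec_gc}, I would work with
\[
\mathcal{X}_m(t) := \|h\|_{L^2}^2 + \|u\|_{L^2}^2 + \|\Lambda^m h\|_{L^2}^2 + \sum_{k=0}^{k_0}\tilde\lambda^k \int (h+\sigma)^{k(N-2)}|\Lambda^{m-kl}u|^2\, dx + \mu \int \nabla(\Lambda^{m-1}h)\cdot \Lambda^{m-1}u\, dx,
\]
where $l=(d-\alpha)/2$ and $\tilde\lambda=\lambda\sigma^{-N}N^2$, $k_0$ are as in Proposition \ref{higher_u}(ii) (the sum collapsing to $\|\Lambda^m u\|_{L^2}^2$ in the sub-Manev case), and $\mu>0$ is a small parameter to be chosen. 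Under $\|h\|_{L^\infty}\le \sigma/2$, the weights $(h+\sigma)^{k(N-2)}$ are bounded above and below by positive constants depending only on $\sigma$ and $N$; combined with Remark \ref{rem2}, Lemma \ref{Sobolev_T}, and Young's inequality applied to the $\mu$-cross term, this gives $\mathcal{X}_m \sim \|(h,u)\|_{H^m}^2$ provided $\mu$ is sufficiently small.

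The next step is to differentiate $\mathcal{X}_m$ in time by summing Lemma \ref{zero_u}, Proposition \ref{higher_u} (in the version matching $\alpha$), and $\mu$ times Lemma \ref{higher_h} taken at $s = m-1$. The key cancellation is that the total-derivative terms $-\tfrac12 \sum_k \tilde\lambda^k \tfrac{d}{dt}\int (h+\sigma)^{k(N-2)}|\Lambda^{m-kl}u|^2\, dx$ produced by the iteration of Lemmas \ref{higher_it} and \ref{Gap} in the proof of Proposition \ref{higher_u}(ii) are, by the very construction of $\mathcal{X}_m$, exactly the time derivatives of the weighted terms sitting inside $\mathcal{X}_m$, and hence they are absorbed by $\tfrac{d}{dt}\mathcal{X}_m$. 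After estimating the hypocoercivity cross-term $\mu\nu\int\nabla(\Lambda^{m-1}h)\cdot \Lambda^{m-1}u\,dx$ by Young's inequality and promoting the low-order pressure contributions to the $H^m$ level via Remark \ref{rem2}, one arrives at a differential inequality of the schematic form
\[
\tfrac{d}{dt}\mathcal{X}_m + \nu\|u\|_{H^m}^2 + \tfrac{\mu\sigma}{N}\|\nabla \Lambda^{m-1}h\|_{L^2}^2 \le \bigl(C\varepsilon_1^2 + C_0\lambda + C_1\lambda^{k_0+1} + \mu C\bigr)\|(h,u)\|_{H^m}^2.
\]

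I would then close the estimate by a three-step smallness calibration, performed in the following order: (i) fix $\mu$ small compared to $\nu$ so that the $\mu C$ coefficient on the right is controlled by a fraction of $\mu\sigma/N$, and the dissipation $\nu\|u\|_{H^m}^2 + (\mu\sigma/N)\|\nabla \Lambda^{m-1}h\|_{L^2}^2$ becomes, via Poincar\'e and Lemma \ref{Sobolev_T}, coercive over all of $\|(h,u)\|_{H^m}^2$; (ii) fix $\lambda$ small so that $C_0\lambda + C_1\lambda^{k_0+1}$ is absorbed; (iii) invoke the bootstrap hypothesis $\|(h,u)\|_{H^m}\le \varepsilon_1^2$ to make the remaining $C\varepsilon_1^2$ coefficient arbitrarily small. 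The upshot is $\tfrac{d}{dt}\mathcal{X}_m + \xi \mathcal{X}_m \le 0$ for some $\xi>0$ independent of $T$; integrating in time gives $\mathcal{X}_m(t)\le \mathcal{X}_m(0)$, and the equivalence $\mathcal{X}_m\sim \|(h,u)\|_{H^m}^2$ then yields the uniform bound $\sup_{[0,T]}\|(h,u)(t)\|_{H^m}^2 \le C^*\|(h_0,u_0)\|_{H^m}^2$ with $C^*$ depending only on $\sigma, N, \mu$, and in particular independent of $T$.

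The main obstacle, I expect, is the bookkeeping in the super-Manev case. On the one hand, one must verify that the $C_1\lambda^{k_0+1}$ coefficient produced by $\mathcal{R}_m(k_0)$ in estimate \eqref{R2}, which is not small relative to $\|h\|_{H^m}^2$ by itself, is beaten by the hypocoercive dissipation $(\mu\sigma/N)\|\nabla \Lambda^{m-1}h\|_{L^2}^2$. Since this dissipation scales linearly in $\mu$ while the bad coefficient scales as $\lambda^{k_0+1}$, the smallness ordering ($\mu$ first, then $\lambda$) is essential; reversing it would make the estimate collapse. On the other hand, the equivalence $\mathcal{X}_m\sim \|(h,u)\|_{H^m}^2$ is not automatic for the super-Manev case, because the summands $\tilde\lambda^k\int (h+\sigma)^{k(N-2)}|\Lambda^{m-kl}u|^2\,dx$ are only lower-order in $\|u\|_{H^m}$; one must check that they are dominated by $\|\Lambda^m u\|_{L^2}^2 + \|u\|_{L^2}^2$ up to a constant depending only on $\sigma, N, k_0, l$, which reduces to Lemma \ref{Sobolev_T} together with the uniform two-sided bound on the weights.
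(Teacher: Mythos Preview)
Your proposal is correct and follows essentially the same route as the paper: you build the identical Lyapunov functional $\mathcal{X}_m$, sum Lemma \ref{zero_u}, Proposition \ref{higher_u}, and $\mu$ times Lemma \ref{higher_h} at $s=m-1$, and then calibrate $\mu$, $\lambda$, $\varepsilon_1$ in that order to obtain $\tfrac{d}{dt}\mathcal{X}_m + \xi\mathcal{X}_m \le 0$. The only imprecision is in your step (i), where the $\mu(\sigma/N)$-sized contribution on the right multiplies $\|\Lambda^m u\|_{L^2}^2$ and is absorbed by the $\nu\|\Lambda^m u\|_{L^2}^2$ dissipation (hence ``$\mu$ small compared to $\nu$''), not by the $\mu\sigma/N$ term on the left; this is exactly what the paper records as $2\nu > \mu(\sigma/N + N\nu^2/(2\sigma))$.
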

\begin{proof}
Since the proof of the sub-Manev potential case ($-2\leq \alpha-d \leq -1$) is rather parallel to that of the super-Manev potential case ($0<\alpha-d <-1$), we only provide the details of the proof for the latter case. 

Note that
\[
(1+\|h\|_{H^m})^{m-2} \leq (1+\varepsilon_1)^{m-2} <C, 
\]
thus it follows from Proposition \ref{higher_u} that 
\bq\nonumber
\begin{aligned}
	&\frac{1}{2} \frac{d}{dt}\left(
	\sum_{k=0}^{k_0}\tilde{\lambda}^k \int  (h+\sigma)^{k(N-2)} |\Lambda^{m-kl} u|^2\, dx
	+ \int  |\Lambda^m h|^2 \, dx  \right) + \nu \|\Lambda^m u\|_{L^2}^2  \\
	&\quad \leq  \lt(C(\e_1 + \hat\lambda \e_1 + \lambda^{k_0+1} \e_1) + C_1  \lambda^{k_0 + 1}\rt)\|\Lambda^m u\|_{L^2}^2 +  \lt(C(\e_1 + \hat\lambda \e_1+ \lambda^{k_0+1}\e_1 ) + C_1 \lambda^{k_0 + 1}\rt)\|\Lambda^m h\|_{L^2}^2,
\end{aligned}
\eq
where $\hat\lambda$, $\tilde\lambda$, $k_0$, and $C_1$ are appeared in Proposition \ref{higher_u}.
With Lemma \ref{higher_h} multiplied by a constant $\mu/2>0$, we deduce that
\bq\label{highers}
\begin{aligned}
&\frac{1}{2} \frac{d}{dt}\left(
\sum_{k=0}^{k_0} \tilde{\lambda}^k \int  (h+\sigma)^{k(N-2)} |\Lambda^{m-kl} u|^2\, dx
+ \int  |\Lambda^m h|^2 \, dx +  \mu \int \nabla (\Lambda^{m-1} h) \cdot \Lambda^{m-1} u\, dx \right) \\
&\quad +\nu \|\Lambda^m u\|_{L^2}^2  +\frac{\mu \sigma }{2N}\|\Lambda^{m-1} \nabla h\|_{L^2}^2 +\frac{\mu \nu}{2}  \int \nabla (\Lambda^{m-1} h) \cdot \Lambda^{m-1} u\, dx\\
&\qquad \leq  \lt(C(\e_1(1+ \mu) + \hat\lambda \e_1 + \lambda^{k_0+1} \e_1) + C_1  \lambda^{k_0 + 1}  + \frac{\mu\sigma }{2N} \rt)\|\Lambda^m u\|_{L^2}^2 \cr
&\qquad \quad +  \lt(C(\e_1(1+ \mu) + (\hat\lambda + \lambda^{k_0+1})\e_1+\lambda \mu \e_1 ) + C_1 \lambda^{k_0 + 1} +  C_0 \mu \lambda \rt)\|\Lambda^m h\|_{L^2}^2.
\end{aligned}
\eq
 We also recall from Lemma \ref{zero_u} that
\bq\label{zero}
\begin{aligned}
\frac{1}{2}\frac{d}{dt} \left( \|u\|_{L^2}^2 + \|h\|_{L^2}^2 \right)
+\lt(\nu - C\varepsilon_1- C_0 \lambda\rt) \|u\|_{L^2}^2\leq  \lt(C \varepsilon_1+ C_0\lambda\rt)\|\nabla h\|_{L^2}^2.
\end{aligned}
\eq 
Then we combine \eqref{highers} with \eqref{zero} and use Young's inequality to yield
\bq\nonumber
\begin{aligned}
\frac{d}{dt}{\mathcal X}_m &\leq -(2\nu - C \varepsilon_1- 2C_0  \lambda) \|u\|_{L^2}^2\\
&\quad -\lt(2\nu -C \varepsilon_1(1+\mu) -C\e_1 \hat\lambda - C\e_1 \lambda^{k_0+1}  - 2C_1  \lambda^{k_0 + 1} - \mu\lt( \frac{\sigma }{N} + \frac{N \nu^2}{2\sigma}\rt) \rt) \|\Lambda^m u\|_{L^2}^2 \cr 
&\quad -\lt(\frac{ \mu \sigma}{2N}- C \varepsilon_1(1 + \mu) - C\e_1(\hat\lambda + \lambda^{k_0+1})-C\lambda \mu \e_1 - 2C_0 \lambda(1+\mu) - 2C_1 \lambda^{k_0 + 1}
 \rt) \| \Lambda^m h \|_{L^2}^2,
\end{aligned}
\eq
where
\[
{\mathcal X}_m = \sum_{k=0}^{k_0} \tilde{\lambda}^k \int  (h+\sigma)^{k(N-2)} |\Lambda^{m-kl} u|^2\, dx
+\|u\|_{L^2}^2
+\|\Lambda^m h\|_{L^2}^2
+\|h\|_{L^2}^2
+\mu \int \nabla (\Lambda^{m-1} h) \cdot \Lambda^{m-1} u\, dx.
\]
Here, together with Remark \ref{rem2}, we also note that
\[
\|u\|_{H^m} \sim \|u\|_{L^2} + \|\Lambda^m u\|_{L^2}.
\]
Now we first take $\mu >0$ small enough such that 
\[
2\nu >  \mu\lt( \frac{\sigma }{N} + \frac{N \nu^2}{2\sigma}\rt)
\]
and
\[
{\mathcal X}_m \sim \|(h,u)\|_{H^m}^2.
\]
We notice that we can choose $\lambda > 0$ sufficiently small compared to $\sigma$ so that $\tilde \lambda \ll 1$, $ C_0 \lambda \ll 1$, and $  C_1 \lambda \ll 1$. This, together with taking $\e_1$ small enough, yields that there exists $\xi > 0$ such that 
\[
\frac{d}{dt}{\mathcal X}_m + \xi {\mathcal X}_m \leq 0.
\]
Hence we have
\bq\label{time_est}
\|(h, u)(t)\|_{H^m}^2  \leq C^{\ast}e^{- \xi t} \|(h_0, u_0)\|_{H^m}^2 \quad \mbox{for all } t \in [0,T]
\eq
for some $C^{\ast}>0$ independent of $T$. This concludes the desired result.
\end{proof}

\begin{proof}[Proof of Theorem \ref{main_thm1}]
	We set 
	\bq\label{vare_1}
	\varepsilon_2^2 := \frac{\varepsilon_1^2}{2(1+C^{\ast})}
	\eq
	for $\varepsilon_1 > 0$ and $C^{\ast} > 0$ given in Proposition \ref{prop}.
	By Theorem \ref{local}, the following set is nonempty:
	\bq\nonumber
	\mathcal{S}:=\lt\{ T>0\, :\, \sup_{0\leq t\leq T}\|(h, u)(t)\|_{H^m}^2\leq \varepsilon_1^2\rt\},
	\eq  
	and assume for a contradiction that $\sup\mathcal{S}=: T^{\ast}<\infty$. 
	Then by Proposition \ref{prop} and \eqref{vare_1}, we have
	\bq\nonumber
	\varepsilon_1^2 = \sup_{0\leq t\leq T^{\ast}}\|(h, u)(t)\|_{H^m}^2 \leq C^{\ast} \|(h_0, u_0)\|_{H^m}^2  \leq C^{\ast} \varepsilon_2^2=\frac{C^{\ast}\varepsilon_1^2}{2(1+C^{\ast})}  <\varepsilon_1^2
	\eq 
which is a contradiction. Hence, the solution exists in $\mathcal{C}(\mathbb{R}_{+}; H^m(\T^d) \times H^m(\T^d))$ and \eqref{largetime} directly follows from \eqref{time_est}.
\end{proof}

%
%
%
%
\section{A priori estimate of large-time behavior: proof of Theorem \ref{main_thm2}}\label{sec:5}
\setcounter{equation}{0}
In this section, we investigate the a priori large-time behavior for the $L^2$-norm of the sufficiently regular solution $(h, u)$ to the system \eqref{np_ER2}-\eqref{ini_ER2} without any smallness assumptions on the initial data $(h_0, u_0)$.

To show the temporal decay estimate \eqref{largetime3}, we first introduce a function $\mathcal{L}=\mathcal{L}(t)$ by
\bq\nonumber
\mathcal{L} := \int (h+\sigma)^N  |u-m_c|^2\, dx
+ \int |(h+\sigma)^N-\sigma^N|^2\, dx 
+\frac{1}{2}|m_c|^2,
\eq
where 
\bq\nonumber
m_c(t) =  \sigma^{-N} \int (h(t)+\sigma)^N u(t)\, dx.
\eq 
We recall that $m_c$ decays to zero exponentially fast as time goes to infinity:
\bq\label{exp_mc}
m_c(t) \leq m_c(0)e^{-\nu t}.
\eq
Since
\bq\nonumber
\begin{aligned}
	h_{min} \leq h(t, x)+ \sigma \leq \|h(t)\|_{L^{\infty}}+\sigma, \qquad 
	(\sigma-\|h(t)\|_{L^{\infty}})^{-1} \leq (h(t, x)+ \sigma )^{-1} \leq  h_{min}^{-1}
\end{aligned}
\eq  
for any $(x, t) \in \T^d \times \mathbb{R}_{+}$,
there exists a positive constant $C= C(\|h\|_{L^\infty}, h_{min}, \gamma)$ such that
\bq\label{h}
|(h(x, t)+\sigma)^N -\sigma^N| \leq  C |h(x, t)|
\eq 
by the mean value theorem and hence,
\bq\label{L0}
\int |u|^2  + |h|^2 \, dx \leq C \mathcal{L} +(C+\frac{1}{2})|m_c|^2,
\eq 
where a constant $C= C(\|h\|_{\infty}, h_{min}, \gamma)$ may be different from the constant $C$ in \eqref{h}.
From \eqref{L0} together with \eqref{exp_mc},
it is enough to show the exponential decay rate for $\mathcal{L}(t)$ in the proposition below. 
\begin{proposition}\label{estimate_Lt}
		Let $(h, u)$ be a global classical solution to \eqref{np_ER2} with sufficient integrability.
		Suppose that
		\begin{itemize}
			\item[(i)] $\inf_{(x, t) \in \T^d \times \mathbb{R}_{+}} h(x, t) +\sigma \ge h_{min} >0$ and 
			\item[(ii)] $(h,u)\in L^{ \infty}(\mathbb{R}_{+}; L^{\infty}(\T^d))\times L^{ \infty}(\mathbb{R}_{+}; (L^{\infty}(\T^d))^d)$.
		\end{itemize}
		Then, for sufficiently small $\lambda > 0$, we have 
		\bq\nonumber
		\mathcal{L}(t) \leq C\mathcal{L}(0) e^{-\mu t}
		\eq
		for $t \geq 0$, where $C>0$ is independent of $t$.
		\end{proposition}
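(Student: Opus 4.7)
The plan is to execute the Lyapunov-function strategy outlined in Section~1.3.2 of the excerpt, with the Lyapunov functional tailored to control $\mathcal{L}$ rather than some higher-order norm. First I would show that $\mathcal{L}(t) \sim \mathcal{E}(t) \sim \mathcal{E}^{\mu}(t)$ for small $\mu>0$. The equivalence $\mathcal{L} \sim \mathcal{E}$ uses assumptions (i)--(ii): the lower bound $h+\sigma\ge h_{min}>0$ and $\|h\|_{L^\infty}<\infty$ yield, via the mean value theorem, $|(h+\sigma)^N-\sigma^N|\sim |h|$, so the pressure term $\int f(\tfrac{N+2}{N},(h+\sigma)^N;\sigma^N)\,dx$ is comparable to $\|h\|_{L^2}^2$; the weighted kinetic term is comparable to $\|u-m_c\|_{L^2}^2$, which together with $|m_c|^2$ controls $\|u\|_{L^2}^2$. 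The negative Riesz term $-\lambda\sigma^{-N}\|\Lambda^{(\alpha-d)/2}\{(h+\sigma)^N-\sigma^N\}\|_{L^2}^2$ is absorbed into the pressure term provided $\lambda$ is small, using Lemma~\ref{Sobolev_T} (since $\alpha-d<0$). The equivalence $\mathcal{E}\sim\mathcal{E}^\mu$ follows from bounding the cross term $|\int(u-m_c)\cdot\nabla W*[(h+\sigma)^N-\sigma^N]\,dx|$ by $C\|u-m_c\|_{L^2}\|h\|_{L^2}$ using Young's convolution inequality and $\nabla W\in L^1$, then choosing $\mu$ small.

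Next I would derive the identity $\frac{d}{dt}\mathcal{E}+\mathcal{D}=0$. Differentiating the modulated kinetic energy and using the momentum equation in \eqref{np_ER2}, the pressure work cancels against the time derivative of the pressure functional $\int f\,dx$, the Riesz interaction cancels against the time derivative of the negative $\dot H^{(\alpha-d)/2}$ term (this is where the symmetry of $\Lambda^{\alpha-d}$ and the continuity equation $\partial_t(h+\sigma)^N = -\nabla\cdot((h+\sigma)^N u)$ are used), and the damping produces the dissipation $2\nu\int(h+\sigma)^N|u-m_c|^2\,dx$. Separately, multiplying the momentum equation by $\sigma^{-N}(h+\sigma)^N$ and integrating, the symmetry of $\nabla \Lambda^{\alpha-d}$ against the symmetric pair $((h+\sigma)^N,(h+\sigma)^N)$ kills the interaction term, leaving $m_c'(t)=-\nu m_c(t)$ and hence the dissipation $\nu|m_c|^2$.

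The main obstacle, and the key analytic point, is producing a dissipation rate for $h$ in $\mathcal{D}^\mu$. Differentiating the perturbation term and substituting both equations of \eqref{np_ER2}, I would compute
\[
-\mu\frac{d}{dt}\int(u-m_c)\cdot \nabla W*[(h+\sigma)^N-\sigma^N]\,dx = \mu\int \nabla h\cdot \nabla W*[(h+\sigma)^N-\sigma^N]\,dx + \mathcal{O}\bigl(\mu(\|u\|_{L^2}^2+|m_c|^2)\bigr)+\mu \lambda (\cdots),
\]
where the remainder collects terms from the convective derivatives, the pressure gradient minus its linearization, the damping applied to $u-m_c$, the Riesz interaction, and the time derivative hitting $(h+\sigma)^N$ inside the convolution. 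The leading term equals $\mu\int\nabla h\cdot\nabla U\,dx$ with $-\Delta U=(h+\sigma)^N-\sigma^N$ (using \eqref{eqn_W} and the remark identifying $W*g$ as the unique mean-zero weak solution of $-\Delta U=g$), so integrating by parts gives $\mu\int h[(h+\sigma)^N-\sigma^N]\,dx\gtrsim \mu h_{min}^{N-1}\|h\|_{L^2}^2$ by the mean value theorem applied to the strictly increasing function $s\mapsto(s+\sigma)^N$. The remainders are estimated in $L^2$ using $\|\nabla W*g\|_{L^2}\le C\|g\|_{L^2}$ and \eqref{h}, producing controllable multiples of $\|u\|_{L^2}^2$, $|m_c|^2$, and small-$\lambda$ multiples of $\|h\|_{L^2}^2$.

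Finally, choosing $\mu$ small enough that (a) the remainder absorbs into the $2\nu\|u-m_c\|_{L^2}^2$ and $\nu|m_c|^2$ already present in $\mathcal{D}$ and (b) the $\mu h_{min}^{N-1}\|h\|_{L^2}^2$ gain survives, and then taking $\lambda$ small enough to absorb the Riesz-induced remainders, I obtain $\mathcal{D}^\mu(t)\gtrsim \|h\|_{L^2}^2+\|u\|_{L^2}^2+|m_c|^2\gtrsim \mathcal{E}^\mu(t)$. Combined with \eqref{e_pertu} this yields $\frac{d}{dt}\mathcal{E}^\mu+\mu_0 \mathcal{E}^\mu\le 0$ for some $\mu_0>0$, hence $\mathcal{E}^\mu(t)\le \mathcal{E}^\mu(0)e^{-\mu_0 t}$, and transferring back through $\mathcal{L}\sim \mathcal{E}\sim\mathcal{E}^\mu$ together with \eqref{exp_mc} and \eqref{L0} completes the proof of $\mathcal{L}(t)\le C\mathcal{L}(0)e^{-\mu t}$.
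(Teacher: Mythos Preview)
Your overall strategy matches the paper's proof exactly: establish $\mathcal{L}\sim\mathcal{E}\sim\mathcal{E}^\mu$, derive the energy identity $\frac{d}{dt}\mathcal{E}+\mathcal{D}=0$ (the paper's Lemma~\ref{lem_modenergy}), and then show $\mathcal{D}^\mu\gtrsim\mathcal{L}$ (the paper's Lemma~\ref{lower_Dmu}) by exploiting the Newtonian perturbation to generate dissipation in $h$.

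There is, however, one genuine gap in your treatment of the pressure contribution. You linearize the pressure gradient, keeping $\mu\int\nabla h\cdot\nabla W*[(h+\sigma)^N-\sigma^N]\,dx$ as the good term and relegating the nonlinear part $\tfrac{\mu}{N}\int h\nabla h\cdot\nabla W*[\ldots]\,dx$ to the remainder, which you then claim is ``controllable'' as a multiple of $\|u\|_{L^2}^2$, $|m_c|^2$, or a small-$\lambda$ multiple of $\|h\|_{L^2}^2$. This is not true: integrating by parts gives $\tfrac{\mu}{2N}\int h^2[(h+\sigma)^N-\sigma^N]\,dx$, which is neither sign-definite nor small (it carries a factor $\|h\|_{L^\infty}$, assumed bounded but \emph{not} small in this theorem), so it cannot be absorbed as you describe.

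The fix is exactly what the paper does: do not linearize. Keep the full pressure term
\[
\sfJ_2=\frac{\mu}{N}\int (h+\sigma)\nabla h\cdot\nabla W*[(h+\sigma)^N-\sigma^N]\,dx
=\frac{\mu}{2N}\int\bigl[(h+\sigma)^2-\sigma^2\bigr]\bigl[(h+\sigma)^N-\sigma^N\bigr]\,dx,
\]
and observe that both factors have the sign of $h$ (since $h+\sigma>0$), so the integrand is pointwise nonnegative and in fact $\gtrsim |(h+\sigma)^N-\sigma^N|^2$ uniformly under assumptions (i)--(ii). This yields the needed $c_1\mu\|(h+\sigma)^N-\sigma^N\|_{L^2}^2$ directly, with no nonlinear pressure remainder to control. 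With this correction, the rest of your outline goes through and coincides with the paper's argument.
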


For the proof of Proposition \ref{estimate_Lt}. we start with the following auxiliary lemma whose proof can be found in \cite{C}. 

\begin{lemma}\label{square form}
	Let $r_0, \tilde{r}>0$ and $\gamma\ge1$ be given constants, and set
	\bq\label{f}
	f(\gamma, r; r_0) = r \int_{r_0}^r \frac{s^{\gamma}-r_0^{\gamma}}{s^2}\, ds
	\eq
	for $r\in [0, \tilde{r}]$.
	Then there exists a constant $C\ge 1$ such that
	\bq\nonumber
	\frac{1}{C(\gamma, r_0, \tilde{r})} (r-r_0)^2 \leq f(\gamma, r; r_0) \leq C(\gamma, r_0, \tilde{r})(r-r_0)^2
	\eq
	for all $r\in [0, \tilde{r}]$.
\end{lemma}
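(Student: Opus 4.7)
The plan is to reduce the quadratic comparison to a compactness/continuity argument, using that $f$ is a smooth convex function on $(0,\infty)$ with a nondegenerate minimum at $r_0$.

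First I would differentiate $f(\gamma,r;r_0)=r\int_{r_0}^{r}(s^\gamma-r_0^\gamma)/s^2\,ds$ directly. The fundamental theorem of calculus gives
\[
f'(r)=\int_{r_0}^{r}\frac{s^\gamma-r_0^\gamma}{s^2}\,ds+\frac{r^\gamma-r_0^\gamma}{r},
\]
so $f(r_0)=0$ and $f'(r_0)=0$. A second differentiation and simplification yield $f''(r)=\gamma r^{\gamma-2}>0$ for all $r>0$. Thus $f$ is strictly convex on $(0,\infty)$ and $r=r_0$ is its unique interior minimum on $[0,\tilde r]$, with minimum value $0$; in particular $f(r)>0$ for every $r\in[0,\tilde r]\setminus\{r_0\}$.

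Next I would verify that $f$ extends continuously to $r=0$. For $\gamma>1$ one may integrate explicitly to obtain the closed form $f(\gamma,r;r_0)=\frac{1}{\gamma-1}\bigl(r^\gamma-\gamma r_0^{\gamma-1}r+(\gamma-1)r_0^\gamma\bigr)$, which is a polynomial and therefore continuous on $[0,\tilde r]$; for $\gamma=1$ one gets $f(1,r;r_0)=r\log(r/r_0)+r_0-r$, whose continuous extension at $r=0$ follows from $r\log r\to 0$. Hence in both cases $f$ is continuous on the compact interval $[0,\tilde r]$.

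Now I would introduce the quotient
\[
g(r):=\frac{f(\gamma,r;r_0)}{(r-r_0)^2}\qquad \text{for } r\in[0,\tilde r]\setminus\{r_0\},
\]
and extend it at $r=r_0$ by Taylor's theorem: since $f(r_0)=f'(r_0)=0$ and $f''(r_0)=\gamma r_0^{\gamma-2}$,
\[
f(\gamma,r;r_0)=\tfrac{1}{2}\gamma r_0^{\gamma-2}(r-r_0)^2+o\bigl((r-r_0)^2\bigr)\quad\text{as } r\to r_0,
\]
so $g$ extends continuously with $g(r_0)=\tfrac{\gamma}{2}r_0^{\gamma-2}>0$. Combined with the previous step, $g$ is continuous and strictly positive on the compact set $[0,\tilde r]$, hence it attains a positive maximum $M$ and a positive minimum $m$ depending only on $\gamma,r_0,\tilde r$. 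Setting $C=\max\{M,1/m,1\}$ yields the claimed two-sided bound.

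There is no genuine obstacle here; the only mild subtlety is to handle $r$ near $0$ when $\gamma<2$, where $f''$ blows up. One might be tempted to use the integral Taylor remainder $f(r)=\gamma\int_{r_0}^{r}(r-s)s^{\gamma-2}\,ds$ and bound $s^{\gamma-2}$ pointwise, but this fails for $\gamma$ close to $1$ as $s\to 0^+$. The compactness/continuity route avoids this issue entirely, since the only potential singularity of $g$ in $[0,\tilde r]$ is the removable one at $r=r_0$, while $r=0$ is a perfectly ordinary interior point of the graph where $g(0)=f(\gamma,0;r_0)/r_0^2$ is a finite positive number.
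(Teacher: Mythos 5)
Your proof is correct. Note that the paper itself does not prove this lemma at all: it simply cites \cite{C} for the proof, so there is no in-paper argument to compare against; the standard argument in that reference is the same elementary computation you carry out. Your route --- computing $f(r_0)=f'(r_0)=0$ and $f''(r)=\gamma r^{\gamma-2}>0$, resolving the apparent $0\cdot\infty$ at $r=0$ via the closed form ($\frac{1}{\gamma-1}(r^\gamma-\gamma r_0^{\gamma-1}r+(\gamma-1)r_0^\gamma)$ for $\gamma>1$, $r\log(r/r_0)+r_0-r$ for $\gamma=1$), and then applying continuity and compactness to the quotient $g(r)=f/(r-r_0)^2$ with the removable singularity at $r_0$ handled by Taylor's theorem --- is complete and gives constants depending only on $\gamma, r_0, \tilde r$, exactly as claimed. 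Two cosmetic remarks: your phrase ``unique interior minimum on $[0,\tilde r]$'' tacitly assumes $r_0<\tilde r$ (which is the only case of interest in the application, and your compactness argument works verbatim even if $r_0\ge\tilde r$, since then $g$ has no singularity on $[0,\tilde r]$); and it is worth stating explicitly, as you essentially do, that positivity of $g$ at the endpoint $r=0$ comes from $f(\gamma,0;r_0)=r_0^\gamma>0$, so no degeneracy occurs there even though $f''$ blows up as $r\to0^+$ for $\gamma<2$.
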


Then we consider the following modulated energy function $\mathcal{E}$:
\bq\nonumber
\begin{aligned}
	\mathcal{E} = & \int_{\T^d} (h+\sigma)^N |u-m_c|^2\, dx 
	+\frac{2}{N(N+2)} \int f\lt(\, \frac{N+2}{N}, (h+\sigma)^N; \sigma^N \, \rt) dx \\
	& -\frac{\lambda }{\sigma^{N}} \int | \Lambda^{\frac{\alpha-d}{2}} \{ (h+\sigma)^N -\sigma^N\}|^2\, dx+ \frac{1}{2}|m_c(t)|^2,
\end{aligned}
\eq 
where $f$ is given in \eqref{f}.
Then, it follows from Lemma \ref{square form} that 
\bq\nonumber
 f \lt(\frac{N+2}{N}, (h(x)+\sigma)^N; \sigma^N \rt) \sim |(h+\sigma)^N -\sigma^N|^2,
\eq  
and moreover, it is not difficult to check that 
\bq\nonumber
\int|(h+\sigma)^N -\sigma^N|^2\, dx \sim \int |(h+\sigma)^N -\sigma^N|^2-\lambda|\Lambda^{\frac{\alpha-d}{2}}\{(h+\sigma)^N -\sigma^N\}|^2\, dx 
\eq
by Lemma \ref{Sobolev_T} with small $0<\lambda <1$. Thus we attain $\mathcal{E}(t)\sim\mathcal{L}(t)$.

\begin{lemma}\label{lem_modenergy}
Let $(h, u)$ be a global classical solution to \eqref{np_ER2} with sufficient integrability. Then we have
	\bq\nonumber
	\frac{d}{dt} \mathcal{E}(t) +\mathcal{D}(t) =0,
	\eq
	where 
	\bq \nonumber
	\mathcal{D}(t)=2\nu \int (h+\sigma)^N |u-m_c|^2\, dx + \nu |m_c|^2.
	\eq
\end{lemma}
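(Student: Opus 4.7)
The plan is to differentiate each of the four pieces of $\mathcal{E}$ in turn and arrange the resulting terms into pairs that cancel, leaving only $-\mathcal{D}$. The starting observation is that $\tilde\rho:=(h+\sigma)^N$ satisfies a continuity equation $\partial_t\tilde\rho+\nabla\cdot(\tilde\rho u)=0$, obtained by multiplying the $h$-equation in \eqref{np_ER2} by $N(h+\sigma)^{N-1}$, together with $\int\tilde\rho\,dx=\sigma^N$ from the neutrality condition \eqref{neu_con2}. I would first verify that $m_c'(t)=-\nu m_c(t)$: differentiating $m_c=\sigma^{-N}\int\tilde\rho\,u\,dx$ and using the conservation of $\tilde\rho$ together with the $u$-equation, the material-derivative pieces combine to $\sigma^{-N}\int\tilde\rho(\partial_t u+u\cdot\nabla u)\,dx$; the pressure contribution $-\tfrac{1}{N(N+2)}\int\nabla(h+\sigma)^{N+2}\,dx$ vanishes by periodicity, and the Riesz contribution vanishes because $\nabla\Lambda^{\alpha-d}$ is skew-adjoint on mean-zero functions. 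Hence $\tfrac{d}{dt}\tfrac12|m_c|^2=-\nu|m_c|^2$, which supplies one piece of $\mathcal{D}$.

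Next I compute $\tfrac{d}{dt}\int\tilde\rho|u-m_c|^2\,dx$. After substituting the $u$-equation and rewriting $\int\partial_t\tilde\rho\,|u-m_c|^2\,dx$ via continuity and integration by parts, the two cubic convection terms cancel exactly (because $\int\tilde\rho u\cdot\nabla|u-m_c|^2\,dx = 2\int\tilde\rho(u-m_c)\cdot(u\cdot\nabla)u\,dx$), and the $m_c'$ contribution drops since $\int\tilde\rho(u-m_c)\,dx=0$. What survives are three pieces: a pressure integral $-\tfrac{2}{N}\int\tilde\rho(u-m_c)\cdot(h+\sigma)\nabla h\,dx$, the damping $-2\nu\int\tilde\rho|u-m_c|^2\,dx$ (the second piece of $\mathcal{D}$), and a Riesz integral $2\sigma^{-N}\lambda\int\tilde\rho(u-m_c)\cdot\nabla\Lambda^{\alpha-d}(\tilde\rho-\sigma^N)\,dx$.

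For the pressure term I use the algebraic identity $\tilde\rho(h+\sigma)\nabla h=\tfrac{1}{N+2}\nabla(h+\sigma)^{N+2}$, integrate by parts (the $m_c$ part drops because $m_c$ is spatially constant), and obtain $\tfrac{2}{N(N+2)}\int(\nabla\cdot u)\,[(h+\sigma)^{N+2}-\sigma^{N+2}]\,dx$. The matching cancellation comes from $f_0(r):=f\bigl(\tfrac{N+2}{N},r;\sigma^N\bigr)$, which a direct computation shows satisfies $r f_0'(r)-f_0(r)=r^{(N+2)/N}-\sigma^{N+2}$; combined with the conservation law for $\tilde\rho$ this yields
\[
\partial_t f_0(\tilde\rho)+\nabla\cdot(f_0(\tilde\rho)u)+[(h+\sigma)^{N+2}-\sigma^{N+2}]\,\nabla\cdot u=0,
\]
so that $\tfrac{2}{N(N+2)}\tfrac{d}{dt}\int f_0(\tilde\rho)\,dx$ exactly cancels the pressure contribution. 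For the Riesz term the $m_c$ piece again vanishes (by $\int\tilde\rho\nabla\Lambda^{\alpha-d}(\tilde\rho-\sigma^N)\,dx=0$), while integration by parts together with self-adjointness of $\Lambda^{\alpha-d}$ and the continuity equation rewrites the remainder as $\sigma^{-N}\lambda\,\tfrac{d}{dt}\int|\Lambda^{(\alpha-d)/2}(\tilde\rho-\sigma^N)|^2\,dx$, exactly cancelling the derivative of the third term of $\mathcal{E}$. Summing the four contributions yields $\tfrac{d}{dt}\mathcal{E}+\mathcal{D}=0$.

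The main obstacle is bookkeeping: one must choose the exponent in $f_0$ so that $r f_0'(r)-f_0(r)$ reproduces the same power of $(h+\sigma)$ produced by the pressure identity, which forces the choice $(N+2)/N$ and the normalization $\tfrac{2}{N(N+2)}$. The Riesz cancellation likewise relies on only the mean-zero quantity $\tilde\rho-\sigma^N$ appearing, so that $\int\tilde\rho u\cdot\nabla\Lambda^{\alpha-d}(\tilde\rho-\sigma^N)\,dx$ becomes a perfect time derivative after integration by parts against the continuity equation. Up to these two structural observations the computation is routine integration by parts.
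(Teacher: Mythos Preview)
Your proposal is correct and follows essentially the same route as the paper: both differentiate the kinetic-energy piece, identify the pressure, damping, and Riesz contributions, and then show that the pressure term matches the time derivative of the $f$-integral while the Riesz term matches the time derivative of the $\Lambda^{(\alpha-d)/2}$-integral, with the $m_c$ piece handled separately via $m_c'=-\nu m_c$. The only differences are expository: you spell out the cancellation of the convection terms and the identity $rf_0'(r)-f_0(r)=r^{(N+2)/N}-\sigma^{N+2}$ explicitly, whereas the paper absorbs these into ``we can easily find'' and writes $\sfI_1=-\frac{1}{N(N+2)}\frac{d}{dt}\int f\,dx$ directly.
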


\begin{proof}
		From the system \eqref{np_ER2} with $\gamma =(N+2)/N$, we can easily find 
	\bq\nonumber
	\begin{aligned}
		&\frac{1}{2}  \frac{d}{dt} \int (h+\sigma)^N |u-m_c|^2\, dx 	+ \nu \int (h+\sigma)^N |u-m_c|^2 \, dx\\
		&\quad=- \frac{1}{ N(N+2) } \int u \cdot \nabla(h+\sigma)^{N+2}\, dx \\
		&\qquad + \nu  \int (h+\sigma)^N m_c \cdot (u-m_c)\, dx + \frac{\lambda}{\sigma^N} \int (h+\sigma)^N (u-m_c) \cdot \nabla \Lambda^{\alpha-d}\{(h+\sigma)^N -\sigma^N\}\, dx\\
		&\quad=:\sum_{i=1}^3 \sfI_{i}.
	\end{aligned}
	\eq
	For $\sfI_1$ and $\sfI_2$, we obtain
	\bq\nonumber
	\sfI_1= -\frac{1}{N(N+2)}\frac{d}{dt} \int f\lt(\frac{N+2}{N}, (h+\sigma)^N; \sigma^N\rt)\, dx 
	\eq	
	and 
	\bq\nonumber
	\sfI_2 = \nu \sigma^{N} |m_c|^2 -\nu |m_c|^2 \int (h+\sigma)^N \, dx =0.
	\eq 

	For $\sfI_3$, one uses the integration by parts to get
	\bq\nonumber
	\begin{aligned}
	\sfI_3 &=\int (h+\sigma)^N (u-m_c) \cdot \nabla \Lambda^{\alpha-d}\{(h+\sigma)^N -\sigma^N \} \, dx\\
	&= \frac{\lambda}{\sigma^N}\int \pa_t \{(h+\sigma)^N -\sigma^N\}\, \Lambda^{\alpha-d} \{(h+\sigma)^N-\sigma^N \}\, dx 
	\\
	&= \frac{\lambda}{\sigma^N} \int  \pa_t \lt( \Lambda^{\frac{\alpha-d}{2}}\{(h+\sigma)^N -\sigma^N\} \rt)\, \Lambda^{\frac{\alpha-d}{2}} \{(h+\sigma)^N -\sigma^N\}\, dx \\
	&=\frac{\lambda}{2 \sigma^N} \frac{d}{dt} \int  |\Lambda^{\frac{\alpha-d}{2}} \{(h+\sigma)^N-\sigma^N \} |^2 \, dx,
	\end{aligned}
	\eq
	since
	\bq\nonumber
	\begin{aligned}
	- \int \nabla \cdot( (h+\sigma)^N m_c)\, \Lambda^{\alpha-d}\{(h+\sigma)^N -\sigma^N \} \, dx  &=
	- \int \nabla \{ (h+\sigma)^N-\sigma^N \} \cdot m_c\, \Lambda^{\alpha-d}\{(h+\sigma)^N -\sigma^N \} \, dx\\
	& = \int \{ (h+\sigma)^N-\sigma^N \} m_c \cdot \nabla \Lambda^{\alpha-d}\{(h+\sigma)^N -\sigma^N \} \, dx\\
	& =0
	\end{aligned}
	\eq
by the symmetry of the operator $\Lambda^{\alpha-d}$ and the fact $\nabla m_c =0$. On the other hand,  one has
	\bq\nonumber
	 \frac{d}{dt}  |m_c(t)|^2 = 2 m_c(t) m'_c(t)  = -2 \nu |m_c(t)|^2,
	\eq 
and collect all the estimates to yield the desired result.
\end{proof}

Due to the absence of the dissipation rate for the density $h$ in the term $\mathcal{D}(t)$, we need to consider the perturbed energy functional which is eventually equivalent to the total fluctuated energy functional $\mathcal{L}(t)$. Motivated from \cite{CJ21}, we consider the perturbed energy functional: 
\bq\nonumber
\mathcal{E}^{\mu} (t) = \mathcal{E}(t) + \mu \int  (u-m_c) \cdot \nabla W*[(h+\sigma)^N -\sigma^N]\,dx
\eq 
for sufficiently small $\mu>0$ which will be determined in Lemma \ref{lower_Dmu}.
We note here that
$ \mathcal{E}(t) \sim \mathcal{E}^{\mu}(t)$. 
Then it is clear to get 
\bq\label{e_pertu}
\frac{d}{dt} \mathcal{E}^{\mu}(t) +\mathcal{D}^\mu(t) =0,
\eq
where 
\[
\mathcal{D}^{\mu}(t):= \mathcal{D}(t) - \mu \frac{d}{dt}\int  (u-m_c) \cdot \nabla W*[(h+\sigma)^N -\sigma^N]\,dx.
\]
Here the potential $W$ is given as in \eqref{eqn_W}. From \eqref{e_pertu} along with the equivalent relation $\mathcal{L}(t) \sim \mathcal{E}(t) \sim \mathcal{E}^{\mu} (t)$, 
Proposition \ref{estimate_Lt} is a direct result of the following.

\begin{lemma}\label{lower_Dmu}
	Let $(h, u)$ be any global classical solutions to the system \eqref{np_ER2}-\eqref{ini_ER2}.
	Under the same assumptions in Proposition \ref{estimate_Lt}, there exists a positive constant $C>0$ such that 
	\bq\nonumber
	\mathcal{D}^{\mu} (t) \ge C \mathcal{L}(t), \qquad t\ge0.
	\eq
\end{lemma}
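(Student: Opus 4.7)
The approach is to exploit the Poisson structure of $W$ to manufacture an $\|h\|_{L^2}^2$-dissipation from the pressure gradient. Set
\[
\psi(t,x) := W \ast \bigl\{(h(t,\cdot)+\sigma)^N - \sigma^N\bigr\}(x),
\]
which, by the neutrality condition \eqref{neu_con2} and the remark following \eqref{e_pertu}, is the unique zero-mean weak solution of $-\Delta\psi = (h+\sigma)^N-\sigma^N$. Writing $\mathcal{K}(t):=\int (u-m_c)\cdot\nabla\psi\,dx$ we have $\mathcal{D}^{\mu}=\mathcal{D}-\mu\tfrac{d}{dt}\mathcal{K}$, so the task is to show that $-\mu\tfrac{d}{dt}\mathcal{K}$ supplies the missing $\|h\|_{L^2}^2$-dissipation while the residual errors are absorbable into $\mathcal{D}$ for $\mu,\lambda$ small.

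Using $\int\nabla\psi\,dx=0$ to kill the $m_c'$-contribution and the momentum equation in \eqref{np_ER2}, I would expand
\[
\tfrac{d}{dt}\mathcal{K} = \int \partial_t u\cdot\nabla\psi\,dx + \int u\cdot\nabla\partial_t\psi\,dx,
\]
and single out the pressure term. Writing $(h+\sigma)\nabla h = \tfrac{1}{2}\nabla(h+\sigma)^2$, integrating by parts, inserting/subtracting the constant $\sigma^2$ (whose integral vanishes by neutrality), and invoking $-\Delta\psi=(h+\sigma)^N-\sigma^N$ converts this contribution to
\[
-\frac{1}{2N}\int \bigl[(h+\sigma)^2-\sigma^2\bigr]\bigl[(h+\sigma)^N-\sigma^N\bigr]\,dx.
\]
Under $h+\sigma\ge h_{\min}>0$ and $\|h\|_{L^{\infty}}<\infty$, both bracketed factors share the sign of $h$ and are each comparable to $|h|$ by the mean value theorem, so the quantity above is bounded by $-c_\star\|h\|_{L^2}^2$ for some $c_\star>0$ depending only on $\gamma,h_{\min},\|h\|_{L^{\infty}}$. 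Multiplying by $-\mu$ injects exactly the $+c_\star\mu\|h\|_{L^2}^2$ coercivity needed in $\mathcal{D}^{\mu}$.

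The remaining contributions to $\tfrac{d}{dt}\mathcal{K}$ are: (a) the convection $-\int(u\cdot\nabla u)\cdot\nabla\psi\,dx$, handled by shifting two derivatives onto $\psi$ via integration by parts and exploiting $\|\nabla^2\psi\|_{L^2}\lesssim\|(h+\sigma)^N-\sigma^N\|_{L^2}\lesssim\|h\|_{L^2}$ ($L^2$-boundedness of Riesz transforms on $\T^d$), producing a bound $\lesssim\|u\|_{L^{\infty}}^2\|h\|_{L^2}$; (b) the damping $-\nu\int u\cdot\nabla\psi\,dx\lesssim\nu\|u\|_{L^2}\|h\|_{L^2}$; (c) the Riesz interaction, bounded by $C\lambda\|h\|_{L^2}^2$ via Lemma \ref{Sobolev_T} and the chain rule (using $h+\sigma\ge h_{\min}$); (d) the $\partial_t\psi$-piece $\int u\cdot\nabla\partial_t\psi\,dx$, which after substituting $\partial_t\psi=-W\ast\nabla\cdot((h+\sigma)^N u)$ is controlled by $\|u\|_{L^2}^2$, since $\nabla W\ast\nabla\cdot$ is $L^2$-bounded (it is the Leray projector applied to $(h+\sigma)^N u\in L^{\infty}\cdot L^2$). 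Young's inequality then converts each into $\tfrac{c_\star}{4}\|h\|_{L^2}^2+C(\|u-m_c\|_{L^2}^2+|m_c|^2)$.

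Combining, and using $(h+\sigma)^N\ge h_{\min}^N$ to get $\mathcal{D}\ge 2\nu h_{\min}^N\|u-m_c\|_{L^2}^2+\nu|m_c|^2$, choosing $\mu$ small enough that $\mu C\ll \nu h_{\min}^N$ and $\lambda$ small enough that $C\lambda\le \tfrac{c_\star}{4}$, I would obtain
\[
\mathcal{D}^{\mu}(t)\ge c\bigl(\|u-m_c\|_{L^2}^2+\|h\|_{L^2}^2+|m_c|^2\bigr)\ge c'\,\mathcal{L}(t),
\]
via \eqref{L0} and $\|u\|_{L^2}\lesssim\|u-m_c\|_{L^2}+|m_c|$. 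The \textbf{main obstacle} is the convection term: the hypothesis provides only $L^{\infty}$-control on $u$ (no derivatives), so one must push both derivatives onto $\psi$ and invoke a Calder\'on--Zygmund bound to turn them into the $\|h\|_{L^2}$-factor that the Poisson relation $-\Delta\psi=(h+\sigma)^N-\sigma^N$ supplies for free; an analogous issue arises for the $\partial_t\psi$-term and is resolved identically via the $L^2$-boundedness of the Helmholtz/Leray projection.
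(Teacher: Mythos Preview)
Your overall strategy coincides with the paper's: the pressure contribution produces the $\|h\|_{L^2}^2$ coercivity exactly as you describe (this is their term $\sfJ_2$), and your treatment of the damping (b), the Riesz interaction (c), and the $\partial_t\psi$-piece (d) via $L^2$-boundedness of $\nabla W\ast\nabla\cdot$ all match the paper's estimates for $\sfJ_3$, $\sfJ_4$, $\sfJ_5$.

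The gap is in your handling of the convection term (a). First, one integration by parts does \emph{not} push both derivatives onto $\psi$: writing $u\cdot\nabla u=\nabla\cdot(u\otimes u)-(\nabla\cdot u)u$ gives
\[
\int(u\cdot\nabla u)\cdot\nabla\psi\,dx=-\int(u\otimes u):\nabla^2\psi\,dx-\int(\nabla\cdot u)\,(u\cdot\nabla\psi)\,dx,
\]
and the residual $(\nabla\cdot u)$-term cannot be controlled under the hypothesis $u\in L^\infty$ alone (no derivative control on $u$ is assumed). Second, even the bound you state, $\lesssim\|u\|_{L^\infty}^2\|h\|_{L^2}$, does not close: Young's inequality converts it to $\tfrac{c_\star}{4}\|h\|_{L^2}^2+C\|u\|_{L^\infty}^4$, and the second summand is an uncontrolled \emph{constant}, not something absorbable into $\|u-m_c\|_{L^2}^2+|m_c|^2$. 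Every error term must carry at least one factor of $\|u-m_c\|_{L^2}$ (or $\|h\|_{L^2}$) for the estimate to close.

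The paper's remedy is precisely \emph{not} to treat the convection separately: it groups the convection contribution $\sfJ_1$ with the $\partial_t\psi$-contribution $\sfJ_5$ and, after splitting $(h+\sigma)^Nu=(h+\sigma)^N(u-m_c)+[(h+\sigma)^N-\sigma^N]m_c+\sigma^Nm_c$ inside $\sfJ_5$, arrives at
\[
\sfJ_1+\sfJ_5=-\mu\int(u-m_c)\otimes(u-m_c):\nabla^2\psi\,dx+\mu\int(u-m_c)\cdot\nabla W\ast\bigl[\nabla\cdot\bigl((h+\sigma)^N(u-m_c)\bigr)\bigr]\,dx.
\]
Both pieces are now genuinely \emph{quadratic} in $u-m_c$: the first is bounded by $\mu\|u-m_c\|_{L^\infty}\|(h+\sigma)^{N/2}(u-m_c)\|_{L^2}\|\nabla^2\psi\|_{L^2}$ and the second by $C\mu\|(h+\sigma)^{N/2}(u-m_c)\|_{L^2}^2$. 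Each carries a factor $\|(h+\sigma)^{N/2}(u-m_c)\|_{L^2}$, so Young's inequality (with weights $\mu^{1/2}$, $\mu^{3/2}$) closes against $\mathcal{D}$. The crucial idea you are missing is this regrouping that manufactures the $(u-m_c)\otimes(u-m_c)$ structure; handled in isolation, the convection piece is not controllable under $u\in L^\infty$ alone.
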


\begin{proof}
Direct computation shows  
\bq\nonumber
\begin{aligned}
	&-\mu \frac{d}{dt}\int  (u-m_c) \cdot \nabla W*[(h+\sigma)^N -\sigma^N]\,dx \cr
	&\quad =-\mu \int u \otimes (u-m_c) : \nabla^2 W*[(h+\sigma)^N-\sigma^N] \, dx   +\frac{\mu}{N} \int (h+\sigma) \nabla h \cdot \nabla W*[(h+\sigma)^N-\sigma^N] \, dx \\
	&  \qquad + \mu \nu \int  (u-m_c) \cdot \nabla W*[(h+\sigma)^N-\sigma^N]\, dx   - \mu \lambda \int \nabla \Lambda^{\alpha-d}\{(h+\sigma)^N-\sigma^N\} \cdot \nabla W*[(h+\sigma)^N-\sigma^N]\, dx \\
	&  \qquad +\mu  \int   (u-m_c)\cdot \nabla W*[\nabla  \cdot ((h+\sigma)^N u ) ]\, dx\\
	&  \quad =: \sum_{i=1}^5 {\sfJ_i}(t).
\end{aligned}
\eq 
We then estimate each term ${\sfJ_i}, i=1,\dots,5$ as follows.

For $\sfJ_2$, we use the integration by parts to obtain
\bq\nonumber
\begin{aligned}
\sfJ_2 
&=\frac{\mu}{N} \int (h+\sigma) \nabla h \cdot \nabla W*[(h+\sigma)^N-\sigma^N] \, dx \\
&=\frac{\mu}{2N} \int \{(h+\sigma)^{2}-\sigma^{2}\}  \lt( (h+\sigma)^N-\sigma^N \rt) dx \\	
& \ge c_1\mu  \int \lt|(h+\sigma)^N-\sigma^N \rt|^2   dx,
\end{aligned}
\eq
where $c_1=c_1(\gamma, \|h\|_{L^\infty}, h_{min})$ is a positive constant.

We next use the lower bound assumption on $h +\sigma$ to estimate 
\[
\begin{aligned}
\sfJ_3 &= \nu \mu \int  (u-m_c) \cdot \nabla W*[(h+\sigma)^N-\sigma^N]\, dx\\
&\ge -c_2\nu \mu  \|(h+\sigma)^{\frac{N}{2}}(u-m_c)\|_{L^2} \|\nabla W* [(h+\sigma)^N-\sigma^N]\|_{L^2}\\
& \ge -c_2 \nu \mu  \|(h+\sigma)^{\frac{N}{2}}(u-m_c)\|_{L^2} \|(h+\sigma)^N-\sigma^N\|_{L^2}\\
& \ge -c_2 \nu \mu^{\frac{1}{2}} \|(h+\sigma)^{\frac{N}{2}}(u-m_c)\|_{L^2}^2 
  -c_2 \nu \mu^{\frac{3}{2}} \|(h+\sigma)^N-\sigma^N\|_{L^2},
\end{aligned}
\]
where $c_2 = c_2(h_{min}, \gamma)$ is a positive constant.

For $\sfJ_4$, similarly as the esitmate of $\sfJ_2$, we find
\[
\begin{aligned}
	\sfJ_4 
	&= -\mu \lambda \int  \Lambda^{\alpha-d}\{(h+\sigma)^N-\sigma^N\}\, [(h+\sigma)^N-\sigma^N]\, dx \\
	&\ge - \mu \lambda  \| \Lambda^{\alpha-d} (h+\sigma)^N-\sigma^N \|_{L^2} \| (h+\sigma)^N-\sigma^N \|_{L^2}\\
	& \ge -  \mu\lambda   \| (h+\sigma)^N-\sigma^N\|_{L^2}^2.  
\end{aligned}
\]

Finally, we observe
\bq\nonumber
\begin{aligned}
	\sfJ_5 &= \mu \int (u-m_c) \cdot  \nabla W* [\nabla  \cdot ((h+\sigma)^N (u-m_c))] \,dx\\
	&\quad +\mu \int  (u-m_c) \cdot  \nabla W* [\nabla  \cdot ([(h+\sigma)^N-\sigma^N] m_c)] \,dx
\end{aligned}
\eq  
and
\bq\nonumber
\begin{aligned}
	\int  (u-m_c) \cdot  \nabla W* [\nabla  \cdot ([(h+\sigma)^N-\sigma^N] m_c)]\, dx=\int m_c \otimes (u-m_c)  : \nabla^2 W*[(h+\sigma)^N-\sigma^N]\, dx
\end{aligned}.
\eq
Thus, we have
\[
\begin{aligned}
	\sfJ_1+\sfJ_5&=
	-\mu \int  (u-m_c)  \otimes (u-m_c) : \nabla^2 W*[(h+\sigma)^N-\sigma^N] \, dx \\
	& \quad + \mu \int(u-m_c) \cdot  \nabla W* [\nabla  \cdot ((h+\sigma)^N (u-m_c))] \,dx\\
	& \ge - \mu h_{min}^{-\frac{N}{2}}\| (h+\sigma)^\frac{N}{2} (u-m_c) \|_{L^2}\| (u-m_c) \|_{L^\infty} \| \nabla^2 W*[(h+\sigma)^N-\sigma^N]\|_{L^2}\\
	& \quad-\mu h_{min}^{-\frac{N}{2}} \|(h+\sigma)^{\frac{N}{2}}(u-m_c)\|_{L^2} \| \nabla W* [\nabla \cdot ((h+\sigma)^N (u-m_c))]\|_{L^{2}}\\
	&\ge -C \mu^{1/2}(1+\mu^{1/2})  \|(h+\sigma)^{\frac{N}{2}}(u-m_c)\|_{L^2}^2 - c_3\mu^{3/2}\|(h+\sigma)^N - \sigma^N\|_{L^2}^2,
\end{aligned}
\]
where we used Young's inequality and $c_3 = c_3(\gamma, m_c(0), \|u\|_{L^\infty}, h_{min})$ is a positive constant.

Hence, we collect all the estimates for $\sfJ_i$'s to deduce that 
\bq\nonumber
\begin{aligned}
	\mathcal{D}^{\mu}(t) 
\ge &\lt( 2\nu - C\mu^{\frac{1}{2}}( \nu+ \mu^{\frac{1}{2}}  ) \rt) \|(h+\sigma)^\frac{N}{2}  (u-m_c) \|_{L^2}^2 \\
&+ \mu \lt( c_1 -(c_2 \nu+c_3) \mu^{\frac{1}{2}} - \lambda \rt) \| (h+\sigma)^N-\sigma^N\|_{L^2}^2  +\nu  |m_c|^2.
\end{aligned}
\eq
We now take $\mu, \lambda>0$ sufficiently small so that we find a positive constant $C$ such that
\[
	\mathcal{D}^{\mu}(t) \ge C \mathcal{L}(t),
\]
which implies the desired result.
\end{proof}

\begin{remark}
Here, note that the condition $u \in L^\infty(\T^d \times \R_+)$ can be replaced by $ \nabla h\in L^\infty(\T^d \times \R_+)$. Indeed, we can estimate $\sfJ_1+\sfJ_5$ from above as
\[\begin{aligned}
	\sfJ_1+\sfJ_5&=
	-\mu \int  (u-m_c)  \otimes (u-m_c) : \nabla^2 W*[(h+\sigma)^N-\sigma^N] \, dx \\
	& \quad + \mu \int(u-m_c) \cdot  \nabla W* [\nabla  \cdot ((h+\sigma)^N (u-m_c))] \,dx\\
	& \ge - \mu h_{min}^{-N}\| (h+\sigma)^\frac{N}{2} (u-m_c) \|_{L^2}^2 \| \nabla^2 W*[(h+\sigma)^N-\sigma^N]\|_{L^{\infty}}\\
	& \quad-\mu h_{min}^{-\frac{N}{2}} \|(h+\sigma)^{\frac{N}{2}}(u-m_c)\|_{L^2} \| \nabla W*[ \nabla \cdot ((h+\sigma)^N (u-m_c))]\|_{L^{2}}\\
	& \ge - \mu h_{min}^{-N} 	\|\nabla W\|_{L^1}\| \nabla h \|_{L^{\infty}} \| (h+\sigma)^\frac{N}{2} (u-m_c) \|_{L^2}^2
 -\mu h_{min}^{-\frac{N}{2}}(\|h\|_{L^{\infty}}+\sigma)^{\frac{N}{2}} \|(h+\sigma)^{\frac{N}{2}}(u-m_c)\|_{L^2}^2\\
	&\ge -c_4 \mu  \|(h+\sigma)^{\frac{N}{2}}(u-m_c)\|_{L^2}^2,
\end{aligned}
\]
where $c_4 = c_4(\gamma, \|h\|_{W^{1,\infty}}, h_{min})$ is a positive constant.

\end{remark}

\section*{Acknowlegments}
The work of Y.-P. Choi and Y. Lee are supported by NRF grant no. 2022R1A2C1002820. The work of J. Jung is supported by NRF grant (No. RS-2022-00165600). The work of Y. Lee is supported by NRF grant (No. NRF-2022R1I1A1A01068481).

%
%
%
%
%
%
%
%
%
\appendix
%
%
%
%
%
%
%
%
%
\section{Proof of Lemma \ref{higher_it}}\label{app.A}
In this appendix, we provide the details of the proof of Lemma \ref{higher_it}.

Let $0<l<1/2$. For any $k\ge 0$, we obtain
\bq
\begin{aligned}
	&\frac{1}{2} \frac{d}{dt}\int (h+\sigma)^{k(N-2)} |\Lambda^{m-kl} u|^2 \, dx 
	+  \nu \int (h+\sigma)^{k(N-2)} |\Lambda^{m-kl} u|^2 \, dx
	\nonumber\\
	&\quad = \frac{k(N-2)}{2}\int (h+\sigma)^{k(N-2)-1} \pa_t h\, |\Lambda^{m-kl} u |^2\, dx  
	-\int (h+\sigma)^{k(N-2)} \Lambda^{m-kl} u \cdot \Lambda^{m-kl} (u\cdot \nabla u)\, dx 	\\		
	& \qquad-\frac{1}{N}\int (h+\sigma)^{k(N-2)}\Lambda^{m-kl} u \cdot \Lambda^{m-kl} \{ (h+\sigma) \nabla h\}\, dx	\\
	&\qquad + \lambda \sigma^{-N} \int (h+\sigma)^{k(N-2)} \Lambda^{m-kl}u\cdot \Lambda^{m-(k+2)l}\nabla \{(h+\sigma)^N -\sigma^N\} \, dx\\
	&\quad =: \sfI_1 + \sfI_2   - \mathcal{P}_{m}(k) +\mathcal{R}_m(k),
\end{aligned}
\eq 
where $\sfI_1$ can be estimated as 
\bq\nonumber
\begin{aligned}
	\sfI_1  
	&\leq C \|\pa_t h\|_{L^{\infty}}  \|\Lambda^{m-kl} u\|_{L^2}^2 \\
	&\leq C \|u\|_{L^{\infty}} \| \nabla h\|_{L^{\infty}} \|\Lambda^{m-kl} u\|_{L^2}^2
	+ C \|\nabla u \|_{L^{\infty}} \|\Lambda^{m-kl} u\|_{L^2}^2\\
	&\leq C \|u\|_{L^{\infty}} \| \nabla h\|_{L^{\infty}} \|\Lambda^{m} u\|_{L^2}^2
	+ C \|\nabla u \|_{L^{\infty}} \|\Lambda^{m} u\|_{L^2}^2
\end{aligned}
\eq 
for some $C=C(\gamma, k) > 0$.

For $\sfI_2$, we rewrite it as
\bq\label{J2}
\sfI_2 =
-\int (h+\sigma)^{k(N-2)} \Lambda^{m-kl} u \cdot ([\Lambda^{m-kl}, u\cdot] \, \nabla u)\, dx 
-\frac{1}{2}\int (h+\sigma)^{k(N-2)}  u\cdot \nabla (|\Lambda^{m-kl} u|^2)\, dx. 
\eq
We then use Lemma \ref{lem_moser} to yield
\[
\begin{aligned}
	-\int (h+\sigma)^{k(N-2)} \Lambda^{m-kl} u \cdot ([\Lambda^{m-kl}, u\cdot] \, \nabla u)\, dx 
	&\leq C \| \Lambda^{m-kl} u \|_{L^2} \| [\Lambda^{m-kl}, u\cdot] \,\nabla u\|_{L^2}\\ 
	& \leq C  \| \Lambda^{m-kl} u \|_{L^2} \|u\|_{H^{\frac{d}{2}+1+\varepsilon}}( \| \Lambda^{m-kl} u \|_{L^2} +\| \Lambda^m u \|_{L^2})\\
	& \leq C \|u\|_{H^m} \| \Lambda^{m} u \|_{L^2}^2,  
\end{aligned}
\]
where $\e>0$ is a sufficiently small constant and $kl<m$. For the second term on the right-hand side of \eqref{J2}, we integrate by part to get
\[
\begin{aligned}
	&-\frac{1}{2}\int (h+\sigma)^{k(N-2)} u\cdot \nabla (|\Lambda^{m-kl} u|^2 )\, dx \\
	&\quad =\frac{k(N-2)}{2}\int (h+\sigma)^{k(N-2)-1} \nabla h \cdot u\, |\Lambda^{m-kl} u|^2\, dx +\frac{1}{2}\int (h+\sigma)^{k(N-2)}  (\nabla \cdot u) |\Lambda^{m-kl} u|^2\, dx\\
	&\quad \leq C\|\nabla h\|_{L^{\infty}} \|u\|_{L^{\infty}} \|\Lambda^{m-kl} u\|_{L^2}^2 
	+C \|\nabla \cdot u\|_{L^{\infty}} \|\Lambda^{m-kl} u \|_{L^2}^2\\
	&\quad \leq C\|\nabla h\|_{L^{\infty}} \|u\|_{L^{\infty}}\|\Lambda^{m} u\|_{L^2}^2 
	+C \|\nabla \cdot u\|_{L^{\infty}} \|\Lambda^{m} u \|_{L^2}^2	
\end{aligned}
\]
for some $C=C(\gamma, k) > 0$. Thus, we deduce
\[
\sfI_2 
\leq  C\|u\|_{H^m}\|\Lambda^{m} u \|_{L^2}^2 + C \| h\|_{H^m} \|u\|_{H^m} \|\Lambda^{m} u\|_{L^2}^2.
\]
Combining the above estimates gives
\bq\nonumber
\begin{aligned}
	&\frac{1}{2} \frac{d}{dt}\int (h+\sigma)^{k(N-2)} |\Lambda^{m-kl} u|^2 \, dx  +\nu \int (h+\sigma)^{k(N-2)} |\Lambda^{m-kl} u|^2 \, dx \\
	&\quad \leq C\|u\|_{H^m}\|\Lambda^{m} u \|_{L^2}^2 +C \| h\|_{H^m} \|u\|_{H^m} \|\Lambda^{m} u\|_{L^2}^2 -\mathcal{P}_{m}(k) +\mathcal{R}_m (k), 
\end{aligned}
\eq 
where $C = C(\gamma, m, k, l) > 0$. This together with using the fact $h+\sigma>0$ concludes the desired estimate \eqref{estimate_Pk}.

%
%
%
%
%
%
%
%
%

%
%
%
%

\end{document}